\newtheorem{theorem}{Theorem}[section]
\newtheorem{lemma}[theorem]{Lemma}
\theoremstyle{definition}
\newtheorem*{definition}{Definition}
\theoremstyle{plain}
\newenvironment{maintheorem}[1]
  {\innerthm}
  {\endinnerthm}
\numberwithin{equation}{section} 
\DeclareMathOperator{\Det}{\textnormal{Det}}
\definecolor{grey}{RGB}{210,210,210}
\definecolor{gammacol}{RGB}{48,65,93}
\definecolor{deltacol}{RGB}{207,103,102}
\definecolor{alphacol}{RGB}{3,20,36} 
\renewcommand{\geq}{\geqslant}
\renewcommand{\leq}{\leqslant}
\tikzstyle arrowstyle=[scale=1]
\tikzstyle directed=[postaction={decorate,decoration={markings,
		mark=at position 0.5 with {\arrow[arrowstyle]{{latex}}}}}]
\tikzstyle reverse directed=[postaction={decorate,decoration={markings,
		mark=at position 0.5 with {\arrow[arrowstyle]{{latex reversed}}}}}]
\tikzset{pointer/.style 2 args={draw,fill,single arrow,
    single arrow tip angle=45,
    scale=#1,
    single arrow head indent=0pt,
    inner sep=0pt,
    rotate=#2}}
\setlist[enumerate]{leftmargin=20pt,itemsep=0pt,topsep=0pt}
\setlist[enumerate,1]{label=\emph{(\roman*)}}
\setlist[itemize]{leftmargin=20pt,itemsep=0pt,topsep=0pt}
\renewcommand\section{\@startsection {section}{1}{\z@}%
                                   {-3.5ex \@plus -1ex \@minus -.2ex}%
                                   {1.3ex \@plus.2ex}%
                                   {\normalfont\bf\large}}
\renewcommand\subsection{\@startsection {subsection}{1}{\z@}%
                                   {-3.5ex \@plus -1ex \@minus -.2ex}%
                                   {0.1ex \@plus.2ex}%
                                   {\normalfont\bf\normalsize}}
\newcommand{\discfarey}{
   \draw[gray,ultra thin] (0,-1) -- (0,1); 
 
   \foreach \thetaone/\thetatwo/\startangle/\finishangle/\R in 
{-90/0/-180/-270/1,-90.0/0.0/-180.0/-270.0/1.0,-90.0/-36.87/-180.0/-306.87/0.5,-90.0/-53.13/-180.0/-323.13/0.333,-90.0/-61.928/-180.0/-331.928/0.25,-90.0/-67.38/-180.0/-337.38/0.2,-90.0/-71.075/-180.0/-341.075/0.167,-90.0/-73.74/-180.0/-343.74/0.143,-90.0/-75.75/-180.0/-345.75/0.125,-90.0/-77.32/-180.0/-347.32/0.111,-90.0/-78.579/-180.0/-348.579/0.1,-90.0/-79.611/-180.0/-349.611/0.091,-90.0/-80.473/-180.0/-350.473/0.083,-90.0/-81.203/-180.0/-351.203/0.077,-90.0/-81.829/-180.0/-351.829/0.071,-90.0/-82.372/-180.0/-352.372/0.067,-90.0/-82.847/-180.0/-352.847/0.063,-90.0/-83.267/-180.0/-353.267/0.059,-90.0/-83.64/-180.0/-353.64/0.056,-90.0/-83.974/-180.0/-353.974/0.053,-90.0/-84.275/-180.0/-354.275/0.05,-84.275/-83.974/-174.275/-353.974/0.003,-83.974/-83.64/-173.974/-353.64/0.003,-83.64/-83.267/-173.64/-353.267/0.003,-83.267/-82.847/-173.267/-352.847/0.004,-82.847/-82.372/-172.847/-352.372/0.004,-82.372/-81.829/-172.372/-351.829/0.005,-81.829/-81.203/-171.829/-351.203/0.005,-81.203/-80.473/-171.203/-350.473/0.006,-80.473/-79.611/-170.473/-349.611/0.008,-79.611/-78.579/-169.611/-348.579/0.009,-78.579/-77.32/-168.579/-347.32/0.011,-77.32/-75.75/-167.32/-345.75/0.014,-75.75/-73.74/-165.75/-343.74/0.018,-73.74/-71.075/-163.74/-341.075/0.023,-71.075/-67.38/-161.075/-337.38/0.032,-67.38/-61.928/-157.38/-331.928/0.048,-61.928/-53.13/-151.928/-323.13/0.077,-61.928/-58.109/-151.928/-328.109/0.033,-58.109/-53.13/-148.109/-323.13/0.043,-53.13/-36.87/-143.13/-306.87/0.143,-53.13/-46.397/-143.13/-316.397/0.059,-53.13/-48.888/-143.13/-318.888/0.037,-48.888/-46.397/-138.888/-316.397/0.022,-46.397/-36.87/-136.397/-306.87/0.083,-46.397/-43.603/-136.397/-313.603/0.024,-43.603/-36.87/-133.603/-306.87/0.059,-43.603/-42.075/-133.603/-312.075/0.013,-42.075/-36.87/-132.075/-306.87/0.045,-42.075/-41.112/-132.075/-311.112/0.008,-41.112/-36.87/-131.112/-306.87/0.037,-36.87/0.0/-126.87/-270.0/0.333,-36.87/-22.62/-126.87/-292.62/0.125,-36.87/-28.072/-126.87/-298.072/0.077,-36.87/-30.51/-126.87/-300.51/0.056,-36.87/-31.891/-126.87/-301.891/0.043,-36.87/-32.779/-126.87/-302.779/0.036,-32.779/-31.891/-122.779/-301.891/0.008,-31.891/-30.51/-121.891/-300.51/0.012,-30.51/-28.072/-120.51/-298.072/0.021,-28.072/-22.62/-118.072/-292.62/0.048,-28.072/-25.989/-118.072/-295.989/0.018,-25.989/-22.62/-115.989/-292.62/0.029,-22.62/0.0/-112.62/-270.0/0.2,-22.62/-16.26/-112.62/-286.26/0.056,-22.62/-18.925/-112.62/-288.925/0.032,-18.925/-16.26/-108.925/-286.26/0.023,-16.26/0.0/-106.26/-270.0/0.143,-16.26/-12.68/-106.26/-282.68/0.031,-16.26/-14.25/-106.26/-284.25/0.018,-14.25/-12.68/-104.25/-282.68/0.014,-12.68/0.0/-102.68/-270.0/0.111,-12.68/-10.389/-102.68/-280.389/0.02,-10.389/0.0/-100.389/-270.0/0.091,-10.389/-8.797/-100.389/-278.797/0.014,-8.797/0.0/-98.797/-270.0/0.077,-8.797/-7.628/-98.797/-277.628/0.01,-7.628/0.0/-97.628/-270.0/0.067,-7.628/-6.733/-97.628/-276.733/0.008,-6.733/0.0/-96.733/-270.0/0.059,-6.733/-6.026/-96.733/-276.026/0.006,-6.026/0.0/-96.026/-270.0/0.053,-6.026/-5.453/-96.026/-275.453/0.005,-5.453/0.0/-95.453/-270.0/0.048,-5.453/-4.979/-95.453/-274.979/0.004,-4.979/0.0/-94.979/-270.0/0.043,-4.979/-4.581/-94.979/-274.581/0.003,-4.581/0.0/-94.581/-270.0/0.04,-4.581/-4.242/-94.581/-274.242/0.003,-4.242/0.0/-94.242/-270.0/0.037,-4.242/-3.95/-94.242/-273.95/0.003,-3.95/0.0/-93.95/-270.0/0.034,-3.95/-3.695/-93.95/-273.695/0.002,-3.695/0.0/-93.695/-270.0/0.032,-3.695/-3.471/-93.695/-273.471/0.002,-3.471/0.0/-93.471/-270.0/0.03,-3.471/-3.273/-93.471/-273.273/0.002,-3.273/0.0/-93.273/-270.0/0.029,-3.273/-3.096/-93.273/-273.096/0.002,-3.096/0.0/-93.096/-270.0/0.027,-3.096/-2.938/-93.096/-272.938/0.001,-2.938/0.0/-92.938/-270.0/0.026,-2.938/-2.794/-92.938/-272.794/0.001,-2.794/0.0/-92.794/-270.0/0.024}   
    {
      \tikzmath
      {
      	    \costheta = cos(\thetaone);
      	    \sintheta = sin(\thetaone);
	} 
	\draw[gray,ultra thin](\costheta,\sintheta) arc (\startangle:\finishangle:\R); 
     	\draw[gray,ultra thin](-\costheta,\sintheta) arc (180-\startangle:180-\finishangle:\R); 
	\draw[gray,ultra thin](\costheta,-\sintheta) arc (-\startangle:-\finishangle:\R); 
	\draw[gray,ultra thin](-\costheta,-\sintheta) arc (\startangle+180:\finishangle+180:\R); 
   }
   \draw (0,0) circle (1); 
}
\def\dischorocycle[#1](#2:#3)
\def\hgline[#1](#2:#3:#4:#5)
\def\shline[#1](#2:#3:#4:#5)
\def\fareylabel(#1:#2)
\title{ \vspace{-6ex}\bf \large Classifying integer tilings and hypertilings\footnotetext{%
		\hspace{-7pt}2020 Mathematics Subject Classification: Primary 05E16, 15B36; Secondary 11B57.
		
		Key words: Bhargava cube, frieze, Farey graph, hyperdeterminant, integer hypertiling, integer tiling, $\text{SL}_2$-tiling. 
				
		There is no data associated with this article.}}
\author{\normalsize Oleg Karpenkov, Ian Short, Matty van Son, and Andrei Zabolotskii}
\date{\vspace{-3ex}}
\begin{document}

\maketitle

\begin{abstract}
There are two objectives to this work: to classify all tame integer tilings and to classify all tame integer hypertilings. Motivation for the first objective comes from Conway and Coxeter's modelling of positive integer friezes using triangulated polygons, which has received significant attention since the discovery of cluster algebras by Fomin and Zelevinsky in 2002. Assem, Reutenauer, and Smith introduced $\text{SL}_2$-tilings as generalisations of friezes, and Bessenrodt, Holm, and J\o rgensen classified positive integer $\text{SL}_2$-tilings using infinite triangulated polygons. Here we consider $N$-tilings, of which $\text{SL}_2$-tilings are the case $N=1$. We provide a geometric model for all tame integer $N$-tilings using a generalisation of the Farey graph in the hyperbolic plane. Highlights of this model include classifications of all positive integer $N$-tilings and of all positive rational friezes, with entries encoded by lambda lengths or weight data of triangulated polygons.

The second objective is motivated by Bhargava's celebrated study of binary quadratic forms using integer cubes and by an observation of Demonet et al.\ that there is essentially only one three-dimensional positive integer tiling with $\text{SL}_2$ cross sections. We consider a richer class of three-dimensional tilings, which we call hypertilings, using the Cayley hyperdeterminant. We classify all tame integer hypertilings using generalised Farey graphs; remarkably, those with Cayley hyperdeterminant 1 prove to have a  simple description in terms of triple Hadamard products of integer pairs.
\end{abstract}

\section{Introduction}\label{sec1}

This paper has a twofold objective: to classify all tame integer tilings and to classify all tame integer hypertilings. The provenance for such classifications is Conway and Coxeter's modelling of positive integer friezes using triangulated polygons \cite{CoCo1973}. Models of this type have blossomed since the discovery of cluster algebras by Fomin and Zelevinsky in the 2000s \cites{FoZe2002} and the work of Caldero and Chapoton \cite{CaCh2006}, who showed that Conway and Coxeter's friezes arise from cluster algebras of Dynkin type $A$. Later, Assem, Reutenauer, and Smith introduced $\text{SL}_2$-tilings \cite{AsReSm2010}, generalising friezes. 

The positivity conjecture of Fomin and Zelevinsky, which was verified for all skew-symmetric cluster algebras by Lee and Schiffler \cite{LeSc2015}, led to interest in positive integer $\text{SL}_2$-tilings and infinite friezes. Bessenrodt, Holm, and J\o rgensen offered a combinatorial classification of the former  \cite{BeHoJo2017} and Baur, Parsons, and Tschabold provided the same sort of classification for the latter \cite{BaPaTs2016}. The second author reinterpreted these classifications using the geometry of the Farey graph in \cite{Sh2023}, building on the vision of Morier-Genoud, Ovsienko, and Tabachnikov \cite{MoOvTa2015}. Then, in \cite{FeKaSeTu2023}, Felikson et al.\ took the same approach for a three-dimensional Farey graph, using Eisenstein integers, and the latter three authors of this paper extended this to more general rings in \cite{ShVaZa2025}.

Inspiration for this work comes from the elegant result of Demonet et al.\ \cite{DePlRuTu2018} that there is (essentially) a \emph{unique} three-dimensional tiling of positive integers with the property that every cross section is an $\text{SL}_2$-tiling. Here we consider broader classes of three-dimensional integer tilings in which every 2-by-2-by-2 block is required to have Cayley hyperdeterminant 1 or, more generally, $N$. Such cubes of integers are known as \emph{Bhargava cubes} following the pioneering work of Bhargava \cite{Bh2004} who interpreted Gauss composition of binary quadratic forms using these cubes. We combine Bhargava's approach with a more general concept of Farey graphs to model three-dimensional integer tilings, which we call \emph{hypertilings}. The cross sections of hypertilings are \emph{not} required to be $\text{SL}_2$-tilings -- they can be $N$-tilings, where each 2-by-2 square has determinant $N$ (and $N$ depends on the cross section). For this reason, we begin with a comprehensive study of $N$-tilings, which advances all previous classifications of integer $\text{SL}_2$-tilings. An appealing consequence of this is a complete description of all positive \emph{rational} friezes using triangulated polygons in which frieze entries can be specified geometrically by lambda lengths or encoded arithmetically by weight data of triangulated polygons. 

We will now introduce our main results, first for integer tilings and then for integer hypertilings. Throughout, $\mathcal{I}$, $\mathcal{J}$, and $\mathcal{K}$ are index sets of consecutive integers of length at least 3, possibly finite, possibly infinite, each of which contains $0$ and $1$. We write $\mathcal{I}'$ for the set obtained from $\mathcal{I}$ by removing the least element from $\mathcal{I}$, if it has one, and we write $\mathcal{I}^*$ for the set obtained from $\mathcal{I}'$ by removing the greatest element from $\mathcal{I}'$, if it has one.

\subsection{Integer tilings}

In this work, integer tilings are considered to be types of (possibly infinite) integer matrices. They are unrelated to tilings from the theory of tessellations. We define a \emph{subblock} of a matrix to be a block submatrix; that is, a submatrix with contiguous entries.

\begin{definition}
For any integer $N$, an \emph{$N$-tiling} is a function $\mathbf{M}\colon \mathcal{I}\times \mathcal{J}\longrightarrow \mathbb{Z}$, for some index sets $\mathcal{I}$ and $\mathcal{J}$, with the property that every 2-by-2 subblock has determinant $N$. An \emph{integer tiling} is an $N$-tiling, for some value of $N$. We write $m_{ij}$ for  $\mathbf{M}(i,j)$.
\end{definition}

In Section~\ref{section zero} we will consider 0-tilings. For now, we assume that $N$ is a nonzero integer. A 1-tiling is what is usually called an $\text{SL}_2$-tiling. Clearly, we can create an $N$-tiling from a 1-tiling by multiplying every other row, or every other column, by $N$. There are plenty of $N$-tilings that do not arise in this fashion.

\begin{definition}
An $N$-tiling is \emph{tame} if the determinant of each 3-by-3 subblock is 0. 
\end{definition}

This definition resembles that for tame $\text{SL}_2$-tilings. To get to grips with $N$-tilings, we need further data (which is trivial for $\text{SL}_2$-tilings) as follows.  

Consider a tame $N$-tiling $\mathbf{M}$. The \emph{greatest common divisor} $K$ of $\mathbf{M}$ is the greatest common divisor of all the entries from $\mathbf{M}$. Clearly $K=1$ for an $\text{SL}_2$-tiling. Now, suppose for the moment that $N$ is positive. Let $t$ be the greatest common divisor of all the 2-by-2 minors of $\mathbf{M}$, the Pl\"ucker coordinates of $\mathbf{M}$. Let $r$ be the greatest common divisor of the 2-by-2 minors from any two consecutive columns of $\mathbf{M}$, and let $R=N/r$. Likewise, let $s$ be the greatest common divisor of the 2-by-2 minors from any two consecutive rows of $\mathbf{M}$, and let $S=N/s$. We will see later that $R$ and $S$ are independent of which columns and rows are chosen. We will also see that $RS=N/t$. Notice that $t$ is divisible by $K^2$. Hence $N=K^2LRS$, where $L=t/K^2$.

When $N$ is negative, we replace $r$, $s$, and $t$ with their negatives and then define $R$, $S$, and $L$ as before. Consequently, $R$ and $S$ are (still) positive but now $L$ is negative.

\begin{definition}
For a tame $N$-tiling $\mathbf{M}$, the integers $K$, $L$, $R$, and $S$ are called the \emph{tameness parameters} of $\mathbf{M}$. We will usually write these as a 4-tuple $(K,L,R,S)$. We call $R$ the \emph{vertical tameness parameter} of $\mathbf{M}$ and $S$ the \emph{horizontal tameness parameter} of $\mathbf{M}$.
\end{definition}

We illustrate this definition with the tame 9-tiling $\mathbf{M}$ with greatest common divisor 1 shown in Figure~\ref{figure1}(a). The greatest common divisor of the minors from the two shaded vertical rows is 9, so the vertical tameness parameter is $R=9/9=1$.  Similarly, the horizontal tameness parameter is $S=9/3=3$, so $L=3$. In summary, the tameness parameters of $\mathbf{M}$ are $(1,3,1,3)$.

\begin{figure}[ht]
\hspace*{30pt}\begin{subfigure}[b]{0.5\textwidth}
	\begin{tikzpicture}
		\fill[rounded corners=2pt,fill=gammacol,fill opacity=0.3,xshift=-53pt] (0,-2.7) rectangle (1.4,2.7);
	\fill[rounded corners=2pt,fill=alphacol,fill opacity=0.3,yshift=-9pt] (-2.7,0) rectangle (2.7,1.4);
	\node at (0,0) {
		\(
    \mleft(\vcenter{
\xymatrix @-0.2pc @!0 {
67 & 144 & 29 & 30 & 11 & 36 & 13 \\ 46 & 99 & 20 & 21 & 8 & 27 & 10 \\ 25 & 54 & 11 & 12 & 5 & 18 & 7 \\ 4 & 9 & 2 & 3 & 2 & 9 & 4 \\ 7 & 18 & 5 & 12 & 11 & 54 & 25 \\ 10 & 27 & 8 & 21 & 20 & 99 & 46 \\ 13 & 36 & 11 & 30 & 29 & 144 & 67}}\mright)
\)
	};
	\end{tikzpicture}	
\caption{}	
\end{subfigure}	
\begin{subfigure}[b]{0.5\textwidth}
	\begin{tikzpicture}
	\node at (0,0) {
		\(
  \mleft(\vcenter{
  \xymatrix @-0.2pc @!0 {
\tfrac{67}{3} & 48 & \tfrac{29}{3} & 10 & \tfrac{11}{3} & 12 & \tfrac{13}{3} \\ \tfrac{46}{3} & 33 & \tfrac{20}{3} & 7 & \tfrac{8}{3} & 9 & \tfrac{10}{3} \\ \tfrac{25}{3} & 18 & \tfrac{11}{3} & 4 & \tfrac{5}{3} & 6 & \tfrac{7}{3} \\ \tfrac{4}{3} & 3 & \tfrac{2}{3} & 1 & \tfrac{2}{3} & 3 & \tfrac{4}{3} \\ \tfrac{7}{3} & 6 & \tfrac{5}{3} & 4 & \tfrac{11}{3} & 18 & \tfrac{25}{3} \\ \tfrac{10}{3} & 9 & \tfrac{8}{3} & 7 & \tfrac{20}{3} & 33 & \tfrac{46}{3} \\ \tfrac{13}{3} & 12 & \tfrac{11}{3} & 10 & \tfrac{29}{3} & 48 & \tfrac{67}{3}}}\mright)
 \)
	};
	\end{tikzpicture}	
\caption{}		
\end{subfigure}
\caption{A tame 9-tiling over $\mathbb{Z}$ (left) and the corresponding tame $\text{SL}_2$-tiling over $\frac13\mathbb{Z}$ (right)}
\label{figure1}
\end{figure}
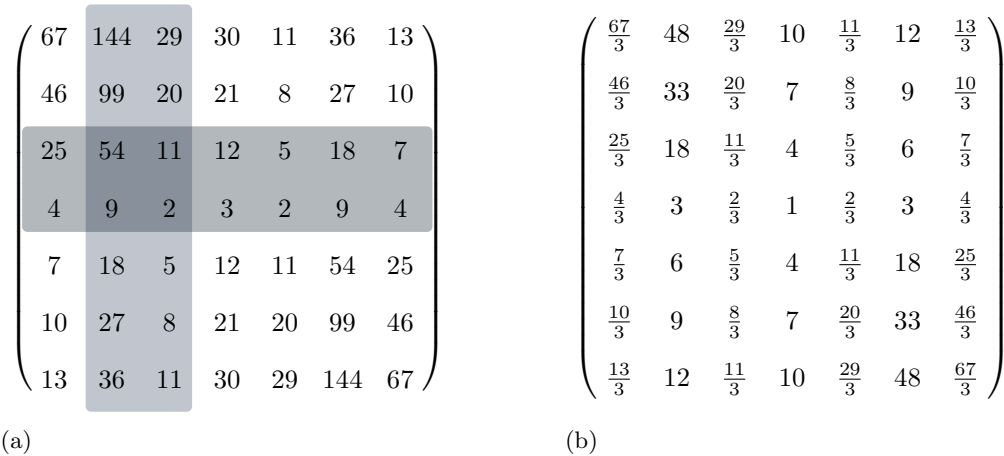

Figure~\ref{figure1}(b) is the tame $\text{SL}_2$-tiling over $\frac13\mathbb{Z}$ obtained from the tame 9-tiling of Figure~\ref{figure1}(a) by dividing each entry of that 9-tiling by 3. Through similar means we can obtain a one-to-one correspondence between tame $N^2$-tilings over $\mathbb{Z}$ and tame $\text{SL}_2$-tilings over $\frac{1}{N}\mathbb{Z}$.

To model integer tilings we will use a generalisation of the classical Farey graph, shown in Figure~\ref{figure2}. This is the graph with vertices comprising reduced rationals and a point at infinity and with an edge between two reduced rationals $a/b$ and $c/d$ if $ad-bc=\pm1$. The edges are represented by hyperbolic geodesics in the upper half-plane model of two-dimensional hyperbolic space. Sometimes we will switch to the Poincar\'e disc model of hyperbolic space instead.

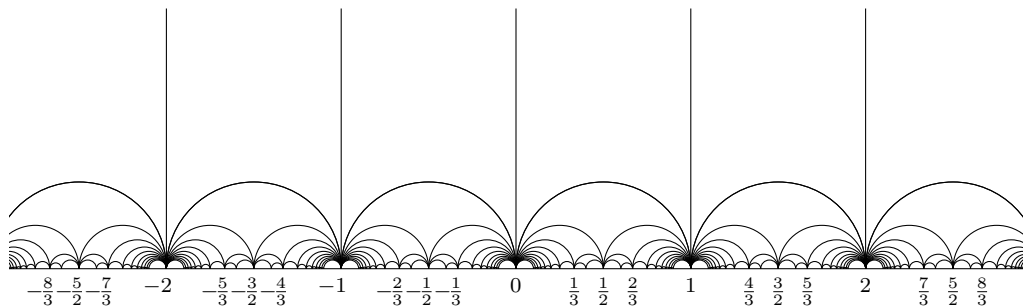
\begin{figure}[ht]
\centering
\begin{tikzpicture}[scale=2.3]
   \clip (-2.9,-0.3) rectangle (2.9,1.5);
   	
   \draw (-3,0) -- (3,0);	

   \foreach \leftend/\rightend in {0/1,0/0.5,0/0.333,0.333/0.5,0/0.25,0.25/0.333,0.333/0.4,0.4/0.5,0/0.2,0.2/0.25,0/0.166,0.166/0.2,0/0.143,0.143/0.166,0/0.125,0.125/0.143,0/0.111,0.111/0.125,0/0.1,0.1/0.111}{
    	\foreach \n in {-3,-2,-1,0,1,2} {
	    	\draw[] ({\n+\rightend},0) arc (0:180:{0.5*(\rightend-\leftend)}); 
	    	\draw[] ({-\n-\leftend},0) arc (0:180:{0.5*(\rightend-\leftend)});  
	 } 	
   }
   
    \foreach \bottom in {-3,-2,-1,0,1,2,3} {
    	\draw (\bottom,0) -- (\bottom,1.5);
    }
   
\foreach \pos/\label in {-2/$-2\phantom{-}$,-1/$-1\phantom{-}$,0/$0$,1/$1$,2/$2$,
-2.5/$-\!\tfrac52\phantom{-}$,-1.5/$-\!\tfrac32\phantom{-}$,-0.5/$-\!\tfrac12\phantom{-}$,0.5/$\tfrac12$,1.5/$\tfrac32$,2.5/$\tfrac52$,-2.667/$-\!\tfrac83\phantom{-}$,-2.333/$-\!\tfrac73\phantom{-}$,-1.667/$-\!\tfrac53\phantom{-}$,-1.333/$-\!\tfrac43\phantom{-}$,-0.667/$-\!\tfrac23\phantom{-}$,-0.333/$-\!\tfrac13\phantom{-}$,0.333/$\tfrac13$,0.667/$\tfrac23$,1.333/$\tfrac43$,1.667/$\tfrac53$,2.333/$\tfrac73$,2.667/$\tfrac83$}{
    	\node [below] at (\pos,0) {{\footnotesize \label}}; 	
   }

\end{tikzpicture}
\caption{The classical Farey graph}
\label{figure2}
\end{figure}

More generally, we define a class of Farey graphs indexed by the positive integers as follows.

\begin{definition}
For any positive integer $R$, we denote by $\mathscr{F}_R$ the directed graph with 
\begin{itemize}
\item vertices integer pairs $(a,b)\in\mathbb{Z}\times\mathbb{Z}$ for which $\gcd(a,b)$ is a factor of $R$,
\item a directed edge from $(a,b)$ to $(c,d)$ if $ad-bc=R$.
\end{itemize}
\end{definition}

We will often denote the vertex $(a,b)$ by the formal fraction $a/b$ because this fractional notation is suggestive for how we manipulate vertices of Farey graphs. All graphs $\mathscr{F}_R$ are connected. The notation $\mathscr{F}_R$ was used for a different class of graphs in \cite{ShVaZa2025}, where $R$ was a ring rather than an integer.

The directed graph $\mathscr{F}_1$ (which we also denote by just $\mathscr{F}$) is a double cover of the undirected classical Farey graph. More generally, to appreciate the geometry of $\mathscr{F}_R$ it helps to work with the quotient of $\mathscr{F}_R$ obtained by identifying pairs of vertices $a/b$ and $(-a)/(-b)$. A means for visualising this quotient is to use horocycles. To see this, observe that by restricting the usual (left) action of $\text{SL}_2(\mathbb{Z})$ on $\mathbb{Z}\times\mathbb{Z}$ we obtain an action on $\mathscr{F}_R$. Under this action, the orbits of vertices of $\mathscr{F}_R$ correspond to the positive divisors of $R$ (so the action is not transitive unless $R=1$).

For $(a,b)\in\mathbb{Z}\times\mathbb{Z}\setminus\{(0,0)\}$, let $H(a,b)$ denote the horocycle centred at $a/b$ of Euclidean diameter $1/b^2$, for $b\neq 0$ (and $H(a,0)$ is the horocycle centred at $\infty$ of height $a^2$). Evidently $H(-a,-b)=H(a,b)$. The group $\text{SL}_2(\mathbb{Z})$ acts on the collection of horocycles by M\"obius transformations in the usual way, and the map $H$ is equivariant with respect to these two actions of $\text{SL}_2(\mathbb{Z})$, on $\mathbb{Z}\times\mathbb{Z}\setminus\{(0,0)\}$ and on horocycles. There is an eloquent account of these actions in \cite[Section~6]{Sp2017}.

It follows, then, that we can obtain a geometric model of $\mathscr{F}_R$ in the hyperbolic plane that respects the action of $\text{SL}_2(\mathbb{Z})$ by representing the vertex $a/b$ by the horocycle $H(a,b)$. See Figure~\ref{figure3}, for example, which illustrates the Farey graph $\mathscr{F}_2$ (with vertices $a/b$ and $(-a)/(-b)$ identified). There are two classes of horocycles: those with a lighter shade correspond to vertices $a/b$ with $\gcd(a,b)=1$ and those with a darker shade satisfy $\gcd(a,b)=2$.

\begin{figure}[ht]
\centering
\begin{tikzpicture}[scale=2.3]
   \clip (-2.9,-0.1) rectangle (2.9,2.4);
   	
   \draw (-3,0) -- (3,0);	

   \foreach \leftend/\rightend in {0/1,0/0.5,0/0.333,0.333/0.5,0/0.25,0.25/0.333,0.333/0.4,0.4/0.5,0/0.2,0.2/0.25,0/0.166,0.166/0.2,0/0.143,0.143/0.166,0/0.125,0.125/0.143,0/0.111,0.111/0.125,0/0.1,0.1/0.111}{
    	\foreach \n in {-3,-2,-1,0,1,2} {
	    	\draw[grey] ({\n+\rightend},0) arc (0:180:{0.5*(\rightend-\leftend)}); 
	    	\draw[grey] ({-\n-\leftend},0) arc (0:180:{0.5*(\rightend-\leftend)});  
	 } 	
   }
        
    \foreach \bottom in {-3,-2,-1,0,1,2,3} {
    	\draw[grey] (\bottom,0) -- (\bottom,2.5);
    }
   
\foreach \num/\denom/\label in {-3/1/$-\frac31\phantom{-}$,-2/1/$-\frac21\phantom{-}$,-1/1/$-\frac11\phantom{-}$,0/1/$\frac01$,1/1/$\frac11$,2/1/$\frac21$,3/1/$\frac31$,
-5/2/$-\frac52\phantom{-}$,-3/2/$-\frac32\phantom{-}$,-1/2/$-\frac12$,1/2/$\frac12$,3/2/$\frac32$,5/2/$\frac52$,1/3/$\frac13$,2/3/$\frac23$,4/3/$\frac43$,5/3/$\frac53$,7/3/$\frac73$,8/3/$\frac83$,-1/3/$-\frac13\phantom{-}$,-2/3/$-\frac23\phantom{-}$,-4/3/$-\frac43\phantom{-}$,-5/3/$-\frac53\phantom{-}$,-7/3/$-\frac73\phantom{-}$,-8/3/$-\frac83\phantom{-}$}{
   	\tikzmath{\radius = 1/(2*\denom*\denom); } 
   	\draw[fill=alphacol,fill opacity=0.2] ({\num/\denom},\radius) circle (\radius);
   	\ifnum \denom <4 
   		\node[scale=1.8*sqrt(\radius)] at ({\num/\denom},\radius) {\label};
	\fi
   }
   
\foreach \num/\denom/\label in {-6/2/$-\frac62\phantom{-}$,-4/2/$-\frac42\phantom{-}$,-2/2/$-\frac22\phantom{-}$,0/2/$\frac02$,2/2/$\frac22$,4/2/$\frac42$,6/2/$\frac62$,
-10/4/$-\frac{10}{4}\phantom{-}$,-6/4/$-\frac64\phantom{-}$,-2/4/$-\frac24$,2/4/$\frac24$,6/4/$\frac64$,10/4/$\frac{10}{4}$}{
   	\tikzmath{\radius = 1/(2*\denom*\denom); } 
   	\draw[fill=alphacol,fill opacity=0.5] ({\num/\denom},\radius) circle (\radius);
   	\ifnum \denom <4 
	   	\node[scale=1.8*sqrt(\radius)] at ({\num/\denom},\radius) {\label};
	\fi
   }
   
   \draw (-3,1) -- (3,1);
   \fill[alphacol,opacity=0.2] (-3,1) rectangle (3,3);
   \draw (-3,2) -- (3,2);
   \fill[alphacol,opacity=0.5] (-3,2) rectangle (3,3);
   
   \node[scale=1.5] at (0,1.5){$\frac10$};
   \node[scale=1.5] at (0,2.2){$\frac20$};	
\end{tikzpicture}
\caption{The Farey graph $\mathscr{F}_2$}
\label{figure3}
\end{figure}
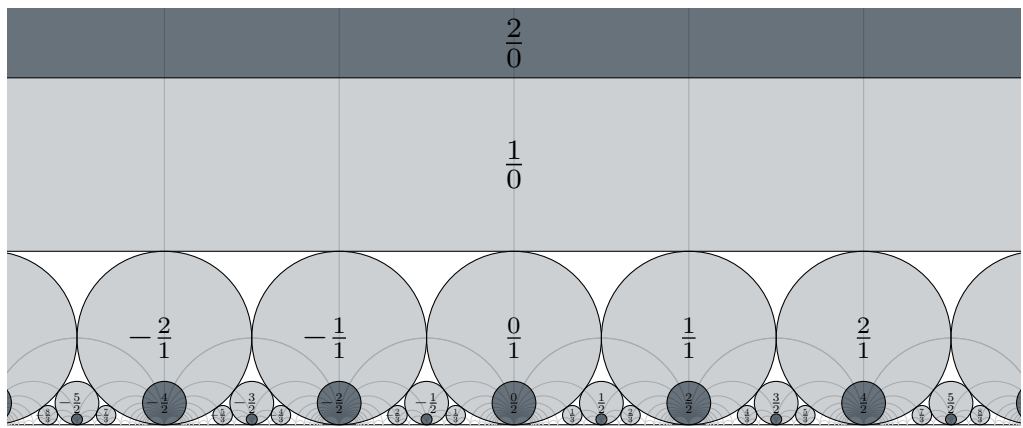

Edges in $\mathscr{F}_R$ can be interpreted either geometrically or combinatorially, as follows. For the geometric interpretation, two horocycles $a/b$ and $c/d$ are adjacent if and only if the signed hyperbolic distance $d$ between them is $2\log R$ (see \cite[Proposition~6.2]{Sp2017}). Following Penner \cite{Pe1987}, this quantity $e^{d/2}$ is called the lambda length between the horocycles. For the combinatorial interpretation, we recall that the \emph{Farey distance} between the reduced rationals $p/q$ and $r/s$ in the \emph{classical} Farey graph is  $\Delta(p/q,r/s)=|ps-qr|$ (see \cite{MoOvTa2015}). Farey distance can be defined on any finite triangulated polygon by embedding that polygon in the Farey tessellation, and it can also be defined combinatorially by the process sometimes called Conway--Coxeter counting (see, for example, \cite{BeHoJo2017}). Now, given two vertices $a/b$ and $c/d$ of $\mathscr{F}_R$ there are positive integer weights $u$ and $v$ with $(a,b)=u(p,q)$ and $(c,d)=v(r,s)$, for reduced rationals $p/q$ and $r/s$. Then, there is an edge in $\mathscr{F}_R$ between $a/b$ and $c/d$ if and only if the \emph{weighted} Farey distance $uv\Delta(p/q,r/s)$ equals $R$. We will use this perspective shortly to classify positive rational friezes.

\begin{definition}
A \emph{path} in $\mathscr{F}_R$ is a sequence of vertices $(a_i/b_i)_{i\in \mathcal{I}}$ in $\mathscr{F}_R$, for some index set $\mathcal{I}$, for which there is a directed edge from $a_{i-1}/b_{i-1}$ to $a_i/b_i$, for each $i\in \mathcal{I}'$.  The path is \emph{minimal} if the greatest common divisor of the integers $a_jb_i-b_ja_i$, for $i,j\in \mathcal{I}$,  is $1$.
\end{definition}

To ease notation, we sometimes write $a_i/b_i$ for the path $(a_i/b_i)_{i\in \mathcal{I}}$. Notice that all paths in $\mathscr{F}_1$ are minimal because $a_jb_i-b_ja_i=1$ whenever $j=i-1$.

Now, let $L$ be a nonzero integer. We define
\[
\Gamma^0(L) = \left\{ \begin{pmatrix}a&bL\\ c& d\end{pmatrix} \in \text{SL}_2(\mathbb{Z}) : a,b,c,d\in\mathbb{Z}\right\}.
\]
This group is sometimes called the \emph{Hecke congruence subgroup} of level $L$. Observe that 
\[
\begin{pmatrix}a&b\\ cL& d\end{pmatrix}=\begin{pmatrix}1/L&0\\ 0& 1\end{pmatrix}\begin{pmatrix}a&bL\\ c& d\end{pmatrix}\begin{pmatrix}L&0\\ 0& 1\end{pmatrix}.
\]
We can define a left action of $\Gamma^0(L)$ on $\mathbb{Z}^2 \times \mathbb{Z}^2$ by
\[
\begin{pmatrix}a&bL\\ c& d\end{pmatrix} \mleft(\begin{pmatrix}x_1\\ x_2\end{pmatrix}, \begin{pmatrix}y_1\\ y_2\end{pmatrix} \mright) = \mleft( \begin{pmatrix}a&bL\\ c& d\end{pmatrix}\begin{pmatrix}x_1\\
x_2\end{pmatrix}, \begin{pmatrix}a&b\\ cL& d\end{pmatrix}\begin{pmatrix}y_1\\ y_2\end{pmatrix} \mright).
\]
This restricts to an action on $\mathscr{F}_R\times \mathscr{F}_S$ which preserves the collection of pairs of minimal paths in $\mathscr{F}_R\times \mathscr{F}_S$. Using this action we obtain our first main result, a one-to-one correspondence between pairs of paths and $N$-tilings.

\begin{maintheorem}{A}\label{theoremA}
Let $K$, $L$, $R$, $S$, $N$ be nonzero integers with $K,R,S>0$ and $N=K^2LRS$. The map
\[
\hspace*{-30pt}\Gamma^0(L)\Big\backslash\mleft\{\parbox{2.1cm}{\centering\textnormal{minimal paths in $\mathscr{F}_R$}}\mright\}\times\mleft\{\parbox{2.1cm}{\centering\textnormal{ minimal paths in
$\mathscr{F}_S$}}\mright\}\quad \longrightarrow\quad \mleft\{\parbox{4.6cm}{\centering\textnormal{tame~$N$-tilings with tameness parameters $(K,L,R,S)$}}\mright\}
\]
determined by 
\[
m_{ij} = K(a_id_j-Lb_ic_j),\qquad\text{$(i,j)\in\mathcal{I}\times\mathcal{J}$},
\]
for minimal paths $a_i/b_i$ and $c_j/d_j$ in $\mathscr{F}_R$ and $\mathscr{F}_S$, is a one-to-one correspondence.
\end{maintheorem}

Theorem~\ref{theoremA} generalises \cite[Theorem~1.1]{Sh2023} which is the special case $N=1$. In that case $K$, $L$, $R$, and $S$ all equal 1 and the theorem simplifies dramatically. Theorem~\ref{theoremA} will be proved in Section~\ref{section83}.

To illustrate Theorem~\ref{theoremA}, consider the  minimal paths 
\[
\gamma=\langle \tfrac{10}{-3},\tfrac{7}{-2},\tfrac{4}{-1},\tfrac{1}{0},\tfrac{4}{1},\tfrac{7}{2},\tfrac{10}{3}\rangle\in\mathscr{F}_1\quad\text{and}\quad\delta=\langle \tfrac{3}{4},\tfrac{6}{9},\tfrac{1}{2},\tfrac{0}{3},\tfrac{-1}{2},\tfrac{-6}{9},\tfrac{-3}{4}\rangle\in\mathscr{F}_3.
\]
 These two paths are exhibited on the classical Farey graph in Figure~\ref{figure4} embedded in the unit disc. With $K=1$, $L=3$, $R=1$, and $S=3$, the formula $m_{ij}=a_id_j-3b_ic_j$ gives the tame 9-tiling with tameness parameters $(1,3,1,3)$ shown in Figure~\ref{figure1}(a) (where $a_i/b_i$ are the vertices of $\gamma$ and $c_j/d_j$ are the vertices of $\delta$).

\begin{figure}[H]
\centering
\begin{tikzpicture}[scale=2.3]
\discfarey

\foreach \numA/\denA/\numB/\denB in {4/-1/1/0,4/-1/7/-2,7/-2/10/-3,1/0/4/1,4/1/7/2,7/2/10/3}{
	   \shline[gammacol,ultra thick](\numA:\denA:\numB:\denB);   
}

\foreach \numA/\denA/\numB/\denB in {3/4/6/9,6/9/1/2,1/2/0/3,0/3/-1/2,-1/2/-6/9,-6/9/-3/4}{
	   \shline[deltacol,ultra thick](\numA:\denA:\numB:\denB);   
}

\foreach \num/\den in {4/-1,1/0,4/1}{
	   \fareylabel(\num:\den); 
}
\foreach \num/\den in {6/9,1/2,0/3,-1/2,-6/9}{
	   \fareylabel(\num:\den); 
}

\node[deltacol] at (250:0.5)  {$\delta$};
\node[gammacol] at (70:0.7) {$\gamma$};
\end{tikzpicture}
\caption{Paths $\gamma=\langle \tfrac{10}{-3},\tfrac{7}{-2},\tfrac{4}{-1},\tfrac{1}{0},\tfrac{4}{1},\tfrac{7}{2},\tfrac{10}{3}\rangle\in\mathscr{F}_1$ and $\delta=\langle \tfrac{3}{4},\tfrac{6}{9},\tfrac{1}{2},\tfrac{0}{3},\tfrac{-1}{2},\tfrac{-6}{9},\tfrac{-3}{4}\rangle\in\mathscr{F}_3$}
\label{figure4}
\end{figure}
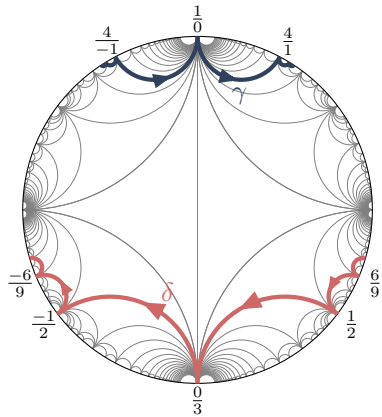

The configuration of two clockwise paths in Figure~\ref{figure4} is representative of the geometry of pairs of paths more generally that determine \emph{positive} integer tilings. In Section~\ref{section14} we provide a complete characterisation of all positive integer tilings in terms of pairs of paths, as a corollary to Theorem~\ref{theoremA}. This result is in the same spirit as \cite[Theorem~1.2]{Sh2023} but more complex for its generality.

Another consequence of Theorem~\ref{theoremA} is that it allows us to completely characterise positive \emph{rational} friezes. An example of such a frieze is shown in Figure~\ref{figure5}; we give a formal definition in Section~\ref{section77}. 

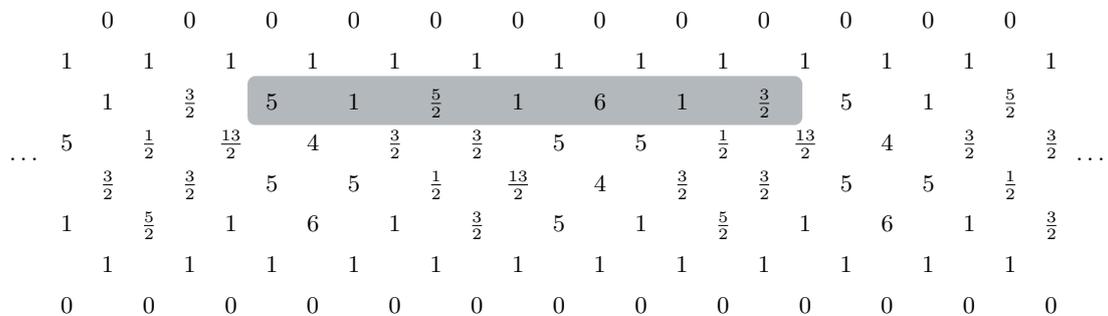
\begin{figure}[ht]
	\centering
	\begin{tikzpicture}
		\fill[rounded corners=3pt,fill=alphacol,fill opacity=0.3] (-4.1,0.5) rectangle (3.2,1.15);
	\node at (0,0) {\small
		\(
		\vcenter{
  \xymatrix @-0.72pc @!0 {
    && 0 && 0 && 0 && 0 && 0 && 0 && 0 && 0 && 0 && 0 && 0 && 0 &&\\
        &1 && 1 && 1 && 1 && 1 && 1 && 1 && 1 && 1 && 1 && 1 && 1 && 1 &\\
    && 1 && \tfrac32 && 5 && 1 && \tfrac52 && 1 && 6 && 1 && \tfrac32 && 5 && 1 && \tfrac52 &&\\
 \raisebox{-4pt}{$\dotsc$}         &5 && \tfrac12 && \tfrac{13}2 && 4 && \tfrac32 && \tfrac32 && 5 && 5 &&  \tfrac12 && \tfrac{13}2 && 4 && \tfrac32 && \tfrac32 &\raisebox{-4pt}{$\dotsc$}  \\
    && \tfrac32 && \tfrac32 && 5 && 5 && \tfrac12 && \tfrac{13}2 && 4 && \tfrac32 && \tfrac32 && 5 && 5 && \tfrac12 &&\\
        &1 && \tfrac52 && 1 && 6 && 1 && \tfrac32 && 5 && 1 && \tfrac52 && 1 && 6 && 1 && \tfrac32 &\\
    && 1 && 1 && 1 && 1 && 1 && 1 && 1 && 1 && 1 && 1 && 1 && 1 && \\
       & 0 && 0 && 0 && 0 && 0 && 0 && 0 && 0 && 0 && 0 && 0 && 0 && 0 &\\
                       }
          }
          		\)
	};
\end{tikzpicture}
\caption{A positive rational frieze}
\label{figure5}
\end{figure}

It is well known that all positive rational (or real) friezes are periodic, so in fact all entries of a positive rational frieze belong to $\tfrac{1}{N}\mathbb{Z}$, for some positive integer $N$. Let us define a \emph{clockwise} path in $\mathscr{F}_R$ to be a finite closed path that traverses the extended real line (endowed with cyclic order) once clockwise. We then have the following classification of  positive friezes over $\tfrac{1}{N}\mathbb{Z}$, which is proved in Section~\ref{section77}, where we also explain finer aspects of the theorem such as the index sets.

\begin{maintheorem}{B}\label{theoremB}
Let $N$, $K$, and $R$ be positive integers with $N=KR$. The map
\[
\hspace*{-30pt}\textnormal{SL}_2(\mathbb{Z})\Big\backslash\mleft\{\parbox{3.8cm}{\centering\textnormal{minimal clockwise paths in $\mathscr{F}_R$ of length~$n$}}\mright\}\quad \longrightarrow\quad \mleft\{\parbox{5.5cm}{\centering\textnormal{positive friezes over $\tfrac{1}{N}\mathbb{Z}$ of width $n$ and greatest common divisor $K$}}\mright\}
\]
determined by 
\[
m_{ij} = \frac{1}{R}(a_jb_i-b_ja_i), \quad 0\leq i-j\leq n,
\]
for a minimal clockwise path $a_i/b_i$ in $\mathscr{F}_R$, is a one-to-one correspondence.
\end{maintheorem}

To illustrate Theorem~\ref{theoremB}, consider the minimal clockwise path 
\[
\gamma=\langle \tfrac10,\tfrac32,\tfrac22,\tfrac01,\tfrac{-2}3,\tfrac{-2}2,\tfrac{-3}2,\tfrac{-1}0\rangle
\]
shown in Figure~\ref{figure46}(a). The corresponding positive frieze over $\tfrac12\mathbb{Z}$ with entries $\tfrac12(a_jb_i-b_ja_i)$ is that of Figure~\ref{figure5}. In Figure~\ref{figure46}(b) the horocycles from the horocycle representation of $\mathscr{F}_2$  are shown at the vertices of $\gamma$. Let $\lambda_{ij}$ denote the lambda length between the $i$th and $j$th horocycles; that is, $\lambda_{ij}=e^{d_{ij}/2}$, where $d_{ij}$ is the signed hyperbolic distance between the two horocycles. Then the $(i,j)$th entry of the positive rational frieze of Figure~\ref{figure5} is equal to $\tfrac12\lambda_{ij}$. 

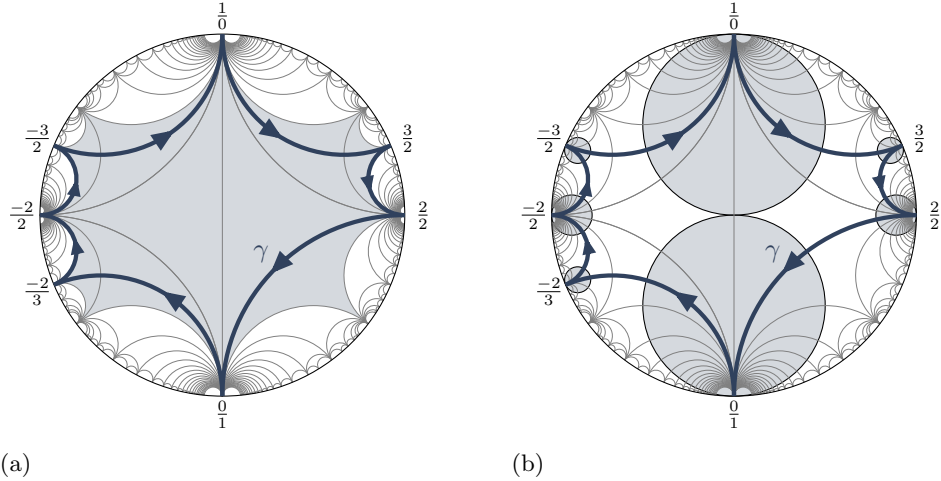
\begin{figure}[ht]
\centering
\begin{subfigure}[b]{0.45\textwidth}
\begin{tikzpicture}[scale=2.4]
\draw[draw=none,fill=gammacol,opacity=0.2] (0,1) arc (-360:-486.87:0.5) arc (-306.87:-472.62:0.125) arc (-292.62:-450.0:0.2) arc (-270.0:-427.38:0.2) arc (-247.38:-413.13:0.125) arc (-233.13:-360.0:0.5) arc (-180.0:-306.87:0.5) arc (-126.87:-270.0:0.333) arc (-90.0:-247.38:0.2) arc (-67.38:-233.13:0.125) arc (-53.13:-180.0:0.5);

\discfarey

\foreach \numA/\denA/\numB/\denB in {1/0/3/2,3/2/2/2,2/2/0/1,0/1/-2/3,-2/3/-2/2,-2/2/-3/2,-3/2/-1/0}{
	   \shline[gammacol,ultra thick](\numA:\denA:\numB:\denB);   
}

\foreach \num/\den in {1/0,3/2,2/2,0/1,-2/3,-2/2,-3/2}{
	   \fareylabel(\num:\den); 
}
\node[gammacol] at (-45:0.3) {$\gamma$};
\end{tikzpicture}
\caption{}
\end{subfigure}
\begin{subfigure}[b]{0.45\textwidth}
\begin{tikzpicture}[scale=2.4]
\foreach \num/\den in {1/0,3/2,2/2,0/1,-2/3,-2/2,-3/2}{
	   \dischorocycle[black,fill=gammacol,fill opacity=0.2](\num:\den);
}

\discfarey

\foreach \numA/\denA/\numB/\denB in {1/0/3/2,3/2/2/2,2/2/0/1,0/1/-2/3,-2/3/-2/2,-2/2/-3/2,-3/2/-1/0}{
	   \shline[gammacol,ultra thick](\numA:\denA:\numB:\denB);   
}

\foreach \num/\den in {1/0,3/2,2/2,0/1,-2/3,-2/2,-3/2}{
	   \fareylabel(\num:\den); 
}
\node[gammacol] at (-45:0.3) {$\gamma$};

\end{tikzpicture}
\caption{}
\end{subfigure}
\caption{The path $\gamma$ in a triangulated polygon (left) and with horocycles (right)}
\label{figure46}
\end{figure}

Any positive rational or real frieze is determined by the periodic part of its third row; this sequence, up to cyclic permutations, is known as the \emph{quiddity cycle} of the frieze. The quiddity cycle is highlighted in Figure~\ref{figure5}. We will explain how to classify positive rational quiddity cycles in the manner of Conway and Coxeter's classification of positive integral quiddity cycles. To achieve this, we use weighted triangular polygons, which are defined as follows. 

Consider a selection $v_0,v_1,\dots,v_{m-1}$ of the vertices of a triangulated polygon, listed in clockwise order. Let $d_i$ be the Farey distance between $v_{i-1}$ and $v_i$ in the triangulated polygon (with $v_{m}=v_0$). To each vertex $v_i$ we assign a positive integer weight $w_i$ such that the products $d_iw_{i-1}w_i$ are all equal to the same positive integer $N$, for $i=1,2,\dots,m$. A triangulated polygon with a collection of vertices of this type is called a \emph{weighted triangular polygon}.

For example, consider the triangulated decagon of Figure~\ref{figure:decagon}(a) -- ignore the integer labels for now. Seven of the ten vertices are coloured black or white; these are the vertices $v_0,v_1,\dots,v_6$. We define the black ones to have weight 1 and the white ones to have weight 2. This triangulated decagon with these weighted vertices is a weighted triangular polygon with $N=2$. It is a Euclidean version of the combinatorially equivalent hyperbolic ideal triangulated polygon shaded in Figure~\ref{figure46}(a). The black and white vertices correspond to the vertices of the path shown in Figure~\ref{figure46}(a), with black for vertices $a/b$ with $\gcd(a,b)=1$ and white for vertices $a/b$ with $\gcd(a,b)=2$.

The integer label at the distinguished vertex $v_i$ of the weighted triangular polygon is the product of the weights $w_{i-1}$ and $w_{i+1}$ of the two neighbouring vertices and the Farey distance $\Delta(v_{i-1},v_{i+1})$ between those two vertices. Reading these labels clockwise in order we obtain the quiddity cycle of a positive integer 4-frieze, as shown in Figure~\ref{figure:decagon}(b). That is, we obtain a type of frieze where the diamonds have determinant 4 rather than~1 (this is an example of a frieze pattern with coefficients \cite{CuHoJo2020}, which are related to cluster algebras with coefficients \cite{FoZe2007}). On dividing each entry of this frieze by 2 we obtain the positive rational frieze of Figure~\ref{figure5}. 

This procedure constitutes a combinatorial interpretation of the one-to-one correspondence of Theorem~\ref{theoremB}, which shows that \emph{every} positive rational frieze arises in this way.

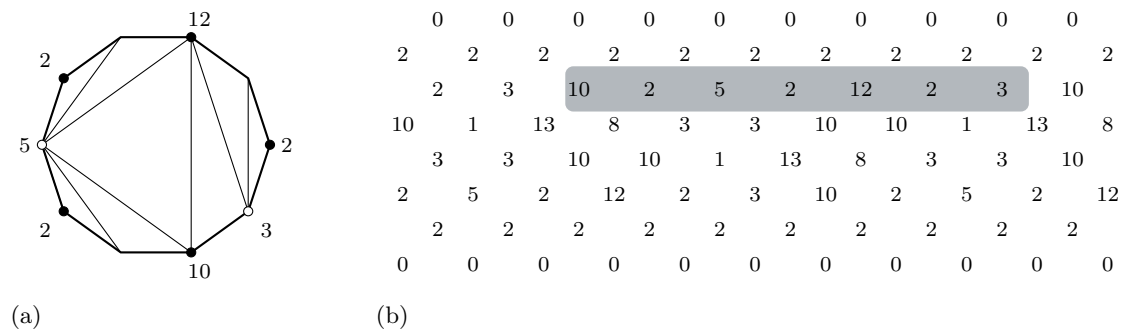
\begin{figure}[ht]
\begin{subfigure}[b]{0.25\textwidth}
\begin{tikzpicture}[scale=1.5]

\foreach \i [count=\j from 0] in {0,36,...,324} {
  \coordinate (P\j) at ({cos(\i)},{-sin(\i)});
}

\draw[thick] (P0) -- (P1) -- (P2) -- (P3) -- (P4) -- (P5) -- (P6) -- (P7) -- (P8) -- (P9) -- cycle;

\draw (P9) -- (P1) -- (P8) -- (P2) -- (P5) -- (P4);
\draw (P8) -- (P5) -- (P7);
\draw (P5) -- (P3);

\tikzmath{\rad = 0.04;} 
\filldraw[fill=black] (P0) circle (\rad) node[label={[label distance=-1mm]0:{\footnotesize 2}}] {};
\filldraw[fill=white] (P1) circle (\rad) node[label={[label distance=-1mm]-36:{\footnotesize 3}}] {};
\filldraw[fill=black] (P2) circle (\rad) node[label={[label distance=-1mm,xshift=-5pt]-72:{\footnotesize 10}}] {};
\filldraw[fill=black] (P4) circle (\rad) node[label={[label distance=-1mm]-144:{\footnotesize 2}}] {};
\filldraw[fill=white] (P5) circle (\rad) node[label={[label distance=-1mm]-180:{\footnotesize 5}}] {};
\filldraw[fill=black] (P6) circle (\rad) node[label={[label distance=-1mm]-216:{\footnotesize 2}}] {};
\filldraw[fill=black] (P8) circle (\rad) node[label={[label distance=-1mm,xshift=-5pt]-288:{\footnotesize 12}}] {};
\end{tikzpicture}
\caption{}
\end{subfigure}
\qquad\quad
\begin{subfigure}[b]{0.7\textwidth}\footnotesize
	\begin{tikzpicture}
		\fill[rounded corners=3pt,fill=alphacol,fill opacity=0.3] (-2.5,0.4) rectangle (3.6,1.0);
	\node at (0,0) {
		\(
\vcenter{
  \xymatrix @-0.9pc @!0 {
    & 0 && 0 && 0 && 0 && 0 && 0 && 0 && 0 && 0 && 0 &\\
        2 && 2 && 2 && 2 && 2 && 2 && 2 && 2 && 2 && 2 && 2 \\
    & 2 && 3 && 10 && 2 && 5 && 2 && 12 && 2 && 3 && 10 &\\
  10 && 1 && 13 && 8 && 3 && 3 && 10 && 10 && 1 && 13 && 8   \\
    & 3 && 3 && 10 && 10 && 1 && 13 && 8 && 3 && 3 && 10 &\\
        2 && 5 && 2 && 12 && 2 && 3 && 10 && 2 && 5 && 2 && 12 \\
    & 2 && 2 && 2 && 2 && 2 && 2 && 2 && 2 && 2 && 2 & \\
        0 && 0 && 0 && 0 && 0 && 0 && 0 && 0 && 0 && 0 && 0 \\
                       }
          }
\)
};
\end{tikzpicture}
\caption{}
\end{subfigure}
\caption{A weighted triangulated decagon (left) and the corresponding 4-frieze (right)}
\label{figure:decagon}
\end{figure}

\subsection{Integer hypertilings}

We now turn our attention to integer hypertilings. We will demonstrate that each tame integer hypertiling can be constructed from three paths in Farey graphs and a \emph{Bhargava cube}, which is an element of $\mathbb{Z}^2\otimes\mathbb{Z}^2\otimes\mathbb{Z}^2$, named in honour of the pioneering work of Bhargava \cite{Bh2004}. We can regard a Bhargava cube $M$ as a function $M\colon\{0,1\}^3\longrightarrow \mathbb{Z}$. Let us write $m_{ijk}$ for $M(i,j,k)$, and then we can represent the cube in the form
\begin{equation}\label{equation1}
\mleft(\begin{matrix}
m_{000} & m_{010} \\
m_{100} & m_{110}
\end{matrix}\
\middle\vert\
\begin{matrix}
m_{001} & m_{011} \\
m_{101} & m_{111}
\end{matrix}\mright).
\end{equation}
Here we can think of the left-hand square as lying above the right-hand square to form a cube of eight integers (and we use similar notation for larger cubes). There is a left action of the direct product $\text{SL}_2(\mathbb{Z})^3=\text{SL}_2(\mathbb{Z})\times\text{SL}_2(\mathbb{Z})\times\text{SL}_2(\mathbb{Z})$ on Bhargava cubes under which the triple $(A,B,C)\in\text{SL}_2(\mathbb{Z})^3$ sends the Bhargava cube $M$ to $M'$, where
\[
m'_{ijk}= \sum_{p,q,r=0}^1 A_{ip}B_{jq}C_{kr}m_{pqr}.
\] 
Indeed, given any triple of 2-by-2 integer matrices $(A,B,C)$ (not necessarily in $\text{SL}_2(\mathbb{Z})^3$), this formula defines another Bhargava cube $M'$ from $M$. 

The \emph{Cayley hyperdeterminant} (or just \emph{hyperdeterminant}) of a Bhargava cube $M=(M_1 \mid M_2)$ is
\begin{align*}
\Det M &= \tfrac14\mleft(\det (M_1+M_2)-\det(M_1-M_2)\mright)^2-4\det M_1\det M_2\\
&= ((m_{000}m_{111}-m_{011}m_{100})+(m_{001}m_{110}-m_{010}m_{101}))^2\\
&\hspace*{5mm} -4(m_{000}m_{110}-m_{010}m_{100})(m_{001}m_{111}-m_{011}m_{101}).
\end{align*}

We write $\Det$ for hyperdeterminants and $\det$ for normal determinants (of square matrices). The $\text{SL}_2(\mathbb{Z})^3$ action preserves hyperdeterminants \cite{book:hyperdet}. More generally, if $A$, $B$, and $C$, are 2-by-2 integer matrices and $M'=(A,B,C)M$, then $\Det M'=(\det A\det B\det C)^2\Det M$.

We are now in a position to define integer hypertilings, which are tensors $\mathbf{M}\colon \mathcal{I}\times \mathcal{J}\times \mathcal{K}\longrightarrow \mathbb{Z}$ subject to certain conditions. A subblock of a tensor is defined in much the same way as a subblock of a matrix.

\begin{definition}
For a nonzero integer $N$, an \emph{$N$-hypertiling} is a function $\mathbf{M}\colon \mathcal{I}\times \mathcal{J}\times \mathcal{K}\longrightarrow \mathbb{Z}$, for some index sets $\mathcal{I}$, $\mathcal{J}$, and $\mathcal{K}$, with the property that every 2-by-2-by-2 subblock has hyperdeterminant $N$. An \emph{integer hypertiling} is an $N$-hypertiling, for some value of $N$.
\end{definition}

Next we will define tame integer hypertilings. For this, we could mimic the definition of tame integer tilings by considering hyperdeterminants of 3-by-3-by-3 subblocks; however, it suits our purposes better to use cross sections of hypertilings. Formally, a \emph{cross section} of an integer hypertiling $\mathbf{M}\colon \mathcal{I}\times \mathcal{J}\times \mathcal{K}\longrightarrow \mathbb{Z}$ is the restriction of $\mathbf{M}$ to any one of $\{i_0\}\times \mathcal{J}\times \mathcal{K}$ (for some $i_0\in\mathcal{I}$), $\mathcal{I}\times \{j_0\}\times \mathcal{K}$ (for some $j_0\in\mathcal{J}$), or $\mathcal{I}\times \mathcal{J}\times \{k_0\}$ (for some $k_0\in\mathcal{K}$). Part of the definition of a tame integer hypertiling (to follow) is that each cross section must be a tame integer tiling. This includes the possibility that the cross section may be a tame 0-tiling; these are discussed in Section~\ref{section zero}.

In addition to cross sections, we need one further concept to handle some awkward hypertilings. Consider any 2-by-2-by-3 or 2-by-3-by-2 or 3-by-2-by-2 subblock $M$ of $\mathbf{M}$; let us assume the first case -- the others are similar. There are four 1-by-1-by-3 subblocks of $M$. Choose any two of these subblocks and stack them alongside each other to form a 3-by-2 matrix
\[
\begin{pmatrix}a&b\\c& d\\e&f\end{pmatrix}.
\]
We say that the integer hypertiling $\mathbf{M}$ is \emph{synchronised} if $ad-bc=cf-de$ for any 3-by-2 matrix formed in this way. If the two 1-by-1-by-3 subblocks belong to the same tame cross section, then this identity is automatically satisfied; it is only when the two subblocks are from different cross sections that the synchronised property offers an additional restriction.

\begin{definition}
An integer hypertiling $\mathbf{M}$ is \emph{tame} if it is synchronised and if each cross section of $\mathbf{M}$ is a tame integer tiling. 
\end{definition}

It is a straightforward exercise (which we omit) to show that every positive integer hypertiling with all cross sections integer tilings is necessarily tame.

The synchronised condition removes some pathological integer hypertilings. For example, consider the  65-hypertiling
\[
\setlength{\arraycolsep}{7pt}
\mleft(\begin{matrix*}[r]0 & \llap{$-$}1 & 0\\ 8 & 0 & \llap{$-$}8\\ 0 & 1 & 0\end{matrix*}\ \middle\vert\ \begin{matrix*}[r]0 & \llap{$-$}1 & 0\\ 1 & 0 & \llap{$-$}1\\ 0 & 1 & 0\end{matrix*}\ \middle\vert\ \begin{matrix*}[r]0 & \llap{$-$}7 & 0\\ 4 & 0 & \llap{$-$}4\\ 0 & 7 & 0\end{matrix*}\mright).
\]
Each cross section is a tame integer tiling, however, by considering  the 3-by-2 matrix
\[
\begin{pmatrix*}[r]8&  \!\!-1\\ 1 &  \!\!-1\\ 4 &  \!\!-7\end{pmatrix*} 
\]
we can see that this integer hypertiling is not synchronised.

For an example of a tame integer hypertiling, see Figure~\ref{figure6}, which is a 5-by-5-by-5 tame 1-hypertiling. The five cross sections displayed from left to right are tame $N$-tilings for values $N=1,6,15,2,10$. 

\begin{figure}[ht]
\centering\scriptsize
\(
\setlength{\arraycolsep}{4pt}
\renewcommand{\arraystretch}{1.4}
\mleft(\begin{matrix}4 & 11 & 7 & 10 & 13\\5 & 14 & 9 & 13 & 17\\1 & 3 & 2 & 3 & 4\\2 & 7 & 5 & 8 & 11\\1 & 4 & 3 & 5 & 7\end{matrix}\ \middle\vert\ \begin{matrix}10 & 28 & 18 & 26 & 34\\13 & 37 & 24 & 35 & 46\\3 & 9 & 6 & 9 & 12\\8 & 26 & 18 & 28 & 38\\5 & 17 & 12 & 19 & 26\end{matrix}\ \middle\vert\ \begin{matrix}16 & 45 & 29 & 42 & 55\\21 & 60 & 39 & 57 & 75\\5 & 15 & 10 & 15 & 20\\14 & 45 & 31 & 48 & 65\\9 & 30 & 21 & 33 & 45\end{matrix}\ \middle\vert\ \begin{matrix}6 & 17 & 11 & 16 & 21\\8 & 23 & 15 & 22 & 29\\2 & 6 & 4 & 6 & 8\\6 & 19 & 13 & 20 & 27\\4 & 13 & 9 & 14 & 19\end{matrix}\ \middle\vert\ \begin{matrix}14 & 40 & 26 & 38 & 50\\19 & 55 & 36 & 53 & 70\\5 & 15 & 10 & 15 & 20\\16 & 50 & 34 & 52 & 70\\11 & 35 & 24 & 37 & 50\end{matrix}\mright)
\)
\caption{A tame 1-hypertiling}
\label{figure6}
\end{figure}

Our first main result on integer hypertilings is a complete classification of all tame 1-hypertilings using triple Hadamard products of paths in $\mathscr{F}$. We use the subgroup 
\[
\{(I,I,I),(I,-I,-I),(-I,I,-I),(-I,-I,I)\}
\]
 of $\text{SL}_2(\mathbb{Z})$, where $I$ is the identity matrix. This subgroup is isomorphic to the Klein four-group $\mathbb{Z}_2\times \mathbb{Z}_2$, and it acts on triples of paths in $\mathscr{F}$.

\begin{maintheorem}{C}\label{theoremC}
The map
\[
(\mathbb{Z}_2\times \mathbb{Z}_2)\backslash \mleft\{\parbox{3.4cm}{\centering\textnormal{triples of paths in $\mathscr{F}$}}\mright\}\quad \longrightarrow\quad \mleft\{\parbox{3.2cm}{\centering\textnormal{tame
1-hypertilings}}\mright\}
\]
determined by 
\[
m_{ijk} = a_ic_je_k+b_id_jf_k, \qquad \text{$(i,j,k)\in\mathcal{I}\times\mathcal{J}\times\mathcal{K}$,}
\]
for paths $a_i/b_i$, $c_j/d_j$, and $e_k/f_k$ in $\mathscr{F}$, is a one-to-one correspondence.
\end{maintheorem}

Theorem~\ref{theoremC} will be proved in Section~\ref{section44}. In that section we will also see that every Bhargava cube of hyperdeterminant 1 is $\textnormal{SL}_2(\mathbb{Z})^3$-equivalent to
\[
\mleft(\begin{matrix}1 & 0 \\ 0 & 0 \end{matrix}\ \middle\vert\  \begin{matrix}0 & 0 \\ 0 & 1 \end{matrix}\mright).
\]
This result can be achieved using the methods of Bhargava \cite{Bh2004}; instead, however, we describe explicit triples in $\textnormal{SL}_2(\mathbb{Z})^3$ that achieve the equivalence.

For an example of the correspondence in Theorem~\ref{theoremC}, consider the  paths  $\gamma_1=\langle \tfrac12,\tfrac13,\tfrac01,\tfrac{-1}4,\tfrac{-1}{3}\rangle$,   $\gamma_2=\langle \tfrac21,\tfrac53,\tfrac32,\tfrac43,\tfrac54\rangle$,  and  $\gamma_3=\langle \tfrac11,\tfrac23,\tfrac35,\tfrac12,\tfrac25\rangle$ illustrated in Figure~\ref{figure7} (with vertices $a_i/b_i$, $c_j/d_j$, and $e_k/f_k$, respectively). The tame 1-hypertiling corresponding to these three paths is that shown in Figure~\ref{figure6}.

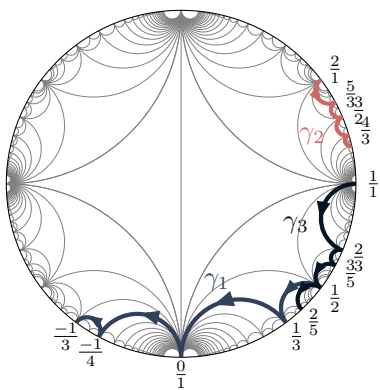
\begin{figure}[H]
\centering
\begin{tikzpicture}[scale=2.3]
\discfarey

\foreach \numA/\denA/\numB/\denB in {1/2/1/3,1/3/0/1,0/1/-1/4,-1/4/-1/3}{
	   \shline[gammacol,ultra thick](\numA:\denA:\numB:\denB);   
}
\foreach \num/\den in {1/2,1/3,0/1,-1/4,-1/3}{
	   \fareylabel(\num:\den); 
}

\foreach \numA/\denA/\numB/\denB in {2/1/5/3,5/3/3/2,3/2/4/3,4/3/5/4}{
	   \shline[deltacol,ultra thick](\numA:\denA:\numB:\denB);   
}
\foreach \num/\den in {2/1,5/3,3/2,4/3}{
	   \fareylabel(\num:\den); 
}

\foreach \numA/\denA/\numB/\denB in {1/1/2/3,2/3/3/5,3/5/1/2,1/2/2/5}{
	   \shline[alphacol,ultra thick](\numA:\denA:\numB:\denB);   
}
\foreach \num/\den in {1/1,2/3,3/5,2/5}{
	   \fareylabel(\num:\den); 
}

\node[gammacol] at (-70:0.6) {$\gamma_1$};
\node[deltacol] at (20:0.8) {$\gamma_2$};
\node[alphacol] at (-20:0.7) {$\gamma_3$};
   
\end{tikzpicture}
\caption{The paths $\gamma_1=\langle \tfrac12,\tfrac13,\tfrac01,\tfrac{-1}4,\tfrac{-1}{3}\rangle$,   $\gamma_2=\langle \tfrac21,\tfrac53,\tfrac32,\tfrac43,\tfrac54\rangle$,  and  $\gamma_3=\langle \tfrac11,\tfrac23,\tfrac35,\tfrac12,\tfrac25\rangle$ }
\label{figure7}
\end{figure}

Our fourth and final main result is about classifying all tame integer hypertilings. In this theorem we use the notation $u_{i0}/u_{i1}$, $v_{j0}/v_{j1}$, and $w_{k0}/w_{k1}$ (for $i\in \mathcal{I}, j\in \mathcal{J}, k\in \mathcal{K}$) to describe paths in $\mathscr{F}_R$, $\mathscr{F}_S$, and $\mathscr{F}_T$.  We say that a Bhargava cube $A$ is \emph{nonsingular} if $\Det A\neq 0$. 

\begin{maintheorem}{D}\label{theoremD}
Let $A$ be a nonsingular Bhargava cube. For any positive integers $R$, $S$, and $T$, there is a map
\[
\mleft\{\parbox{3.7cm}{\centering\textnormal{triples of minimal paths in $\mathscr{F}_R$, $\mathscr{F}_S$, and $\mathscr{F}_T$ }}\mright\}\quad \longrightarrow\quad \mleft\{\parbox{3.9cm}{\centering\textnormal{tame integer hypertilings}}\mright\}
\]
determined by 
\[
m_{ijk} = \sum_{p,q,r=0}^1A_{pqr}u_{ip}v_{jq}w_{kr},\qquad \text{$(i,j,k)\in\mathcal{I}\times\mathcal{J}\times\mathcal{K}$,}
\]
for minimal paths $u_{i0}/u_{i1}$, $v_{j0}/v_{j1}$, and $w_{k0}/w_{k1}$ in $\mathscr{F}_R$, $\mathscr{F}_S$, and $\mathscr{F}_T$. Furthermore, each tame integer hypertiling arises in this way.
\end{maintheorem}

For a given nonsingular Bhargava cube $A$, and positive integers $R$, $S$, and $T$, the formula of Theorem~\ref{theoremD} determines a tame $N$-hypertiling, where $N=(RST)^2\Det A$. Every tame $N$-hypertiling arises in this way, although typically not all from the same cube $A$ (the case $N=1$ of Theorem~\ref{theoremC} is exceptional). Any Bhargava cube $A'$ that is  $\text{SL}_2(\mathbb{Z})^3$-equivalent to $A$ will determine exactly the same collection of tame $N$-hypertilings in Theorem~\ref{theoremD} as that of $A$.

There may be many triples of paths that determine the same hypertiling for a given Bhargava cube $A$. Precisely which triples depends on the stabilizer of $A$ in $\text{SL}_2(\mathbb{Z})^3$. We will see in Section~\ref{section44} that the stabilizer of a Bhargava cube with hyperdeterminant 1 is the group $\mathbb{Z}_2\times\mathbb{Z}_2$ discussed earlier; however, stabilizers in general need not be so simple. Theorem~\ref{theoremD} will be proved in Section~\ref{section66}.

When $R=S=T=1$, Theorem~\ref{theoremD} has special relevance to Bhargava's approach to binary quadratic forms. In this case the three paths belong to $\mathscr{F}$ and the formula for $m_{ijk}$ tells us that all Bhargava cubes in the integer hypertiling $\mathbf{M}$ are $\text{SL}_2(\mathbb{Z})^3$-equivalent. Bhargava \cite{Bh2004} extracts quadratic forms $Q_1$, $Q_2$, and $Q_3$ from a Bhargava cube $A$ by the formula $Q_i(x,y)=-\det(M_ix-N_iy)$, where $M_i$ and $N_i$ are parallel 2-by-2 slices of $A$. If these forms are primitive, then the discriminant of $Q_i$ is $\Det A$ and the sum $[Q_1]+[Q_2]+[Q_3]$ of $\text{SL}_2(\mathbb{Z})$-equivalence classes of the three forms is the identity in the class group of forms of discriminant $\Det A$. The local cubes of $\mathbf{M}$, all $\text{SL}_2(\mathbb{Z})^3$-equivalent, give a structured family of explicit representatives of $[Q_1]$, $[Q_2]$, and $[Q_3]$. For example, by considering the top corner cube
\[
\mleft(\begin{matrix}2 & 7 \\ 1 & 4 \end{matrix}\ \middle\vert\  \begin{matrix}8 & 26 \\ 5 & 17 \end{matrix}\mright)
\]
of Figure~\ref{figure6} we obtain representative quadratic forms $-x^2+5xy-6y^2$, $-2x^2+11xy-15y^2$, and $4x^2-7xy+3y^2$ for this tame integer hypertiling.

In the final section (Section~\ref{section55}) we discuss the observations of Demonet et al.\ \cite{DePlRuTu2018}. Those authors showed that there is essentially a unique bi-infinite positive integer hypertiling with the property that every cross section is an $\text{SL}_2$-tiling. (In fact, they established this in multiple dimensions.) Part of this hypertiling is shown in Figure~\ref{figure8}. The entries are Fibonacci numbers $F_n$ of odd index $n$, where $F_0=0$, $F_1=1$, and $F_{n+1}=F_n+F_{n-1}$, for $n\in\mathbb{Z}$. Each cross section is the same up to translations. Note that these cross sections are termed \emph{anti}-$\text{SL}_2$-tilings in  \cite{DePlRuTu2018} due to different conventions.

\begin{figure}[ht]
\centering
{\small
\[
\renewcommand*{\arraystretch}{1.4}
\hspace*{-0.7cm}\mleft.\begin{matrix}
34 & 13 & 5 & 2 & 1 & 1\\ 13&5 & 2 & 1 & 1 & 2\\ 5&2 & 1 & 1 & 2 & 5\\ 2&1& 1 & 2 & 5 & 13\\ 1&1 & 2 & 5 & 13 & 34 \\ 1&2 & 5 & 13 & 34 & 89  
\end{matrix}\quad
\middle\vert\quad
\begin{matrix}
13&5 & 2 & 1 & 1 & 2\\5 & 2 & 1 & 1 & 2 & 5\\ 2&1& 1 & 2 & 5 & 13\\ 1&1&2 & 5 & 13 & 34 \\ 1&2&5 & 13 & 34 & 89  \\ 2&5&13 & 34 & 89 & 233
\end{matrix}\quad
\middle\vert\quad
\begin{matrix}
5 & 2 & 1 & 1 & 2 & 5\\ 2&1& 1 & 2 & 5 & 13\\ 1&1&2 & 5 & 13 & 34 \\ 1&2&5 & 13 & 34 & 89  \\ 2&5&13 & 34 & 89 & 233\\ 5&13&34 & 89 & 233 & 610
 \end{matrix}\mright.
\]
}
\caption{Part of a positive integer hypertiling with $\text{SL}_2$ cross sections}
\label{figure8}
\end{figure}

Absent from \cite{DePlRuTu2018} is the simple formula $m_{ijk}=F_{2(i+j+k)-1}$ for this integer hypertiling (or, more generally, $m_{ijk}=F_{2(e+i+j+k)-1}$ for a translate of the hypertiling, for any integer $e$). From this formula one can see that the hypertiling is invariant under translations by triples $(i,j,k)$ where $i+j+k=0$ and that cross sections are positive integer $\text{SL}_2$-tilings, all translation equivalent. It is also obvious how the formula generalises to multiple dimensions.

This hypertiling can be described using three (identical) Farey paths $F_{2n-1}/F_{2n+1}$ and the Bhargava cube
\[
\mleft(\begin{matrix*}[r]3& -1\\ -1 & 0\end{matrix*}\ \middle\vert\ \begin{matrix*}[r]-1 & 0\\ 0 & 1\end{matrix*}\mright).
\]
We will see in Section~\ref{section55} that there are few other integer hypertilings with $\text{SL}_2$ cross sections, even when we allow negative integers.

\subsection*{Acknowledgements}

Short and van Son were supported by EPSRC grant EP/W002817/1, and Zabolotskii was supported by EPSRC DTP grant EP/W524098/1. Short and Zabolotskii acknowledge CIRM and the organisers of the workshop \emph{Frieze patterns in algebra, combinatorics and geometry} (May, 2025), at which the vision for this paper was presented and developed.

\section{Paths and itineraries}

We recall that, given an index set $\mathcal{I}$, the set $\mathcal{I}^*$ is obtained from $\mathcal{I}$ by removing the greatest element from $\mathcal{I}$ (if it has one) and removing the least element from $\mathcal{I}$ (if it has one). 

\begin{definition}
The \emph{itinerary} of a path $(a_i/b_i)_{i\in \mathcal{I}}$ in $\mathscr{F}_R$ is the rational sequence 
\[
\lambda_i =\frac{1}{R}(a_{i-1}b_{i+1}-b_{i-1}a_{i+1}),
\]
for $i\in \mathcal{I}^*$.
\end{definition}

This definition of itinerary coincides with that of \cite{ShVaZa2025} in the special case $R=1$. The next lemma is similar to \cite[Lemma~3.2]{ShVaZa2025}, so we abridge the proof.

\begin{lemma}\label{lemmak}
The itinerary of a path $\gamma$ in $\mathscr{F}_R$ with vertices $(a_i/b_i)_{i\in \mathcal{I}}$ is the unique sequence of rationals $(\mu_i)_{i\in \mathcal{I}^*}$  that satisfies
\[
a_{i-1}+a_{i+1}=\mu_ia_i\quad\text{and}\quad b_{i-1}+b_{i+1}=\mu_i b_i,
\]
for $i\in \mathcal{I}^*$.
\end{lemma}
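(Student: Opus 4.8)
The plan is to prove existence (the itinerary satisfies these recurrences) and uniqueness separately, with existence being a direct linear-algebra computation and uniqueness following from the nondegeneracy of consecutive vertices.

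First I would establish existence by showing that $\mu_i=\lambda_i$ works. Fix $i\in\mathcal{I}^*$, so that the three consecutive vertices $(a_{i-1},b_{i-1})$, $(a_i,b_i)$, $(a_{i+1},b_{i+1})$ are defined, with directed edges giving $a_{i-1}b_i-b_{i-1}a_i=R$ and $a_ib_{i+1}-b_ia_{i+1}=R$. The key structural fact is that in $\mathscr{F}_R$ a vertex $(a_i,b_i)$ is a nonzero vector, and the two neighbours $(a_{i-1},b_{i-1})$ and $(a_{i+1},b_{i+1})$ together with $(a_i,b_i)$ live in the rank-two lattice $\mathbb{Z}^2$. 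Since $(a_{i-1},b_{i-1})$ and $(a_i,b_i)$ are linearly independent (their determinant is $R\neq 0$), they form a basis of $\mathbb{Q}^2$, so I can write $(a_{i+1},b_{i+1})=\alpha(a_{i-1},b_{i-1})+\beta(a_i,b_i)$ for unique rationals $\alpha,\beta$. Pairing this against $(a_i,b_i)$ and $(a_{i-1},b_{i-1})$ via the symplectic form and using the two edge conditions, Cramer's rule yields $\alpha=-1$ and $\beta=\lambda_i$ directly: indeed $a_ib_{i+1}-b_ia_{i+1}=\alpha(a_ib_{i-1}-b_ia_{i-1})=\alpha(-R)=R$ forces $\alpha=-1$, and then $\beta=\tfrac{1}{R}(a_{i-1}b_{i+1}-b_{i-1}a_{i+1})=\lambda_i$. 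Rearranging $(a_{i+1},b_{i+1})=-(a_{i-1},b_{i-1})+\lambda_i(a_i,b_i)$ gives exactly $a_{i-1}+a_{i+1}=\lambda_i a_i$ and $b_{i-1}+b_{i+1}=\lambda_i b_i$, as required.

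For uniqueness, suppose $(\mu_i)$ is any sequence of rationals satisfying the two stated recurrences for all $i\in\mathcal{I}^*$. At each such $i$, at least one of $a_i$ or $b_i$ is nonzero (the vertex is nonzero), so the relevant equation among $a_{i-1}+a_{i+1}=\mu_i a_i$ or $b_{i-1}+b_{i+1}=\mu_i b_i$ determines $\mu_i$ uniquely; hence $\mu_i=\lambda_i$. One subtlety worth a sentence is consistency: if both $a_i$ and $b_i$ are nonzero, the two equations must give the same value of $\mu_i$, and this is guaranteed precisely because $\lambda_i$ already satisfies both (by the existence half), so no contradiction arises.

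I do not expect a serious obstacle here, since the statement is essentially the assertion that three consecutive Farey vertices satisfy a three-term linear recurrence with the itinerary as coefficient. The only point requiring mild care is the bookkeeping of the symplectic pairings and the sign $\alpha=-1$, which is where the two edge conditions $a_{i-1}b_i-b_{i-1}a_i=R$ and $a_ib_{i+1}-b_ia_{i+1}=R$ (both equal to $R$ with consistent orientation along the directed path) must be invoked correctly; getting the orientation right is what makes the coefficient of $(a_{i-1},b_{i-1})$ come out to $-1$ rather than $+1$. Since the excerpt notes this lemma parallels \cite[Lemma~3.2]{ShVaZa2025} in the case $R=1$, I would model the write-up on that argument, merely carrying the factor $1/R$ through the definition of the itinerary.
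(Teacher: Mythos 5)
Your proof is correct and in substance the same as the paper's: the paper verifies the recurrence by the single identity $a_{i-1}+a_{i+1}=\tfrac{1}{R}\bigl(a_{i-1}(a_ib_{i+1}-b_ia_{i+1})+a_{i+1}(a_{i-1}b_i-b_{i-1}a_i)\bigr)=\lambda_i a_i$, which is exactly the computation your basis decomposition $(a_{i+1},b_{i+1})=-(a_{i-1},b_{i-1})+\lambda_i(a_i,b_i)$ repackages, and it dismisses uniqueness as straightforward, which your observation that $(a_i,b_i)\neq(0,0)$ makes precise.
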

\begin{proof}
Let $(\lambda_i)_{i\in \mathcal{I}^*}$ be the itinerary of $\gamma$. Then
\[
a_{i-1}+a_{i+1}=\frac{1}{R}(a_{i-1}(a_{i}b_{i+1}-b_{i}a_{i+1})+a_{i+1}(a_{i-1}b_{i}-b_{i-1}a_{i}))=\lambda_ia_i,
\]
and similarly $b_{i-1}+b_{i+1}=\lambda_ib_i$. The uniqueness assertion of the lemma is straightforward.
\end{proof}

In the classical Farey graph $\mathscr{F}_1$, a path is determined by its itinerary, up to $\text{SL}_2(\mathbb{Z})$ equivalence. This is \emph{not} true in $\mathscr{F}_R$, for $R>1$. For example, the paths $\langle \tfrac10,\tfrac02,\tfrac{-1}{2}\rangle$ and $\langle \tfrac20,\tfrac01,\tfrac{-2}{1}\rangle$ in $\mathscr{F}_2$ have the same itinerary -- namely the sequence with a single element 1 -- but they are not equivalent under $\text{SL}_2(\mathbb{Z})$ because $\tfrac10$ and $\tfrac20$ lie in distinct orbits. The next theorem states that \emph{minimal} paths in $\mathscr{F}_R$ with the same itinerary \emph{are} equivalent under $\text{SL}_2(\mathbb{Z})$, which explains the importance of the concept of minimal paths.

\begin{theorem}\label{theoremHD}
Let $\gamma$ and $\delta$ be two minimal paths in $\mathscr{F}_R$ with the same itinerary. Then $\gamma$ and $\delta$ are equivalent under the action of $\textnormal{SL}_2(\mathbb{Z})$.
\end{theorem}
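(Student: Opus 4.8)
The plan is to exploit the second-order linear recurrence that Lemma~\ref{lemmak} attaches to the common itinerary. Write $\mu_i$ for the shared itinerary and set $v_i=(a_i,b_i)$ and $v_i'=(c_i,d_i)$ for the vertices of $\gamma$ and $\delta$. By Lemma~\ref{lemmak}, both vector sequences obey the \emph{same} recurrence $v_{i+1}=\mu_i v_i-v_{i-1}$ (and likewise $v_{i+1}'=\mu_i v_i'-v_{i-1}'$). Thus $\gamma$ and $\delta$ are two solutions of one recurrence that differ only in their initial data, and the whole task reduces to matching that initial data by an element of $\textnormal{SL}_2(\mathbb{Z})$.

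First I would produce a candidate transformation over $\mathbb{Q}$. The edge condition gives $a_0b_1-a_1b_0=R\neq 0$, so $v_0,v_1$ form a basis of $\mathbb{Q}^2$, and there is a unique $g\in\textnormal{GL}_2(\mathbb{Q})$ with $gv_0=v_0'$ and $gv_1=v_1'$. A two-sided induction on $i$ then shows $gv_i=v_i'$ for every $i\in\mathcal{I}$: by linearity of $g$ and the shared recurrence, $gv_{i+1}=g(\mu_iv_i-v_{i-1})=\mu_iv_i'-v_{i-1}'=v_{i+1}'$, and symmetrically going downward, so one inducts upward from index $1$ and downward from index $0$ (possible since $\mathcal{I}$ is an interval of consecutive integers containing $0$ and $1$). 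Hence $g$ already sends $\gamma$ to $\delta$ as a map of $\mathbb{Q}^2$; it remains only to show $g\in\textnormal{SL}_2(\mathbb{Z})$. For the determinant, put $P=\begin{pmatrix}a_0&a_1\\b_0&b_1\end{pmatrix}$ and $Q=\begin{pmatrix}c_0&c_1\\d_0&d_1\end{pmatrix}$, so that $gP=Q$; both determinants equal $R$ by the edge condition, whence $\det g=\det Q/\det P=1$.

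The main obstacle is integrality, since a priori $g$ only has rational entries. This is exactly the step that forces the minimality hypothesis, and it is what rules out the counterexample preceding the theorem (two inequivalent paths in $\mathscr{F}_2$ with equal itinerary). The point is that $g$ maps every vertex $v_i\in\mathbb{Z}^2$ to $v_i'\in\mathbb{Z}^2$, so $g$ carries the subgroup $\Lambda\subseteq\mathbb{Z}^2$ generated by the $v_i$ into $\mathbb{Z}^2$. I would then observe that $\Lambda=\mathbb{Z}^2$ precisely because $\gamma$ is minimal: the integers $a_jb_i-b_ja_i$ are, up to sign, the $2\times 2$ minors of the matrix whose columns are the $v_i$, and by Smith normal form their gcd equals the product of the two invariant factors of that matrix. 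Since one invariant factor divides the other, this gcd is $1$ exactly when both invariant factors are $1$, that is, exactly when the columns generate all of $\mathbb{Z}^2$. Therefore $g(\mathbb{Z}^2)=g(\Lambda)\subseteq\mathbb{Z}^2$, so $g$ has integer entries; together with $\det g=1$ this gives $g\in\textnormal{SL}_2(\mathbb{Z})$ with $g\cdot\gamma=\delta$, as required.

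I expect the only genuinely delicate point to be the passage from ``gcd of minors equals $1$'' to ``the vertices generate $\mathbb{Z}^2$'', i.e. the Smith normal form computation; everything else is bookkeeping on the recurrence. It is worth noting that only the minimality of $\gamma$ is actually used in the argument: once $g\in\textnormal{SL}_2(\mathbb{Z})$ maps $\gamma$ to $\delta$, the minors of $\delta$ coincide with those of $\gamma$ (they are $\textnormal{SL}_2(\mathbb{Z})$-invariant), so minimality of $\delta$ is automatic.
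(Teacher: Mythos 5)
Your proposal is correct, and it shares the paper's overall skeleton while handling the decisive step by a genuinely different mechanism. Like the paper, you form the unique rational matrix taking the initial edge of $\gamma$ to that of $\delta$ --- your $g=QP^{-1}$ is precisely the paper's $BA^{-1}=\tfrac{1}{R}U$ --- and you propagate it along the whole path using the shared recurrence of Lemma~\ref{lemmak} (the paper encodes the same propagation in the continuant-type matrices $\Lambda_i$). Where you part ways is in upgrading $g$ from $\textnormal{GL}_2(\mathbb{Q})$ to $\textnormal{SL}_2(\mathbb{Z})$: the paper chooses $X\in\textnormal{SL}_2(\mathbb{Z})$ putting $U$ into lower-triangular Hermite-type form with diagonal entries $r,t>0$ and subdiagonal entry $0\leq s<t$, then applies minimality twice, coordinatewise --- the gcd of the $a_i'$ forces $R\mid r$ and $s=0$, and the gcd of the $b_i'$ then forces $r=t=R$, so $U=RX$ and $\delta=X\gamma$. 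You instead read minimality lattice-theoretically, as the statement that the vertices of $\gamma$ generate all of $\mathbb{Z}^2$ (gcd of the $2\times 2$ minors $=$ product of the two invariant factors $=$ index of the column lattice), so that $g(\mathbb{Z}^2)\subseteq\mathbb{Z}^2$ yields integrality in one stroke, with $\det g=1$ read off from $\det P=\det Q=R$. Your route is more conceptual: it isolates exactly where minimality enters, and it makes transparent your closing observation that only the minimality of $\gamma$ is needed --- which is equally true, though less conspicuous, of the paper's proof, where only the gcds of the $a_i'$ and $b_i'$ are invoked. What the paper's computation buys is explicitness (the equivalence is produced as a concrete normal-form factor $X$) and consistency with the Smith/Hermite technology reused elsewhere, for instance in the surjectivity half of Theorem~\ref{theoremA}. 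One small repair to your write-up: when $\mathcal{I}$ is infinite, the Smith normal form should be applied to a finite submatrix --- choose finitely many vertices whose minors already realise gcd $1$ --- or, more directly, observe that every minor $a_jb_i-b_ja_i$ is divisible by the index of the sublattice the vertices generate, so minimality forces that index to be $1$; with that adjustment your argument is complete.
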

\begin{proof}
Let $\gamma$ have vertices $a_i/b_i$ and $\delta$ have vertices $c_i/d_i$, for $i\in \mathcal{I}$. We define
\[
A=\begin{pmatrix}a_0&a_1\\ b_0 & b_1\end{pmatrix}, \quad B=\begin{pmatrix}c_0&c_1\\ d_0 & d_1\end{pmatrix}, \quad\text{and}\quad U=\begin{pmatrix}c_0&c_1\\ d_0 & d_1\end{pmatrix}\setlength{\arraycolsep}{2pt}\begin{pmatrix}b_1&-a_1\\ -b_0 &a_0\end{pmatrix}.
\]
Then $BA^{-1}=\frac{1}{R}U$ and $\det U=R^2$. Let us choose $X\in\text{SL}_2(\mathbb{Z})$ such that
\[
UX^{-1} =\begin{pmatrix}r & 0 \\ s & t\end{pmatrix},
\]
where $r,t>0$ and $0\leq s <t$. Let $\gamma'$ be the path $X\gamma$ with vertices $a_i'/b_i'$, which has the same itinerary as $\gamma$ and is also minimal, and let
\[
A'=\begin{pmatrix}a_0'&a_1'\\ b_0' & b_1'\end{pmatrix}.
\]
Then we have
\[
B = (BA^{-1})A=\frac{1}{R}UA=\frac{1}{R}\begin{pmatrix}r & 0 \\ s & t\end{pmatrix}A'.
\] 
Now, let $(\lambda_i)_{i\in\mathcal{I}^*}$ be the itinerary of $\gamma$ and $\delta$. Let $\Lambda_0=I$ and
\[
\Lambda_i= \begin{pmatrix}0 & -1 \\ 1 & \lambda_1\end{pmatrix}\begin{pmatrix}0 & -1 \\ 1 & \lambda_2\end{pmatrix}\dotsb\begin{pmatrix}0 & -1 \\ 1 & \lambda_i\end{pmatrix},
\]
for $i> 0$ (with a  similar formula for $i<0$). Then 
\[
\begin{pmatrix}a_{i}'&a_{i+1}'\\ b_{i}' & b_{i+1}'\end{pmatrix}=A'\Lambda_i\quad\text{and}\quad\begin{pmatrix}c_{i}&c_{i+1}\\ d_{i} & d_{i+1}\end{pmatrix}=B\Lambda_i.
\]
So
\[
\begin{pmatrix}c_i\\ d_i\end{pmatrix}=\frac{1}{R}\begin{pmatrix}r & 0 \\ s & t\end{pmatrix}\begin{pmatrix}a_i'\\ b_i'\end{pmatrix}.
\]
Therefore $c_i=a_i'r/R$, for $i\in \mathcal{I}$. Since $\gamma'$ is minimal, the greatest common divisor of the integers $a_i'$ must be 1. Hence $r$ is divisible by $R$; say, $r=kR$, where $k\geq 1$. Then $kt=R$. We have 
\[
d_i = \frac{1}{k}\mleft(\frac{sa_i'}{t}+b_i'\mright),\quad\text{for $i\in \mathcal{I}$.}
\]
Hence $sa_i'/t+b_i'$ is an integer, for each $i\in \mathcal{I}$. However, $0\leq s/t<1$, and since the greatest common divisor of the integers $a_i'$ is 1, it must be that $s=0$. Then $d_i=b_i'/k$, and since the greatest common divisor of the integers $b_i'$ is 1, we must have $k=1$. We conclude that $\gamma'$ and $\delta$ are equal, so $\gamma$ and $\delta$ are equivalent under the action of $\text{SL}_2(\mathbb{Z})$, as required.
\end{proof}

\section{Tame integer tilings}

We begin this section by adapting a well-known result from $\text{SL}_2$-tilings to $N$-tilings before establishing properties of tameness parameters of $N$-tilings that were promised in the introduction. Throughout, $N$ is a nonzero integer. 

The following lemma is elementary; it can be obtained quickly from \cite[Lemma~6.3]{ShVaZa2025}.
 
\begin{lemma}\label{lemL}
Consider any 3-by-3 integer matrix
\[
A=\begin{pmatrix}a&b&c\\ d&e&f\\ g&h&i\end{pmatrix}
\]
with determinant 0 for which
\[
\det\!\begin{pmatrix}a&b\\ d&e\end{pmatrix}=\det\!\begin{pmatrix}b&c\\ e&f\end{pmatrix}=\det\!\begin{pmatrix}d&e\\ g&h\end{pmatrix}=\det\!\begin{pmatrix}e&f\\ h&i\end{pmatrix}=N.
\]
Then
\(
d+f  = (\Delta/N) e,
\)
where $\Delta = af-cd=di-fg$.
\end{lemma}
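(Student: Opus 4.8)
The plan is to read the desired formula off the top two rows alone and to reserve the hypothesis $\det A=0$ for proving that the two expressions for $\Delta$ agree. Write $[u,v]$ for the determinant of the $2$-by-$2$ matrix with columns $u$ and $v$. The three columns of the top $2$-by-$3$ block, namely $v_1=(a,d)$, $v_2=(b,e)$, and $v_3=(c,f)$, lie in the plane and so are linearly dependent, and the given minors are exactly $[v_1,v_2]=ae-bd=N$, $[v_2,v_3]=bf-ce=N$, and $[v_1,v_3]=af-cd$. Since $[v_1,v_2]=N\neq0$, Cramer's rule writes $v_3=\lambda v_1+\mu v_2$ with $\lambda=[v_3,v_2]/[v_1,v_2]=-1$ and $\mu=[v_1,v_3]/[v_1,v_2]=(af-cd)/N$, that is, $v_1+v_3=\tfrac{af-cd}{N}v_2$. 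Reading off its two coordinates yields at once the target formula $d+f=\tfrac{af-cd}{N}e$ (with $\Delta=af-cd$) and the auxiliary relation $a+c=\tfrac{af-cd}{N}b$. Notably, this step uses neither $\det A=0$ nor the bottom row.

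I would then repeat the argument verbatim on the bottom block, with columns $(d,g)$, $(e,h)$, $(f,i)$ and pairwise minors $dh-eg=N$, $ei-fh=N$, and $di-fg$, obtaining $d+f=\tfrac{di-fg}{N}e$ together with $g+i=\tfrac{di-fg}{N}h$. Both candidate values $af-cd$ and $di-fg$ of $\Delta$ have now appeared, and the heart of the lemma is the identity $af-cd=di-fg$.

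To prove that identity I set $D=(af-cd)-(di-fg)$ and show $bD=eD=hD=0$. Comparing the two formulas for $d+f$ gives $eD=0$ immediately. Expanding $\det A=0$ along the first row and substituting $ei-fh=N$ and $dh-eg=N$ collapses the expansion to $(a+c)N=b(di-fg)$; combined with $a+c=\tfrac{af-cd}{N}b$ from the first step this becomes $b(af-cd)=b(di-fg)$, i.e.\ $bD=0$. Symmetrically, expanding $\det A=0$ along the third row and using $bf-ce=N$ and $ae-bd=N$ gives $(g+i)N=h(af-cd)$, which with $g+i=\tfrac{di-fg}{N}h$ yields $hD=0$. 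Finally, the middle column $(b,e,h)$ cannot be the zero vector, since $b=e=0$ would force $ae-bd=0\neq N$; hence at least one coefficient of $D$ is nonzero and $D=0$ follows.

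The main obstacle is organisational rather than computational: one must notice that the formula itself costs only the top block, so that the full force of $\det A=0$ can be spent entirely on the identity $af-cd=di-fg$, and then phrase that identity as the three scalar equations $bD=eD=hD=0$ in order to sidestep a case distinction on which of $b$, $e$, $h$ happens to vanish.
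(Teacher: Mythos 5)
Your proof is correct, and every step checks out: the Cramer's rule computation on the top block gives $v_1+v_3=\tfrac{af-cd}{N}v_2$ (using $N\neq 0$, which the paper assumes throughout Section~3), the cofactor expansions of $\det A=0$ along the first and third rows do collapse to $(a+c)N=b(di-fg)$ and $(g+i)N=h(af-cd)$ as you claim, and $(b,e,h)\neq(0,0,0)$ since $b=e=0$ would force $ae-bd=0\neq N$. The comparison with the paper is unusual here: the paper gives no proof at all, remarking only that the lemma ``is elementary; it can be obtained quickly from \cite[Lemma~6.3]{ShVaZa2025}'' --- the cited result is essentially the normalised version of this statement (minors equal to $1$), which one would invoke after dividing the middle row by $N$ to reduce all four minors to $1$ over $\mathbb{Q}$. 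Your argument instead proves everything from scratch, and it does so with a clean structure the citation route obscures: you isolate exactly where each hypothesis is spent, showing the displayed formula costs only the relevant 2-by-3 block, while $\det A=0$ is needed precisely for the implicit claim $af-cd=di-fg$ (which is genuinely part of the statement, since the lemma asserts both expressions equal $\Delta$). Phrasing that equality as $bD=eD=hD=0$ with $(b,e,h)$ nonzero is a tidy way to avoid a case split, and since $\mathbb{Z}$ is an integral domain the conclusion $D=0$ follows at once. In short: a complete, self-contained proof where the paper offers only a pointer.
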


The next theorem is similar to known results for $\text{SL}_2$-tilings (rather than $N$-tilings), notably \cite[Theorem~6.1]{ShVaZa2025}. We give a sketch proof.

\begin{theorem}\label{theorem19}
An $N$-tiling $\mathbf{M}$ is tame if and only if there are sequences $(r_i)_{i\in \mathcal{I}^*}$ and $(s_j)_{j\in \mathcal{J}^*}$ in $\frac{1}{N}\mathbb{Z}$ such that
\begin{align*}
m_{i-1,j}+m_{i+1,j} &=r_im_{i,j},\quad\text{for $i\in \mathcal{I}^*,j\in \mathcal{J}$,}\\
m_{i,j-1}+m_{i,j+1}&=s_jm_{i,j},\quad\text{for $i\in \mathcal{I},j\in \mathcal{J}^*$.}
\end{align*}
Furthermore, if $\mathbf{M}$ is tame, then $(r_i)_{i\in \mathcal{I}^*}$ and $(s_j)_{j\in \mathcal{J}^*}$ are the unique sequences of rationals satisfying these recurrence relations.
\end{theorem}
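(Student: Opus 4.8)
The plan is to prove the two directions separately, relying on Lemma~\ref{lemL} and on the structural analogy with the itinerary result of Lemma~\ref{lemmak}.

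For the \emph{if} direction, suppose such sequences $(r_i)$ and $(s_j)$ exist. I would show directly that every $3$-by-$3$ subblock has determinant $0$. Take a $3$-by-$3$ subblock with rows indexed $i-1,i,i+1$ and columns $j-1,j,j+1$. The relation $m_{i-1,\ell}+m_{i+1,\ell}=r_i m_{i,\ell}$ holds for every column index $\ell$ (this is the content of the first displayed recurrence, which is column-independent). Hence the first and third rows of the subblock sum to $r_i$ times the middle row: that is, $\text{(row $i-1$)}+\text{(row $i+1$)}=r_i\,\text{(row $i$)}$, a linear dependence among the three rows. A matrix with linearly dependent rows has determinant $0$, so $\mathbf{M}$ is tame. (I would note that one only needs the column recurrence \emph{or} the row recurrence for this half; having both is the natural symmetric hypothesis.)

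For the \emph{only if} direction, assume $\mathbf{M}$ is tame and produce the sequences. Fix $i\in\mathcal{I}^*$. The idea is to apply Lemma~\ref{lemL} to each $3$-by-$3$ subblock straddling rows $i-1,i,i+1$. For such a block with columns $j,j+1,j+2$, Lemma~\ref{lemL} (with the $N$-tiling condition supplying the four $2$-by-$2$ minors equal to $N$, and tameness supplying determinant $0$) gives $m_{i-1,j+1}+m_{i+1,j+1}=(\Delta_j/N)\,m_{i,j+1}$, where $\Delta_j=m_{i-1,j}m_{i+1,j+1}-m_{i-1,j+1}m_{i+1,j}$. So I \emph{define} $r_i=\Delta_j/N$ using one such block. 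Lemma~\ref{lemL} also asserts $\Delta=af-cd=di-fg$, i.e.\ the value $\Delta_j$ computed from the left pair of columns equals that from the right pair; this is precisely the ingredient that lets me propagate the constant $r_i$ across all columns and conclude $r_i$ is independent of $j$. The analogous argument on $3$-by-$3$ blocks straddling columns $j-1,j,j+1$ produces $s_j$. That $r_i,s_j\in\tfrac1N\mathbb{Z}$ is immediate since $r_i=\Delta_i/N$ with $\Delta_i\in\mathbb{Z}$.

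\textbf{The main obstacle} is establishing that $r_i$ is genuinely well-defined, i.e.\ independent of the column index $j$ used to compute it (and symmetrically for $s_j$). The cleanest route is to show the recurrence $m_{i-1,\ell}+m_{i+1,\ell}=r_i m_{i,\ell}$ holds with a \emph{single} constant $r_i$ for all $\ell$. One subtlety: at a column $\ell$ where $m_{i,\ell}=0$, the value $r_i$ is not pinned down by that column alone, so I must argue using a neighbouring column where $m_{i,\ell}\neq 0$ and then invoke the $\Delta$-equality from Lemma~\ref{lemL} to transport the relation across the degenerate column. Since $N\neq 0$, the middle row of any $N$-tiling cannot vanish on two consecutive entries (else the relevant $2$-by-$2$ minor would be $0\neq N$), which guarantees enough nonzero anchors to fix $r_i$ consistently. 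Finally, uniqueness of $(r_i)$ and $(s_j)$ as rational sequences follows because, at any column with $m_{i,\ell}\neq0$, the equation $m_{i-1,\ell}+m_{i+1,\ell}=r_i m_{i,\ell}$ determines $r_i$ uniquely; this mirrors the uniqueness clause in Lemma~\ref{lemmak}, and I would phrase it the same way.
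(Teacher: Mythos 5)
Your proposal is correct and takes essentially the same route as the paper: both directions rest on Lemma~\ref{lemL}, whose two expressions for $\Delta$ propagate a single constant along the tiling (you slide the $3$-by-$3$ window across columns to define $r_i$, while the paper slides it across rows to define $s_j$, which is the same argument applied to transposed subblocks), and the converse is the identical linear-dependence-of-rows observation. Your extra care about columns where $m_{i,\ell}=0$ is needed only for the uniqueness clause---the $\Delta$-based definition already makes existence of $r_i$ unambiguous---and your no-two-consecutive-zeros observation correctly supplies what the paper dismisses as ``immediate''.
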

\begin{proof}
Suppose that $\mathbf{M}$ is tame. Then, by Lemma~\ref{lemL}, we have
\[
m_{i,j-1}+m_{i,j+1}  = \frac{\Delta_{i,j}}{N}m_{i,j},\quad \text{for $i\in \mathcal{I}^*,j\in \mathcal{J}^*$,}
\]
where $\Delta_{i,j} = m_{i-1,j-1}m_{i,j+1}-m_{i-1,j+1}m_{i,j-1}=m_{i,j-1}m_{i+1,j+1}-m_{i,j+1}m_{i+1,j-1}$. This last pair of equations show that, for any $j\in \mathcal{J}^*$, we have $\Delta_{i,j}=\Delta_{i',j}$, for all
$i,i'\in \mathcal{I}^*$. Consequently, there is $s_j\in \frac{1}{N}\mathbb{Z}$ with
\[
m_{i,j-1}+m_{i,j+1}=s_jm_{i,j}, \quad\text{for $i\in \mathcal{I}^*$,}
\]
as required. A similar argument gives the existence of the sequence $(r_i)_{i\in \mathcal{I}^*}$.

For the converse, the recurrence relations imply that any three rows (or columns) of a 3-by-3 subblock $A$ of $\mathbf{M}$ are linearly dependent, so $\det A=0$. Hence $\mathbf{M}$ is tame. The uniqueness assertion of the theorem is immediate.
\end{proof}
	
The next lemma shows that the vertical and horizontal tameness parameters are well defined (they do not depend on the pair of consecutive rows or columns chosen).

\begin{lemma}\label{lemmaG}
Let $\mathbf{M}$ be a tame $N$-tiling. For any indices $j,j'\in \mathcal{J}$, the integer
\[
m_{i,j}m_{i+1,j'} - m_{i,j'}m_{i+1,j}
\]
is independent of $i$. Similarly, for $i,i'\in \mathcal{I}$,  $m_{i,j}m_{i',j+1} - m_{i',j}m_{i,j+1}$ is independent of $j$.
\end{lemma}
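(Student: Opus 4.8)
The plan is to exploit Theorem~\ref{theorem19}, whose crucial feature is that the vertical recurrence coefficient $r_i$ does \emph{not} depend on the column index. First I would fix columns $j$ and $j'$ and view the two columns $(m_{i,j})_i$ and $(m_{i,j'})_i$ as two sequences satisfying one and the same second-order linear recurrence
\[
m_{i-1,j}+m_{i+1,j}=r_i m_{i,j},\qquad m_{i-1,j'}+m_{i+1,j'}=r_i m_{i,j'},
\]
valid for $i\in\mathcal{I}^*$. The quantity in the statement, $W_i := m_{i,j}m_{i+1,j'}-m_{i,j'}m_{i+1,j}$, is precisely the discrete Wronskian (Casoratian) of these two solutions, and the standard fact about such recurrences is that the Casoratian is constant.

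To prove constancy concretely I would use the recurrence to eliminate the index $i+1$, writing $m_{i+1,j}=r_i m_{i,j}-m_{i-1,j}$ and likewise for $j'$, substitute into $W_i$, and observe that the two $r_i$ terms cancel because the coefficient is shared between the two columns. What remains is exactly $m_{i-1,j}m_{i,j'}-m_{i-1,j'}m_{i,j}=W_{i-1}$, so $W_i=W_{i-1}$ for every $i\in\mathcal{I}^*$. Chaining these equalities across the (connected) index set then shows $W_i$ takes a single value for all admissible $i$.

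The symmetric assertion for rows follows by the identical argument applied to the horizontal recurrence with coefficients $s_j$, which Theorem~\ref{theorem19} guarantees are independent of the row index.

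I do not expect a genuine obstacle here; the only points requiring care are bookkeeping ones. The cancellation works even though $r_i$ lies only in $\tfrac{1}{N}\mathbb{Z}$ rather than $\mathbb{Z}$, since the $r_i$ contributions vanish identically and $W_i$ is manifestly an integer (a $2$-by-$2$ minor of an integer matrix). One must also check that the index ranges match up: the recurrence, hence the step $W_i=W_{i-1}$, is only asserted for $i\in\mathcal{I}^*$, but this is exactly the range needed to connect all the values $W_i$ for which both $i$ and $i+1$ lie in $\mathcal{I}$, so no gap appears.
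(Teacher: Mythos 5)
Your proof is correct, and it takes a genuinely different route from the paper's. Both arguments rest on Theorem~\ref{theorem19}, but you use the recurrence running \emph{parallel} to the varying index: the columns $(m_{i,j})_i$ and $(m_{i,j'})_i$ satisfy one and the same vertical recurrence with shared coefficients $r_i$, so their Casoratian $W_i=m_{i,j}m_{i+1,j'}-m_{i,j'}m_{i+1,j}$ is constant -- the substitution $m_{i+1,\,\cdot}=r_im_{i,\,\cdot}-m_{i-1,\,\cdot}$ cancels the $r_i$ terms exactly as you say, and the step $W_i=W_{i-1}$ is available precisely for $i\in\mathcal{I}^*$, which suffices to chain together all defined values since $\mathcal{I}$ is an interval. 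The paper instead uses the \emph{transverse} (horizontal) recurrence: assuming $j<j'$, it writes the column pair $(j',j'+1)$ as $U$ times the pair $(j,j+1)$, where $U$ is the product of the companion matrices $\bigl(\begin{smallmatrix}0&1\\-1&s_t\end{smallmatrix}\bigr)$ for $t=j+1,\dots,j'$, and then applies the identity $AJA^{T}=NJ$, valid for any $2$-by-$2$ integer matrix $A$ of determinant $N$, to show that the $2$-by-$2$ matrix whose entries are the cross-minors of the two column pairs equals $NUJ$, which is visibly free of $i$. Your argument is shorter and more elementary (the standard constancy of the discrete Wronskian for two solutions of a common three-term recurrence), and it quietly avoids the paper's tacit use of the column $j'+1$, which requires $j'$ not to be the greatest element of $\mathcal{J}$ (a harmless edge case the paper leaves implicit). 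What the paper's computation buys in exchange is a little more information -- the formula $NUJ$ evaluates all four minors formed from the column pairs $(j,j+1)$ and $(j',j'+1)$ at once -- and it rehearses the symplectic-style identities ($U^TJU=SJ$ and relatives) that reappear in the injectivity step of the proof of Theorem~\ref{theoremA}. Your bookkeeping points (the $r_i$ lie only in $\tfrac{1}{N}\mathbb{Z}$ but cancel identically, and the index ranges match) are exactly the right ones, and the row case follows by the symmetric argument as you indicate.
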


The minor $m_{i,j}m_{i+1,j'} - m_{i,j'}m_{i+1,j}$ is illustrated below.

\begin{center}
\begin{tikzpicture}
\fill[rounded corners=2pt,fill=gammacol,fill opacity=0.3,xshift=0pt] (-2.5,-0.5) rectangle (-1.2,0.5);
\fill[rounded corners=2pt,fill=gammacol,fill opacity=0.3,xshift=105pt] (-2.5,-0.5) rectangle (-1.2,0.5);
\node at (0,0) {
\(
\begin{matrix} 
m_{i,j} & \cdot & \cdot &\cdot &\cdot &\cdot & m_{i,j'}\\
m_{i+1,j} & \cdot & \cdot &\cdot &\cdot &\cdot & m_{i+1,j'}
\end{matrix}
\)
};
\end{tikzpicture}	
\end{center}

\begin{proof}
Let us assume that $j<j'$. By Theorem~\ref{theorem19}, we have
\[
\begin{pmatrix}m_{i,j'} & m_{i+1,j'}\\ m_{i,j'+1} & m_{i+1,j'+1}\end{pmatrix}= U\begin{pmatrix}m_{i,j} & m_{i+1,j}\\ m_{i,j+1} & m_{i+1,j+1}\end{pmatrix},
\]
where 
\[
U=\begin{pmatrix}0 & 1 \\ -1 & s_{j'}\end{pmatrix}\begin{pmatrix}0 & 1 \\ -1 & s_{j'-1}\end{pmatrix}\dotsb\begin{pmatrix}0 & 1 \\ -1 & s_{j+1}\end{pmatrix}.
\]
Let 
\[
J=\begin{pmatrix*}0 & 1\\ -1 & 0\end{pmatrix*}.
\]
Observe that if $A$ is a 2-by-2 matrix of determinant $N$, then $AJA^T=NJ$. Now, the integer $m_{i,j}m_{i+1,j'} - m_{i,j'}m_{i+1,j}$ is the negative of the top-left entry of the matrix
\[
\begin{pmatrix}m_{i,j'} & m_{i+1,j'}\\ m_{i,j'+1} & m_{i+1,j'+1}\end{pmatrix} J\begin{pmatrix}m_{i,j} & m_{i+1,j}\\ m_{i,j+1} & m_{i+1,j+1}\end{pmatrix}^{\!T}.
\]
However, this matrix is equal to
\[
U\begin{pmatrix}m_{i,j} & m_{i+1,j}\\ m_{i,j+1} & m_{i+1,j+1}\end{pmatrix} J\begin{pmatrix}m_{i,j} & m_{i+1,j}\\ m_{i,j+1} & m_{i+1,j+1}\end{pmatrix}^{\!T}=NUJ.
\]
Since $NUJ$ is independent of $i$, we see that $m_{i,j}m_{i+1,j'} - m_{i,j'}m_{i+1,j}$ is independent of $i$, as required.
\end{proof}

For the remainder of this section we write 
\(
\Delta(i,i';j,j')=m_{i,j}m_{i',j'}-m_{i,j'}m_{i',j}.
\) 

\begin{lemma}\label{lemmaQ}
Let $\mathbf{M}$ be a tame $N$-tiling. Let $i<i'$ and $j<j'$. Then
\[
N\Delta(i,i';j,j')= \Delta(i,i+1;j,j')\Delta(i,i';j,j+1).
\]
\end{lemma}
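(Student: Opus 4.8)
The plan is to exploit the fact that tameness forces the rows of $\mathbf{M}$ to satisfy a second-order linear recurrence, so that every row lies in the rational span of any two consecutive rows. Concretely, by Theorem~\ref{theorem19} there is a sequence $(r_i)$ with $m_{i-1,k}+m_{i+1,k}=r_i m_{i,k}$ for every column $k$, and iterating this recurrence expresses row $i'$ as a $\mathbb{Q}$-linear combination $\alpha\,(\text{row }i)+\beta\,(\text{row }i{+}1)$, where the coefficients $\alpha,\beta$ depend only on $i,i'$ (through the $r$'s) and not on the column index $k$. The pair of consecutive rows $i,i{+}1$ is linearly independent because the $2$-by-$2$ subblock on columns $j,j{+}1$ has determinant $\Delta(i,i{+}1;j,j{+}1)=N\neq0$, so this expansion is legitimate; since $i<i'$ lie in an interval of consecutive integers, every instance of the recurrence used has interior index and is valid.

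First I would substitute $m_{i',k}=\alpha m_{i,k}+\beta m_{i+1,k}$ into the defining determinant of $\Delta(i,i';j,j')$ and expand by multilinearity in the second row. The term carrying $\alpha$ is the determinant of a matrix with two equal rows and hence vanishes, leaving
\[
\Delta(i,i';j,j')=\beta\,\Delta(i,i{+}1;j,j').
\]
The point is that a minor formed from rows $i$ and $i'$ only sees the component of row $i'$ transverse to row $i$, namely the $\beta$-multiple of row $i{+}1$.

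Next I would apply exactly the same identity with the column pair $j,j{+}1$ in place of $j,j'$, which gives $\Delta(i,i';j,j{+}1)=\beta\,\Delta(i,i{+}1;j,j{+}1)=\beta N$, using the $N$-tiling condition for the last equality. Hence $\beta=\Delta(i,i';j,j{+}1)/N$, and substituting this back yields
\[
N\,\Delta(i,i';j,j')=\Delta(i,i{+}1;j,j')\,\Delta(i,i';j,j{+}1),
\]
as required.

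The only genuine obstacle is justifying the rank-two expansion cleanly: one must verify that row $i'$ really lies in the span of rows $i$ and $i{+}1$ with a \emph{column-independent} coefficient $\beta$, which is precisely the content of the row recurrence in Theorem~\ref{theorem19}, and that the two consecutive rows are independent, which follows from $N\neq0$. After that the argument is pure multilinear algebra. An equivalent route avoids naming $\alpha,\beta$ and instead inducts on $i'$: both $\Delta(i,i';j,j')$ and $\Delta(i,i';j,j{+}1)$ satisfy the same recurrence $x_{i'+1}=r_{i'}x_{i'}-x_{i'-1}$ in $i'$ (derived from Theorem~\ref{theorem19}), and they share initial data at $i'=i$ and $i'=i{+}1$ up to the factor $\Delta(i,i{+}1;j,j')/N$, so they agree for all $i'$.
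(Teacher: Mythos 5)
Your proof is correct, but it is organised differently from the paper's. The paper proves the identity by induction on $(i'-i)+(j'-j)$: the base case is the $N$-tiling condition, and the inductive step uses the recurrence of Theorem~\ref{theorem19} to expand $\Delta(i,i';j,j'+1)=\lambda\,\Delta(i,i';j,j')-\Delta(i,i';j,j'-1)$ in the column direction (with a symmetric, omitted step in the row direction), so it needs the recurrences in \emph{both} directions. You instead make a single structural observation: iterating the row recurrence writes row $i'$ as $\alpha\,(\text{row }i)+\beta\,(\text{row }i{+}1)$ with column-independent rational coefficients, after which multilinearity kills the $\alpha$-term in any minor on rows $i,i'$, giving $\Delta(i,i';j,j')=\beta\,\Delta(i,i{+}1;j,j')$ and, specialising the columns to $j,j{+}1$, the value $\beta=\Delta(i,i';j,j{+}1)/N$. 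This is cleaner in two respects: it uses the recurrence in only one direction, and it replaces the double induction by one trivial iteration plus linear algebra; it also makes transparent \emph{why} the identity holds (the tiling has rank two, and minors on rows $i,i'$ only see the component of row $i'$ along row $i{+}1$). One small inaccuracy: the linear independence of rows $i$ and $i{+}1$ is not actually needed to legitimise the expansion, since the expression for row $i'$ is produced constructively by the recurrence rather than by solving for coordinates in a basis -- though the remark is harmless. Your careful note that all indices used in the recurrence are interior to the interval $\mathcal{I}$ is exactly the right check. Your closing alternative (inducting on $i'$ alone, since both sides satisfy the same three-term recurrence) is essentially one half of the paper's argument and also works.
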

The minors $\Delta(i,i';j,j')$, $\Delta(i,i+1;j,j')$, and $\Delta(i,i';j,j+1)$ are illustrated below.

\begin{center}
\begin{tikzpicture}
\fill[rounded corners=2pt,fill=deltacol,fill opacity=0.3,yshift=0] (-3.15,1) rectangle (-0.5,1.5);
\fill[rounded corners=2pt,fill=deltacol,fill opacity=0.3,yshift=-2.55cm] (-3.15,1) rectangle (-0.5,1.5);
\fill[rounded corners=2pt,fill=alphacol,fill opacity=0.3,xshift=0] (-3.15,0.6) rectangle (-2,1.5);
\fill[rounded corners=2pt,fill=alphacol,fill opacity=0.3,xshift=5.05cm] (-3.15,0.6) rectangle (-2,1.5);
\draw[rounded corners=2pt,xshift=0] (-3.15,1) rectangle (-2,1.5);
\draw[rounded corners=2pt,xshift=5.05cm] (-3.15,1) rectangle (-2,1.5);
\draw[rounded corners=2pt,yshift=-2.55cm] (-3.15,1) rectangle (-2,1.5);
\draw[rounded corners=2pt,xshift=5.05cm,yshift=-2.55cm] (-3.15,1) rectangle (-2,1.5);
\node at (0,0) {
\(
\begin{matrix} 
m_{i,j} & m_{i,j+1} & \cdot & \cdot &\cdot &\cdot &\cdot & m_{i,j'}\\
m_{i+1,j} & \cdot & \cdot & \cdot &\cdot &\cdot &\cdot & m_{i+1,j'}\\
\cdot & \cdot & \cdot & \cdot &\cdot &\cdot &\cdot & \cdot \\
\cdot & \cdot & \cdot & \cdot &\cdot &\cdot &\cdot & \cdot \\
\cdot & \cdot & \cdot & \cdot &\cdot &\cdot &\cdot & \cdot \\
\cdot & \cdot & \cdot & \cdot &\cdot &\cdot &\cdot & \cdot \\
m_{i',j} & m_{i',j+1} & \cdot & \cdot &\cdot &\cdot &\cdot & m_{i',j'}
\end{matrix}
\)
};
\end{tikzpicture}	
\end{center}

\begin{proof}
We prove the result by induction on $n=(i'-i)+(j'-j)$, for $n=2,3,\dotsc$. When $n=2$ we must have $i'=i+1$ and $j'=j+1$, in which case the result follows immediately.

Suppose that $N\Delta(i,i';j,t)= \Delta(i,i+1;j,t)\Delta(i,i';j,j+1)$ for some values $i<i'$ and $t=j+1,j+2,\dots,j'$. We will prove that $N\Delta(i,i';j,j'+1)= \Delta(i,i+1;j,j'+1)\Delta(i,i';j,j+1)$. A
similar argument (which we omit) shows that $N\Delta(i,i'+1;j,j')= \Delta(i,i+1;j,j')\Delta(i,i'+1;j,j+1)$. Together these provide the inductive step that establishes the result.

Observe that $m_{t,j'+1}=\lambda m_{t,j'}-m_{t,j'-1}$, for $t=i,i+1,\dots,i'$, where $\lambda=s_j'$ from Theorem~\ref{theorem19}.
Then 
\begin{align*}
\Delta(i,i';j,j'+1) &=m_{i,j}m_{i',j'+1}-m_{i,j'+1}m_{i',j}\\
&=m_{i,j}(\lambda m_{i',j'}-m_{i',j'-1})-(\lambda m_{i,j'}-m_{i,j'-1})m_{i',j}\\
&= \lambda \Delta(i,i';j,j')-\Delta(i,i';j,j'-1).  
\end{align*}
Hence
\begin{align*}
&\Delta(i,i+1;j,j'+1)\Delta(i,i';j,j+1)\\
&\qquad\hspace*{44pt}= (\lambda \Delta(i,i+1;j,j')-\Delta(i,i+1;j,j'-1))\Delta(i,i';j,j+1)\\
&\qquad\hspace*{44pt}= \lambda N\Delta(i,i';j,j')-N\Delta(i,i';j,j'-1)\\
&\qquad\hspace*{44pt}=N\Delta(i,i';j,j'+1),
\end{align*}
as required.
\end{proof}
	
We can now prove a result about tameness parameters promised in the introduction. To state this theorem, it is helpful to define the \emph{total tameness parameter} $T$ of a tame $N$-tiling $\mathbf{M}$ to be the
quotient of $N$ by the greatest common divisor of all the minors of $\mathbf{M}$.

\begin{theorem}\label{theoremV}
Let $\mathbf{M}$ be a tame $N$-tiling with vertical tameness parameter $R$, horizontal tameness parameter $S$, and total tameness parameter $T$. Then $RS=T$.
\end{theorem}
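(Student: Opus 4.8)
The plan is to reduce the statement to a single divisibility identity. Writing $r$, $s$, $t$ for the (positive) greatest common divisors of, respectively, the consecutive-column minors, the consecutive-row minors, and all the minors of $\mathbf{M}$, we have $R=N/r$, $S=N/s$, and $T=N/t$ by definition. Hence $RS=T$ is equivalent to $N^2/(rs)=N/t$, that is, to $rs=Nt$ (for positive $N$; the sign convention of the introduction will handle $N<0$). So the whole theorem comes down to proving $rs = Nt$.

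First I would record what Lemmas~\ref{lemmaG} and~\ref{lemmaQ} give. By Lemma~\ref{lemmaG} the consecutive-column minor $\Delta(i,i';j,j+1)$ does not depend on $j$; call it $\rho(i,i')$. Likewise the consecutive-row minor $\Delta(i,i+1;j,j')$ does not depend on $i$; call it $\sigma(j,j')$. Then $r=\gcd_{i<i'}|\rho(i,i')|$ and $s=\gcd_{j<j'}|\sigma(j,j')|$, while $t=\gcd_{i<i',\,j<j'}|\Delta(i,i';j,j')|$ (minors with a repeated row or column vanish and do not affect the gcd). With this notation, Lemma~\ref{lemmaQ} applied for $i<i'$ and $j<j'$ becomes the factorisation
\[
N\,\Delta(i,i';j,j')=\Delta(i,i+1;j,j')\,\Delta(i,i';j,j+1)=\sigma(j,j')\,\rho(i,i'),
\]
the crucial point being that each full minor is, up to the factor $N$, a product of a single column factor and a single row factor drawn from independent index families.

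Taking the greatest common divisor of absolute values over all $i<i'$ and $j<j'$ then yields
\[
|N|\,t=\gcd_{i<i',\,j<j'}\bigl|\sigma(j,j')\,\rho(i,i')\bigr|=\Bigl(\gcd_{j<j'}|\sigma(j,j')|\Bigr)\Bigl(\gcd_{i<i'}|\rho(i,i')|\Bigr)=s\,r,
\]
which is precisely $rs=Nt$, and so $RS=T$. The one step needing care, and the main (though modest) obstacle, is the middle equality: that the gcd of the products $\sigma(j,j')\rho(i,i')$ over the two index families taken independently equals the product of the two separate gcds. This is a Bézout argument. On the one hand $sr$ divides every product. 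On the other hand, choosing integer coefficients with $\sum_{j<j'} u_{jj'}\sigma(j,j')=s$ and $\sum_{i<i'} v_{ii'}\rho(i,i')=r$ (finitely many suffice) exhibits
\[
sr=\sum_{i<i',\,j<j'} u_{jj'}v_{ii'}\,\sigma(j,j')\,\rho(i,i')
\]
as an integer combination of the products, so the gcd of the products divides $sr$; the two divisibilities give equality. Finally, for negative $N$ the replacements of $r$, $s$, $t$ by their negatives keep $R$, $S$, $T$ positive, and the identity $|N|\,t=rs$ established above still delivers $RS=N^2/(rs)=N/t=T$.
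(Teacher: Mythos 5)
Your proof is correct and takes essentially the same route as the paper: both reduce the theorem to the identity $rs=Nt$ and derive it from the factorisation $N\Delta(i,i';j,j')=\Delta(i,i+1;j,j')\,\Delta(i,i';j,j+1)$ supplied by Lemmas~\ref{lemmaG} and~\ref{lemmaQ}. The only (cosmetic) difference is that you establish the key gcd step in one stroke, proving $\gcd$ of the products equals the product of the gcds via an explicit B\'ezout combination, whereas the paper obtains the two divisibilities $rs\mid Nt$ and $Nt\mid rs$ by varying the index pairs $(i,i')$ and $(j,j')$ in two successive stages.
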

\begin{proof}
Let $r$ be the greatest common divisor of the minors from two consecutive columns of $\mathbf{M}$. By Lemma~\ref{lemmaG}, this definition does not depend on which pair of columns we choose. Let $s$ be the greatest common divisor of the minors from two consecutive rows of $\mathbf{M}$. Let $t$ be the greatest common divisor of all minors of $\mathbf{M}$. We will prove that $rs=Nt$.

Observe that $\Delta(i,i+1;j,j')$ is divisible by $s$ and $\Delta(i,i';j,j+1)$ is divisible by $r$, for $i<i'$ and $j<j'$. By Lemma~\ref{lemmaQ}, $N\Delta(i,i';j,j')$ is divisible by $rs$. Hence $Nt$ is
divisible by $rs$.

Conversely, applying Lemma~\ref{lemmaQ} again, we see that $\Delta(i,i+1;j,j')\Delta(i,i';j,j+1)$ is divisible by $Nt$, for all $i<i'$ and $j<j'$. Remember from Lemma~\ref{lemmaG} that $\Delta(i,i+1;j,j')$ is
independent of $i$. Thus, by keeping $j$ and $j'$ fixed and varying $i$ and $i'$, we see that $r\Delta(i,i+1;j,j')$ is divisible by $Nt$. Then by varying $j$ and $j'$ we see that $rs$ is divisible
by $Nt$, as required.
\end{proof}

\section{Proof of Theorem~\ref{theoremA}}\label{section83}

This section comprises a proof of Theorem~\ref{theoremA}. We choose nonzero integers $K$, $L$, $R$, $S$, and $N$ with $K,R,S>0$ and $N=K^2LRS$. 

Consider two minimal paths $\gamma=(a_i/b_i)_{i\in \mathcal{I}}$ and $\delta=(c_j/d_j)_{j\in \mathcal{J}}$ in $\mathscr{F}_R$ and $\mathscr{F}_S$. Let $\mathbf{M}$ be the array with entries
\[
m_{i,j} =K(a_id_j-Lb_ic_j)= K\begin{pmatrix}a_i&b_i\end{pmatrix}J_L\begin{pmatrix}c_j\\d_j\end{pmatrix},\quad\text{where }
J_L = \begin{pmatrix}0&1\\ -L& 0\end{pmatrix}.
\]
Let us check that $\mathbf{M}$ is an $N$-tiling. We have
\[
\begin{pmatrix}
m_{i,j} & m_{i,j+1}\\
m_{i+1,j} & m_{i+1,j+1}
\end{pmatrix}
=K\begin{pmatrix}a_i&b_i\\a_{i+1}&b_{i+1} \end{pmatrix}J_L\begin{pmatrix}c_j & c_{j+1}\\d_j & d_{j+1}\end{pmatrix}.
\]
The determinant of the matrix product on the right is $K^2RLS$, which equals $N$. Hence $\mathbf{M}$ is indeed an $N$-tiling. Let us now prove that it is tame. By Lemma~\ref{lemmak}, the itinerary $(\lambda_i)_{i\in\mathcal{I}^*}$ of $\gamma$ satisfies $a_{i-1}+a_{i+1}=\lambda_ia_i$ and $b_{i-1}+b_{i+1}=\lambda_ib_i$, for $i\in \mathcal{I}^*$. Consequently, any three consecutive rows or columns of $\mathbf{M}$ are linearly dependent, so the determinant of any 3-by-3 subblock of $\mathbf{M}$ is 0.

Next, we need to show that the map is constant on $\Gamma^0(L)$ orbits of pairs of minimal paths. To see this, recall that with 
\[
A=\begin{pmatrix}a&bL\\ c & d\end{pmatrix}\in \Gamma^0(L) \quad\text{and}\quad B=\begin{pmatrix}a&b\\ cL & d\end{pmatrix} ,
\]
the action of $\Gamma^0(L)$ on $\mathscr{F}_R\times \mathscr{F}_S$ sends a pair $(x,y)$ to a pair $(x',y')$, where, regarding $x$, $y$, $x'$, and $y'$ as column vectors, we have $x'=Ax$ and $y'=By$. Then
\[
x'^TJ_Ly'  = x^TA^TJ_LBy = x^TJ_Ly,
\]
because $A^TJ_LB=J_L$, as one can easily verify. It follows that if two pairs of minimal paths $(\gamma,\delta)$ and $(\gamma',\delta')$ in $\mathscr{F}_R\times \mathscr{F}_S$ are $\Gamma^0(L)$ equivalent, then they
give rise to the same tame $N$-tiling $m_{i,j} =K(a_id_j-Lb_ic_j)$.

We now need to check that  $\mathbf{M}$ has tameness parameters $(K,L,R,S)$. Observe that, for $i\in \mathcal{I}$ and $j<j'$ in $\mathcal{J}$, 
\[
\begin{pmatrix}
m_{i,j} & m_{i,j'}\\
m_{i+1,j} & m_{i+1,j'}
\end{pmatrix}=K\begin{pmatrix}a_i&b_i\\a_{i+1}&b_{i+1} \end{pmatrix}J_L\begin{pmatrix}c_j & c_{j'}\\d_j & d_{j'}\end{pmatrix}.
\]
Taking determinants of both sides gives
\[
m_{i,j}m_{i+1,j'}-m_{i,j'}m_{i+1,j}=K^2LR(c_jd_{j'}-d_jc_{j'}).
\]
Since $\delta$ is minimal, the greatest common divisor of all the integers $c_jd_{j'}-d_jc_{j'}$ is 1. Hence, for $N>0$, the greatest common divisor of all the integers $m_{i,j}m_{i+1,j'}-m_{i,j'}m_{i+1,j}$ is $K^2LR$. Therefore the horizontal tameness parameter of $\mathbf{M}$ is $N/(K^2LR)=S$ -- and this also holds if $N<0$. Similarly, the vertical tameness parameter of $\mathbf{M}$ is $R$.

Next, suppose that all integers $a_id_j-Lb_ic_j$, for $i\in \mathcal{I},j\in \mathcal{J}$, are divisible by some prime number $p$. It cannot be that $L$ is divisible by $p$, for if that were so then either all $a_i$ would be divisible by $p$ or all $d_j$ would be divisible by $p$ and both those possibilities contradict minimality of one of $\gamma$ or $\delta$. From the matrix equation
\[
\frac{1}{K}
\begin{pmatrix}
m_{i,j} & m_{i,j'}\\
m_{i',j} & m_{i',j'}
\end{pmatrix}=\begin{pmatrix}a_i&b_i\\a_{i'}&b_{i'} \end{pmatrix}J_L\begin{pmatrix}c_j & c_{j'}\\d_j & d_{j'}\end{pmatrix}
\]
we see, by taking determinants, that $L(a_ib_{i'}-b_ia_{i'})(c_jd_{j'}-d_jc_{j'})$ is divisible by $p^2$, for all $i,i'\in \mathcal{I}$ and $j,j'\in \mathcal{J}$. It follows that all the minors of one of $\gamma$ or $\delta$ are divisible by $p$, which is a contradiction. Hence there are no nontrivial common factors of $a_id_j-Lb_ic_j$, for $i\in \mathcal{I},j\in \mathcal{J}$. Therefore the greatest common divisor of $\mathbf{M}$ is $K$. We have now proven that the map $m_{i,j}=K(a_id_j-Lb_ic_j)$ is well defined.

Our next task is to prove that the map is injective. Suppose that pairs of minimal paths $(\gamma,\delta)$ and $(\gamma',\delta')$ in $\mathscr{F}_R\times \mathscr{F}_S$ have the same image $\mathbf{M}$. Let
$\gamma$ have vertices $a_i/b_i$ and $\delta$ have vertices $c_j/d_j$. As usual we define $J=J_1$. Let
\[
U=\begin{pmatrix}
c_{0} & d_{0}\\
c_1 & d_1
\end{pmatrix}.
\]
Then $U^TJU=SJ$ and $J_LJJ_L^T=LJ$. Hence
\begin{align*}
\begin{pmatrix}
a_{i+1} & b_{i+1}
\end{pmatrix}
J
\begin{pmatrix}
a_{i-1} \\ b_{i-1}
\end{pmatrix}
&=\frac{1}{LS}\begin{pmatrix}
a_{i+1} & b_{i+1}
\end{pmatrix}
J_LU^TJUJ_L^T
\begin{pmatrix}
a_{i-1} \\ b_{i-1}
\end{pmatrix}\\
&=\frac{1}{KLS}\begin{pmatrix}
m_{i+1,0} & m_{i+1,1}
\end{pmatrix}
J
\begin{pmatrix}
m_{i-1,0} \\ m_{i-1,1}
\end{pmatrix}.
\end{align*}
Now, the itinerary of $\gamma$ is the sequence $(a_{i-1}b_{i+1}-b_{i-1}a_{i+1})/R$, for $i\in\mathcal{I}^*$, and because $(\gamma,\delta)$ and $(\gamma',\delta')$ have the same image $\mathbf{M}$, we see from the calculation above that $\gamma$ and $\gamma'$ have the same itineraries, as do $\delta$ and $\delta'$.

Next, by Theorem~\ref{theoremHD}, there exist matrices $A,B\in\text{SL}_2(\mathbb{Z})$ with $\gamma'=A\gamma$ and $\delta'=B\delta$. Looking at the first two vertices of the paths $\gamma$, $\gamma'$, $\delta$, and $\delta'$ only, we see that
\[
\begin{pmatrix}
m_{0,0} & m_{0,1}\\
m_{1,0} & m_{1,1}
\end{pmatrix}
=K\begin{pmatrix}a_0&b_0\\ a_1 & b_1\end{pmatrix}J_L \begin{pmatrix}c_0&c_1\\ d_0 & d_1\end{pmatrix}
=K\begin{pmatrix}a_0&b_0\\ a_1 & b_1\end{pmatrix}A^T J_L B\begin{pmatrix}c_0&c_1\\ d_0 & d_1\end{pmatrix}.
\]
Hence $A^TJ_LB=J_L$. Let 
\[
A=\begin{pmatrix}a&b\\ c & d\end{pmatrix}.
\]
Then $B=J_L^{-1}(A^T)^{-1}J_L$; that is, 
\[
B = \frac{1}{L}\begin{pmatrix}0&-1\\ L& 0\end{pmatrix}\begin{pmatrix}d&-c\\ -b & a\end{pmatrix}\begin{pmatrix}0&1\\ -L& 0\end{pmatrix}=\begin{pmatrix}a&b/L\\ cL& d\end{pmatrix}.
\]
It follows that $b$ is divisible by $L$ -- let us say $b=b'L$, for $b'\in\mathbb{Z}$ -- so 
\[
A=\begin{pmatrix}a&b'L\\ c & d\end{pmatrix}\quad\text{and}\quad B=\begin{pmatrix}a&b'\\ cL& d\end{pmatrix}.
\]
Hence $(\gamma',\delta')$ and $(\gamma,\delta)$ lie in the same $\Gamma^0(L)$ orbit, as required.
	
It remains to prove that the map described in Theorem~\ref{theoremA} is surjective. To this end, let $\mathbf{M}$ be a tame $N$-tiling with tameness parameters $(K,L,R,S)$. Assume for the moment that $\mathbf{M}$
is of finite size, a $k$-by-$\ell$ matrix indexed by $\mathcal{I}=[\alpha_\mathcal{I},\beta_\mathcal{I}]$ and $\mathcal{J}=[\alpha_\mathcal{J},\beta_\mathcal{J}]$, where $\beta_\mathcal{I}-\alpha_\mathcal{I}=k-1$ and $\beta_\mathcal{J}-\alpha_\mathcal{J}=\ell-1$ . Let $n$ be the lesser of $k$ and $\ell$. Using the Smith normal form of $\mathbf{M}$, we can find $A\in \text{GL}_k(\mathbb{Z})$ and $B\in \text{GL}_\ell(\mathbb{Z})$ with $\mathbf{M}=AXB$, where $X$ is the $k$-by-$\ell$ matrix with $X_{ii}=\lambda_i$, for $i=1,2,\dots,n$, and $X_{ij}=0$ otherwise. Here $\lambda_i=\mu_i/\mu_{i-1}$, where $\mu_i$ is the greatest common divisor of the collection of all $i\times i$ minors of $\mathbf{M}$ (and $\mu_0=1$). Since $\mathbf{M}$ is tame, $\lambda_i=0$, for $i\geq 3$. Also, $\lambda_1=K$. By Theorem~\ref{theoremV}, $\mu_2=N/(RS)$; hence $\mu_2=K^2L$. Therefore $\lambda_2=KL$. Now, 
\[
\begin{pmatrix}0 & 1 \\ -L & 0\end{pmatrix}=\begin{pmatrix}1 & 0 \\ 0 & L\end{pmatrix}J
\]
so we can adjust $B$ (let us also call the adjusted matrix $B$) to give $\mathbf{M}=KAUB$, where $U$ is the $k$-by-$\ell$ matrix 
\[
U=\begin{pmatrix}\phantom{-}0&1&0&\cdots &0\\ -L &0& 0 &&\\ \phantom{-}0& 0& 0&&\\ \phantom{-}\vdots & & & \ddots& \\ \phantom{-}0 &&&&0\end{pmatrix}.
\]
Let $a_i=A_{i,\alpha_\mathcal{I}}$ and $b_i=A_{i,\alpha_\mathcal{I}+1}$ (the first two columns of $A$), and let $c_j=B_{\alpha_\mathcal{J},j}$ and $d_j=B_{\alpha_\mathcal{J}+1,j}$ (the first two rows of $B$). Then from $\mathbf{M}=KAUB$ it follows that
\[
m_{i,j}=K(a_id_j-Lb_ic_j),\quad\text{for $i\in \mathcal{I}, j\in \mathcal{J}$}.
\]
We must show that $a_i/b_i$ and $c_j/d_j$ are minimal paths in $\mathscr{F}_R$ and $\mathscr{F}_S$. To see this, notice first that the greatest common divisor of the collection of all minors from the matrix
\[
\begin{pmatrix} a_{\alpha_\mathcal{I}} & a_{\alpha_\mathcal{I}+1} & \cdots & a_{\beta_\mathcal{I}} \\ b_{\alpha_\mathcal{I}} & b_{\alpha_\mathcal{I}+1} & \cdots & b_{\beta_\mathcal{I}}\end{pmatrix}
\]
must be 1, because these represent the first two columns of $A$, which lies in $\text{GL}_k(\mathbb{Z})$. Next, observe that, for any $j\in \mathcal{J}^*$,
\[
\hspace*{-7mm}\begin{pmatrix} m_{\alpha_\mathcal{I},j} & m_{\alpha_\mathcal{I}+1,j} & \cdots & m_{\beta_\mathcal{I},j} \\ m_{\alpha_\mathcal{I},j+1} & m_{\alpha_\mathcal{I}+1,j+1} & \cdots & m_{\beta_\mathcal{I},j+1}\end{pmatrix}^{\!T}
=K\begin{pmatrix} a_{\alpha_\mathcal{I}} & a_{\alpha_\mathcal{I}+1} & \cdots & a_{\beta_\mathcal{I}} \\ b_{\alpha_\mathcal{I}} & b_{\alpha_\mathcal{I}+1} & \cdots & b_{\beta_\mathcal{I}}\end{pmatrix}^{\!T}J_L\begin{pmatrix}c_j & c_{j+1} \\ d_j & d_{j+1}\end{pmatrix}.
\]
The greatest common divisor of the collection of all minors from the matrix on the left is $N/R$. Hence, by taking minors on the right, we see that
\[
c_jd_{j+1}-d_jc_{j+1} = \frac{N}{K^2LR}=S.
\]
Consequently, $c_j/d_j$ is a path in $\mathscr{F}_S$, and similarly $a_i/b_i$ is a path in $\mathscr{F}_R$. Both are minimal, as we have seen. This concludes the proof in the finite case.

Suppose now that $\mathbf{M}\colon \mathcal{I}\times \mathcal{J}\longrightarrow\mathbb{Z}$ is an infinite tame $N$-tiling. Let $\mathcal{I}_1\subset \mathcal{I}_2\subset\dotsb$ and $\mathcal{J}_1\subset \mathcal{J}_2\subset\dotsb$ be sequences of finite intervals with
limits $\mathcal{I}$ and $\mathcal{J}$, respectively, and define $\mathbf{M}_n$ to be the finite tame $N$-tiling that is the restriction of $\mathbf{M}$ to $\mathcal{I}_n\times \mathcal{J}_n$. The $N$-tiling $\mathbf{M}_n$ has tameness parameters $(K_n,
L_n,R_n,S_n)$, say, so we can find minimal paths $a^{(n)}_i/b^{(n)}_i$ and $c^{(n)}_j/d^{(n)}_j$ in $\mathscr{F}_R$ and $\mathscr{F}_S$ with
\[
m_{i,j} = K_n(a^{(n)}_id^{(n)}_j-L_nb^{(n)}_ic^{(n)}_j),\quad\text{for $i\in \mathcal{I}_n,j\in \mathcal{J}_n$.}
\]
Recall that $R_n$ is the quotient of $N$ by the greatest common divisor of two consecutive columns of $\mathbf{M}_n$. Hence the sequence $(R_n)$ is (not necessarily strictly) increasing, and so is $(S_n)$. Both $R_n$ and $S_n$ are divisors of $N$, so both sequences are eventually constant, with values $R$ and $S$, say. A similar argument shows that $(K_n)$ is a decreasing sequence, with limit $K$, say, and since $L_n=N/(K_n^2R_nS_n)$ we see that $(L_n)$ is eventually constant, with value $L$, say. Consequently, there exists an integer $n_0$ for which $(K_n, L_n,R_n,S_n)=(K,L,R,S)$ for $n\geq n_0$, and here $(K,L,R,S)$ are the tameness parameters of $\mathbf{M}$.

Now, for $n\geq n_0$, we have
\[
m_{i,j} = K(a^{(n)}_id^{(n)}_j-Lb^{(n)}_ic^{(n)}_j)= K(a^{(n+1)}_id^{(n+1)}_j-Lb^{(n+1)}_ic^{(n+1)}_j),
\]
for $i\in \mathcal{I}_n,j\in \mathcal{J}_n$. By the earlier injectivity argument we can see that the two pairs of paths $(a^{(n)}_i/b^{(n)}_i,c^{(n)}_j/d^{(n)}_j)$ and $(a^{(n+1)}_i/b^{(n+1)}_i,c^{(n+1)}_j/d^{(n+1)}_j)$ are
equivalent under $\Gamma^0(L)$. By applying an element of $\Gamma^0(L)$ we can ensure that in fact the latter sequence is equal to the former, for $i\in \mathcal{I}_n, j\in \mathcal{J}_n$. We then define $a_i/b_i=a^{(n)}_i/b^{(n)}_i$, for $n\geq n_0$, where $n$ is chosen to be sufficiently large that $i\in \mathcal{I}_n$. We define $c_j/d_j$ similarly. These two paths $a_i/b_i$ and $c_j/d_j$ are the required minimal paths in $\mathscr{F}_R$ and $\mathscr{F}_S$ to complete the proof.

\section{Positive integer tilings}\label{section14}

In this section we use Theorem~\ref{theoremA} to classify positive integer tilings in terms of pairs of paths. The approach is motivated by \cite[Theorem~1.2]{Sh2023} for $\text{SL}_2$-tilings; indeed, that theorem is the special case $N=1$ of Theorem~\ref{theorem73}, to follow. For convenience, we assume that $N$ is positive, which requires only a little loss of generality -- after all, reflecting an $N$-tiling in any row or column gives an $(-N)$-tiling.

We endow the extended real line $\mathbb{R}_\infty=\mathbb{R}\cup\{\infty\}$ with the cyclic order inherited from the unit circle by stereographic projection. With this cyclic order, a decreasing sequence of real numbers is a clockwise sequence, as is, for example, the sequence $-1$, $\infty$, $1$. Let us say that a path $\gamma$ in $\mathscr{F}_R$ is a \emph{clockwise path} if the sequence of vertices forms a clockwise sequence in $\mathbb{R}_\infty$ that does not complete a full cycle of $\mathbb{R}_\infty$. Here we have adopted the sleight of hand that a vertex $a/b$ in $\mathscr{F}_R$, which was defined to be a pair $(a,b)$, is treated as the real number obtained from the quotient of $a$ by $b$. It is helpful to consider $a/b$ as both a formal pair and a real number, as one often does with fractions.

Any clockwise path $\gamma$ has forward and backward limit points $\gamma_\infty$ and $\gamma_{-\infty}$. If $\mathcal{I}$ is bounded above, then $\gamma_\infty$ is the final vertex of $\gamma$, and if $\mathcal{I}$ is unbounded above, then $\gamma_\infty$ is the forward limit in $\mathbb{R}_\infty$ of the sequence of vertices of $\gamma$.  We say that two clockwise paths $\gamma$ in $\mathscr{F}_R$ and $\delta$ in $\mathscr{F}_S$ are \emph{compatible} if $(\gamma_{-\infty},\gamma_{\infty},\delta_{-\infty},\delta_\infty)$ is a clockwise sequence in $\mathbb{R}_\infty$, where possibly $\gamma_{\infty}=\delta_{-\infty}$ and possibly  $\delta_{\infty}=\gamma_{-\infty}$. Now, given a positive integer $L$, and $\delta\in\mathscr{F}_S$ with vertices $c_j/d_j$, we define $L\delta$ to be the path in $\mathscr{F}_{LS}$ with vertices $(Lc_j)/d_j$. To apply Theorem~\ref{theoremA}, we are interested in the property that the pair $(\gamma,L\delta)$ in $\mathscr{F}_R\times\mathscr{F}_{LS}$ is compatible (\emph{not} the pair $(\gamma,\delta)$). We need to check that this property is invariant under the action of $\Gamma^0(L)$ on $\mathscr{F}_R\times \mathscr{F}_S$. This is a straightforward calculation which relies on the fact that elements of $\Gamma^0(L)$ act on $\mathbb{R}_\infty$ as orientation-preserving homeomorphism. We omit the details. 

We say that an $N$-tiling is \emph{positive} if all its entries are positive and \emph{negative} if all its entries are negative. By Theorem~\ref{theoremA}, we can write the entries of a tame $N$-tiling $\mathbf{M}$ as $m_{i,j}=K(a_id_j-Lb_ic_j)$. Under this correspondence, exchanging the path $a_i/b_i$ for the path $(-a_i)/(-b_i)$ corresponds to replacing $\mathbf{M}$ with $-\mathbf{M}$. For this reason it is convenient to consider positive and negative $N$-tilings together when describing them using paths.

\begin{theorem}\label{theorem73}
For $N>0$, let $\mathbf{M}$ be a tame $N$-tiling with tameness parameters $(K,L,R,S)$ determined by the pair of minimal paths $\gamma$ in $\mathscr{F}_R$ and $\delta$ in $\mathscr{F}_S$. Then $\mathbf{M}$ is positive or  negative if and only if $\gamma$ and $L\delta$ are compatible clockwise paths.
\end{theorem}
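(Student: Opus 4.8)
The plan is to convert each entry of $\mathbf{M}$ into the sign of a $2$-by-$2$ determinant and then read that sign off from the arguments of the vertex vectors. Write $\mathbf{u}_i=(a_i,b_i)$ for the vertices of $\gamma$ and $\mathbf{w}_j=(Lc_j,d_j)$ for the vertices of $L\delta$, so that
\[
m_{ij}=K(a_id_j-Lb_ic_j)=K\det(\mathbf{u}_i\mid\mathbf{w}_j),
\]
the determinant with columns $\mathbf{u}_i$ and $\mathbf{w}_j$. Since $N=K^2LRS>0$ and $K,R,S>0$ we have $L>0$ and $K>0$, so $\mathbf{M}$ is positive (respectively negative) exactly when all these determinants are positive (respectively negative). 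I would also record the edge relations $\det(\mathbf{u}_{i-1}\mid\mathbf{u}_i)=R>0$ in $\mathscr{F}_R$ and $\det(\mathbf{w}_{j-1}\mid\mathbf{w}_j)=LS>0$ in $\mathscr{F}_{LS}$, which say that consecutive vertices of each path turn strictly in one direction.

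Next I would pass to continuous arguments. Because consecutive determinants are positive, I can choose lifts $\theta_i=\arg\mathbf{u}_i$ and $\phi_j=\arg\mathbf{w}_j$ that are strictly increasing with consecutive gaps in $(0,\pi)$. From $\det(\mathbf{u}_i\mid\mathbf{w}_j)=|\mathbf{u}_i|\,|\mathbf{w}_j|\sin(\phi_j-\theta_i)$ the target becomes the purely angular statement that $\phi_j-\theta_i\in(0,\pi)\pmod{2\pi}$ for all $i,j$ (positivity) or in $(\pi,2\pi)$ for all $i,j$ (negativity). The bridge to the hypotheses is a dictionary, which I would prove directly from the cyclic-order conventions (taking care that a point of $\mathbb{R}_\infty$ carries two arguments differing by $\pi$): one full clockwise loop of $\mathbb{R}_\infty$ is an argument increment of $\pi$, so $\gamma$ is a clockwise path if and only if all $\theta_i$ lie in a common interval of length $<\pi$, in which case $\gamma_{-\infty}$ and $\gamma_\infty$ carry the arguments $\Theta^-=\inf_i\theta_i$ and $\Theta^+=\sup_i\theta_i$ (similarly $\Phi^\pm$ for $L\delta$), and compatibility of $\gamma$ and $L\delta$ becomes the existence of lifts with
\[
\Theta^-\le\Theta^+\le\Phi^-\le\Phi^+\le\Theta^-+\pi .
\]

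With the dictionary in hand both implications are short. For the forward direction, if every $\phi_j-\theta_i\in(0,\pi)$ then fixing one index $j_0$ confines all $\theta_i$ to the length-$\pi$ interval $(\phi_{j_0}-\pi,\phi_{j_0})$, so $\gamma$ is clockwise; symmetrically $L\delta$ is clockwise, and the inequalities $\theta_i<\phi_j$ and $\phi_j-\theta_i<\pi$ give $\Theta^+\le\Phi^-$ and $\Phi^+\le\Theta^-+\pi$, i.e.\ compatibility. The negative case is obtained by replacing $\gamma$ with the path of vertices $(-\mathbf{u}_i)$, which represents the same points of $\mathbb{R}_\infty$ (hence the same clockwise and compatibility status) while negating every determinant. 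For the reverse direction the compatibility inequalities force $\phi_j-\theta_i\in[\Phi^--\Theta^+,\ \Phi^+-\Theta^-]\subseteq[0,\pi]$, so every determinant is $\ge 0$ for these representatives and $\le 0$ for the opposite ones; since the arguments attached to the given vectors are fixed, a single sign occurs throughout, and $\mathbf{M}$ is positive or negative once strict inequality is secured.

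The hard part is precisely this strictness at the two extreme corners, and the sign ambiguity it conceals. A determinant vanishes only when $\phi_j-\theta_i\in\{0,\pi\}$, which can happen solely at $\gamma_\infty=\eta_{-\infty}$ or $\eta_\infty=\gamma_{-\infty}$ (writing $\eta=L\delta$) and only if both extremal vertices there are attained: if $\gamma$ ends and $L\delta$ begins at the same vertex, the corresponding corner entry is $K\det$ of two positively proportional vectors, namely $0$. Thus two finite monotone paths meeting at a shared attained endpoint satisfy the letter of the compatibility definition yet produce a zero entry, so the crux is to show that the endpoint coincidences permitted in the definition are realised \emph{only} as unattained limits of infinite paths (where the determinants at all genuine vertices stay strictly of one sign), whereas a shared attained endpoint cannot occur for a positive or negative tiling. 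Disentangling this boundary behaviour, and confirming that it is exactly what separates a strictly signed tiling from one with a vanishing entry, is the delicate point of the argument; the remaining steps are routine once the angular dictionary is established.
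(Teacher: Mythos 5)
Your angular framework is sound and genuinely different from the paper's argument: the paper uses density of the $\Gamma_0(L)$-orbit of $\infty$ to normalise the configuration and then chases the chain of real inequalities \eqref{equation61}, with a short induction to pin down the signs of the $d_j$; your determinant-as-sine dictionary (with $L>0$ correctly forced by $N=K^2LRS>0$, and lifts $\theta_i,\phi_j$ with consecutive gaps in $(0,\pi)$) obtains the monotonicity and confinement in one stroke and makes the two implications pleasingly symmetric. The direction ``positive or negative $\Rightarrow$ compatible clockwise'' is complete in your sketch modulo routine lifting details, and your localisation of possible vanishing entries to doubly attained coincidences $\gamma_\infty=(L\delta)_{-\infty}$ or $(L\delta)_\infty=\gamma_{-\infty}$ is correct.

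The gap is the one you flag yourself, and the lemma you propose in order to close it cannot be proved, because it is false rather than delicate: the endpoint coincidences permitted by the definition of compatibility \emph{can} be attained by two finite paths. Take $K=L=R=S=1$, $\gamma=\langle \tfrac10,\tfrac01,\tfrac{-1}{1}\rangle$ and $\delta=\langle \tfrac{-1}{1},\tfrac{-2}{1},\tfrac{-3}{1}\rangle$ in $\mathscr{F}_1$ (where every path is minimal, and $L\delta=\delta$). These are clockwise paths, and $(\gamma_{-\infty},\gamma_\infty,\delta_{-\infty},\delta_\infty)=(\infty,-1,-1,-3)$ is a clockwise sequence with the permitted coincidence $\gamma_\infty=\delta_{-\infty}$, here attained on both sides; yet the tame $\text{SL}_2$-tiling they determine via $m_{i,j}=a_id_j-b_ic_j$, namely
\[
\begin{pmatrix} 1 & 1 & 1\\ 1 & 2 & 3\\ 0 & 1 & 2 \end{pmatrix},
\]
has a zero entry and is neither positive nor negative. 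So ``compatible $\Rightarrow$ positive or negative'' holds only if each permitted coincidence is understood to have at least one side unattained (an unattained limit of an infinite path); with that reading, the strictness you need is immediate, since a strictly monotone sequence of vertex arguments never attains its limit, and your angular argument then finishes both directions. It is worth noting that the paper's proof does not address this point either: it asserts the strict chain \eqref{equation61} outright, and strictness at the two $\gamma$--$L\delta$ junctions is exactly the issue. Your scepticism therefore identifies a genuine imprecision in the statement itself, but as a proof of the theorem as posed your write-up is incomplete at precisely this step, and it must be closed by sharpening the definition of compatibility (or adding the corresponding non-degeneracy hypothesis), not by the unprovable claim you set yourself.
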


Here the paths $\gamma$ and $\delta$ `determine' $\mathbf{M}$ according to the correspondence of Theorem~\ref{theoremA}. 

To illustrate Theorem~\ref{theorem73}, consider the minimal paths $\gamma=\langle \tfrac{10}{-3},\tfrac{7}{-2},\tfrac{4}{-1},\tfrac{1}{0},\tfrac{4}{1},\tfrac{7}{2},\tfrac{10}{3}\rangle\in\mathscr{F}_1$ and $\delta=\langle \tfrac{3}{4},\tfrac{6}{9},\tfrac{1}{2},\tfrac{0}{3},\tfrac{-1}{2},\tfrac{-6}{9},\tfrac{-3}{4}\rangle\in\mathscr{F}_3$ of Figure~\ref{figure4}. With $L=3$ we obtain
\[
3\delta=\langle \tfrac{9}{4},\tfrac{18}{9},\tfrac{3}{2},\tfrac{0}{3},\tfrac{-3}{2},\tfrac{-18}{9},\tfrac{-9}{4}\rangle\in\mathscr{F}_9.
\]
The paths $\gamma$ and $3\delta$, displayed in Figure~\ref{figure52}, are compatible clockwise paths, and, in agreement with Theorem~\ref{theorem73}, the corresponding tame $9$-tiling of Figure~\ref{figure1}(a) is positive.

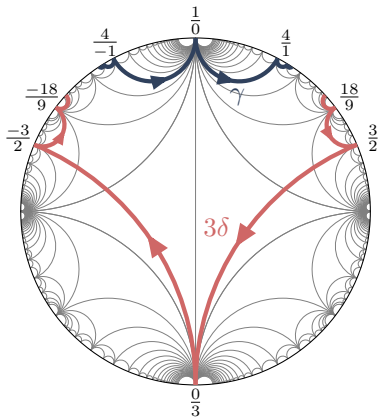
\begin{figure}[H]
\centering
\begin{tikzpicture}[scale=2.3]
\discfarey

\foreach \numA/\denA/\numB/\denB in {4/-1/1/0,4/-1/7/-2,7/-2/10/-3,1/0/4/1,4/1/7/2,7/2/10/3}{
	   \shline[gammacol,ultra thick](\numA:\denA:\numB:\denB);   
}

\foreach \numA/\denA/\numB/\denB in {9/4/18/9,18/9/3/2,3/2/0/3,0/3/-3/2,-3/2/-18/9,-18/9/-9/4}{
	   \shline[deltacol,ultra thick](\numA:\denA:\numB:\denB);   
}

\foreach \num/\den in {4/-1,1/0,4/1}{
	   \fareylabel(\num:\den); 
}
\foreach \num/\den in {18/9,3/2,0/3,-3/2,-18/9}{
	   \fareylabel(\num:\den); 
}

\node[deltacol] at (-37:0.15)  {$3\delta$};
\node[gammacol] at (70:0.7) {$\gamma$};
\end{tikzpicture}
\caption{Paths $\gamma=\langle \tfrac{10}{-3},\tfrac{7}{-2},\tfrac{4}{-1},\tfrac{1}{0},\tfrac{4}{1},\tfrac{7}{2},\tfrac{10}{3}\rangle\in\mathscr{F}_1$ and $3\delta=\langle \tfrac{9}{4},\tfrac{18}{9},\tfrac{3}{2},\tfrac{0}{3},\tfrac{-3}{2},\tfrac{-18}{9},\tfrac{-9}{4}\rangle\in\mathscr{F}_9$}
\label{figure52}
\end{figure}

\begin{proof}[Proof of Theorem~\ref{theorem73}]
Suppose first that $\gamma$ and $L\delta$ (with vertices $a_i/b_i$ and $Lc_j/d_j$) are compatible clockwise paths. The orbit of $\infty$ under $\Gamma^0(L)$ is dense in $\mathbb{R}_\infty$, so there is an element  of this orbit that lies clockwise of $Lc_0/d_0$ and anticlockwise of $Lc_1/d_1$. By mapping this element to $\infty$, and adjusting $\gamma$ and $L\delta$ accordingly, we can assume that $Lc_0/d_0<Lc_1/d_1$. Additionally, by applying the negative identity matrix $-I$ if need be (which belongs to $\Gamma^0(L)$), we can assume that $b_0>0$. We then have
\begin{equation}\label{equation61}
\frac{Lc_0}{d_0}<\frac{Lc_{-1}}{d_{-1}}<\dotsb <\frac{a_2}{b_2}<\frac{a_1}{b_1}<\frac{a_0}{b_0}<\frac{a_{-1}}{b_{-1}}<\frac{a_{-2}}{b_{-2}}<\dotsb<\frac{Lc_2}{d_2}<\frac{Lc_{1}}{d_{1}}.
\end{equation}
Observe that $b_{i-1}b_{i}$ is positive for all $i\in\mathcal{I}'$, because
\begin{equation}\label{equation62}
\frac{R}{b_{i-1}b_i}=\frac{a_{i-1}}{b_{i-1}}-\frac{a_{i}}{b_{i}}.
\end{equation}
Hence $b_i>0$, for $i\in\mathcal{I}$. By replacing $c_j/d_j$ with $(-c_j)/(-d_j)$ if need be (which changes $\delta$ and reverses the sign of every entry of $\mathbf{M}$) we can also assume that $d_0>0$. Similar arguments to those used already then show that $d_j>0$ for $j\leq 0$ and $d_j<0$ for $j>0$. Lastly, because $a_i/b_i >Lc_j/d_j$, for $i\in\mathcal{I}$ and $j\leq 0$, we see from the equation
\begin{equation}\label{equation63}
m_{i,j} = K(a_id_j-Lb_ic_j) = Kb_id_j\mleft(\frac{a_i}{b_i}-\frac{Lc_j}{d_j}\mright)
\end{equation}
that $m_{i,j}>0$. Similarly, $m_{i,j}>0$ if $j>0$. So $\mathbf{M}$ is positive. 

Conversely, suppose that $\mathbf{M}$ is positive or negative. By replacing $c_j/d_j$ with $(-c_j)/(-d_j)$ if need be we can assume that $\mathbf{M}$ is positive. Also, by applying a suitable element of $\Gamma^0(L)$, we can assume that $Lc_0/d_0<Lc_1/d_1$ and $d_0>0$. A short calculation then shows that $d_1<0$.

Consider \eqref{equation63} with $j=0,1$. Since $m_{i,0}>0$ and $m_{i,1}>0$, we have that $b_i>0$ and $Lc_0/d_0<a_i/b_i<Lc_1/d_1$, for $i\in\mathcal{I}$. We can then see from \eqref{equation62} that the sequence $a_i/b_i$ is decreasing.

Now, choose any $i\in\mathcal{I}$. From \eqref{equation63} we see that $Lc_j/d_j >a_i/b_i$ if and only if $d_j<0$. Let us prove by induction that $d_j<0$ for $j=1,2,\dotsc$. We know that $d_1<0$. Suppose that $d_{j-1}<0$, for some $j\geq 2$. If $d_j=0$, then $-d_{j-1}c_j=S$, so $c_j>0$, in which case $m_{i,j}=-KLb_ic_j<0$ -- a contradiction. If $d_j>0$, then 
\[
\frac{Lc_j}{d_j}=\frac{Lc_{j-1}}{d_{j-1}}-\frac{LS}{d_{j-1}d_j}>\frac{Lc_{j-1}}{d_{j-1}}>\frac{a_i}{b_i},
\]
which implies that $d_j<0$ -- another contradiction. So we must have $d_j<0$, as required. Hence $Lc_{j-1}/d_{j-1}-Lc_j/d_j=LS/(d_{j-1}d_j)>0$, for $j\geq 2$.

A similar argument shows that $d_j>0$, $a_i/b_i>Lc_j/d_j$, and $Lc_{j-1}/d_{j-1}>Lc_j/d_j$, for $j\leq 0$. Consequently, the chain of inequalities \eqref{equation61} is satisfied, so $\gamma$ and $\delta$ are compatible clockwise paths.
\end{proof}

\section{Positive rational friezes}\label{section77}

In this section we prove Theorem~\ref{theoremB}. Our approach for positive rational friezes is in the same spirit as that of \cite{MoOvTa2015} for positive integer friezes. The definition of a frieze that we use is the standard one (termed a `regular frieze' in \cite{ShVaZa2025}).

\begin{definition}
A \emph{rational frieze} of width $n$ is a function 
\[
\mathbf{F}\colon \{(i,j)\in\mathbb{Z}\times\mathbb{Z}:0\leq i-j\leq n\}\longrightarrow \mathbb{Q}
\]
with entries $m_{i,j}=\mathbf{F}(i,j)$ such that  
\begin{itemize}
\item $m_{i,i}=m_{i+n,i}=0$, for $i\in\mathbb{Z}$ (top and bottom rows of zeros),
\item $m_{i+1,i}=m_{i+n,i+1}=1$, for $i\in\mathbb{Z}$ (second and second-last rows of ones),
\item $m_{i,j}m_{i+1,j+1}-m_{i,j+1}m_{i+1,j}=1$, for $0<i-j<n$ (diamond rule).
\end{itemize}
A \emph{positive rational frieze} is a rational frieze for which all nonzero entries are positive.
\end{definition}

We encountered an example of a positive rational frieze already, in Figure~\ref{figure5}, which has width 7. Coxeter proved that positive rational friezes (or indeed positive real friezes) are periodic, so the entries $m_{i,j}$ lie in $\tfrac{1}{N}\mathbb{Z}$, for some positive integer $N$. In this case we describe the positive rational frieze as a positive frieze over $\tfrac{1}{N}\mathbb{Z}$. 

It is well known and straightforward to prove that there is a unique way to extend a positive frieze $\mathbf{F}$ to a tame bi-infinite $\text{SL}_2$-tiling $\mathbf{M}$ (indexed by $\mathbb{Z}\times \mathbb{Z}$). The $\text{SL}_2$-tiling $\mathbf{M}$ has entries $m_{i,j}$  that satisfy $m_{i+n,j}=m_{i,j+n}=-m_{i,j}$, for $i,j\in\mathbb{Z}$. This allows us the liberty of regarding friezes as special types of $\text{SL}_2$-tilings.

Given any positive frieze over $\tfrac{1}{N}\mathbb{Z}$ of width $n$ -- thought of an $\text{SL}_2$-tiling over $\tfrac{1}{N}\mathbb{Z}$ -- we can multiply each entry of the $\text{SL}_2$-tiling by $N$ to give an $N^2$-tiling with \emph{integral} entries. We define the \emph{greatest common divisor} of a positive frieze over $\tfrac{1}{N}\mathbb{Z}$ to be the greatest common divisor of this $N^2$-tiling. Notice that a positive frieze over $\tfrac{1}{N}\mathbb{Z}$ is also a positive frieze over $\tfrac{1}{M}\mathbb{Z}$ for any multiple $M$ of $N$ -- and the greatest common divisor depends on the choice of denominator. Typically it makes sense to work with the \emph{smallest} choice of $N$; however, we do not require that choice in our calculations.

\begin{lemma}\label{lemma81}
The tameness parameters $(K,L,R,S)$ of this $N^2$-tiling satisfy $L=1$ and $R=S$. 
\end{lemma}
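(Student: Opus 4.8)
The plan is to feed the extended frieze into Theorem~\ref{theoremA} and read off the constraints imposed by the two bordering (anti)diagonals. First I would extend $\mathbf{F}$ to its unique tame bi-infinite $\text{SL}_2$-tiling and set $\mathbf{M}=N\mathbf{F}$, a tame $N^2$-tiling over $\mathbb{Z}\times\mathbb{Z}$ with parameters $(K,L,R,S)$ satisfying $N^2=K^2LRS$. By Theorem~\ref{theoremA} there are minimal paths $\gamma=(a_i/b_i)\in\mathscr{F}_R$ and $\delta=(c_j/d_j)\in\mathscr{F}_S$ with $m_{ij}=K(a_id_j-Lb_ic_j)$. In these coordinates the frieze supplies exactly two pieces of boundary data, valid for every $i\in\mathbb{Z}$: the diagonal of zeros $m_{ii}=0$, and the subdiagonal of ones scaled by $N$, namely $m_{i+1,i}=N$.

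The key linking step exploits the diagonal. Since $K\neq0$, the relation $m_{ii}=0$ reads $a_id_i=Lb_ic_i$, so the integer vector $(c_i,d_i)$ is a scalar multiple of $(a_i,Lb_i)$; as $(a_i,Lb_i)\neq(0,0)$ we may write $(c_i,d_i)=\rho_i(a_i,Lb_i)$ for rationals $\rho_i$. Substituting this into the subdiagonal relation and using the edge condition $a_ib_{i+1}-b_ia_{i+1}=R$ of $\gamma$ collapses $m_{i+1,i}$ to $-K\rho_iLR$; equating with $N$ forces $\rho_i=-N/(KLR)$, \emph{independently of $i$}. Hence $\rho_i$ is a single constant $\rho$, and $\delta$ is rigidly tied to $\gamma$ via $c_j=\rho a_j$ and $d_j=\rho Lb_j$.

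With $\delta$ so expressed, its Pl\"ucker coordinates become $c_id_j-d_ic_j=\rho^2L(a_ib_j-b_ia_j)=(S/R)(a_ib_j-b_ia_j)$, the last equality using $\rho=-N/(KLR)$ together with $N^2=K^2LRS$. Now minimality enters decisively: the greatest common divisor of the $a_ib_j-b_ia_j$ is $1$ because $\gamma$ is minimal, so first $S/R$ must be a positive integer (otherwise the products $c_id_j-d_ic_j$ are not all integral) and then the greatest common divisor of $(S/R)(a_ib_j-b_ia_j)$ equals $S/R$; minimality of $\delta$ gives $S/R=1$, i.e.\ $R=S$. Finally $R=S$ turns $\rho^2=S/(LR)$ into $\rho^2=1/L$, so $\rho=-1/\sqrt{L}$ and $c_i=-a_i/\sqrt{L}$. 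Integrality of the $c_i$ forces $\sqrt{L}$ to be a positive integer $e$ dividing every $a_i$, whence $e$ divides every $a_ib_j-b_ia_j$ and so $e\mid1$; therefore $L=e^2=1$.

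The main obstacle is conceptual rather than computational: one must recognise that the two a priori independent paths $\gamma$ and $\delta$ of Theorem~\ref{theoremA} are locked together by the diagonal of zeros, and that the \emph{constancy} of the proportionality factor $\rho$ is precisely the information carried by the subdiagonal of $N$'s. Once $\rho$ is a single constant, both conclusions $R=S$ and $L=1$ drop out of the two greatest-common-divisor-one hypotheses by elementary divisibility; the only care needed is to verify that the relevant ratios $S/R$ and $\sqrt{L}$ are genuine positive integers before invoking minimality. As a sanity check, $R=S$ can alternatively be obtained more softly from the anti-transpose symmetry $m_{ij}=m_{-j,-i}$ of a frieze, which interchanges rows with columns and hence the column-minor divisor $r$ with the row-minor divisor $s$; but the path computation has the advantage of delivering both parameters simultaneously.
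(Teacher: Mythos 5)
Your proof is correct, but it takes a genuinely different route from the paper's. The paper argues directly from Coxeter's glide-reflection symmetry $m_{i,j}=m_{j,i-n}$ of the extended tiling: the identity $m_{i,j}m_{j,i+n}-m_{i,i+n}m_{j,j}=m_{i,j}^2$ exhibits every $m_{i,j}^2$ as a $2$-by-$2$ minor, so the greatest common divisor $t$ of all minors divides $K^2$, whence $t=K^2$; combined with Theorem~\ref{theoremV} (which gives $RS=N^2/t$) and $N^2=K^2LRS$ this yields $L=1$, and $R=S$ follows at once because the glide reflection exchanges rows with columns and hence the two minor gcds. You instead invoke the full classification of Theorem~\ref{theoremA} and use the bordering data $m_{i,i}=0$ and $m_{i+1,i}=N$ to lock $\delta$ to $\gamma$ through the constant factor $\rho=-N/(KLR)$, after which minimality of both paths forces $S/R=1$ and then $\sqrt{L}=1$; the divisibility steps all check out, including the care you take to show $S/R\in\mathbb{Z}$ before comparing gcds (note $L>0$ here since $N^2>0$, so $\rho<0$ and $\sqrt{L}$ makes sense, and some $a_i\neq 0$ makes $\sqrt{L}$ rational). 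Your computation is essentially the one the paper deploys later, \emph{inside} the proof of Theorem~\ref{theoremB}, to show $\delta=-\gamma$ -- indeed with $L=1$ your $\rho$ becomes $-1$ -- so your route proves the lemma and anticipates that step simultaneously, at the cost of relying on the heavier Theorem~\ref{theoremA} where the paper's proof needs only the symmetry and Theorem~\ref{theoremV}; there is no circularity, since Theorem~\ref{theoremA} is proved independently of this lemma. One caveat: your closing ``sanity check'' is wrong as stated -- $m_{i,j}=m_{-j,-i}$ is not a symmetry of a general frieze (it would force a palindromic quiddity cycle); the symmetry that interchanges rows with columns, and the one the paper actually uses, is the glide reflection $m_{i,j}=m_{j,i-n}$. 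Since that aside is explicitly optional, it does not affect the validity of your proof.
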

\begin{proof}
Let us denote the entries of this bi-infinite integral $N^2$-tiling $\mathbf{M}$ by $m_{i,j}$, for $i,j\in\mathbb{Z}$. Coxeter \cite{Co1971} observed that positive friezes are not only periodic but in fact they have a glide reflection symmetry. This symmetry, inherited by the $N^2$-tiling, says that $m_{i,j}=m_{j,i-n}$, for $i,j\in\mathbb{Z}$. 

Let $t$ be the greatest common divisor of all the minors of $\mathbf{M}$, and let $K$ be the greatest common divisor of all entries of $\mathbf{M}$. Clearly, $K^2\mid t$. Conversely, we observe that $t$ is a factor of the minor
\[
m_{i,j}m_{j,i+n}-m_{i,i+n}m_{j,j}=m_{i,j}^2,
\]
for any $i,j\in\mathbb{Z}$. Hence $t\mid K^2$. It follows that $t=K^2$. Recall that $N^2=K^2LRS$ and $RS=N^2/t$, by Theorem~\ref{theoremV}. Therefore $L=1$.

Equality of the vertical and horizontal tameness parameters follows immediately from the glide reflection property.
\end{proof}

A \emph{closed clockwise path} in $\mathscr{F}_R$ of length $n$ is a path $\langle a_0/b_0,a_1/b_1,\dots,a_n/b_n\rangle$ in $\mathscr{F}_R$, where $a_n=-a_0$ and $b_n=-b_0$, and where the values of $a_0/b_0,a_1/b_1,\dots,a_{n-1}/b_{n-1}$ on $\mathbb{R}_\infty$ are in strict clockwise order and complete less than one cycle of $\mathbb{R}_\infty$. An example of a closed clockwise path is shown in Figure~\ref{figure46}(a). Any closed clockwise path can be extended to a bi-infinite path by defining $a_{i+n}=-a_i$ and $b_{i+n}=-b_i$, for $i\in\mathbb{Z}$. We will move freely between a finite closed clockwise path and its bi-infinite counterpart.

Let us now prove Theorem~\ref{theoremB}. We remark that the formula $m_{i,j}=\frac{1}{R}(a_jb_i-b_ja_i)$ in that theorem specifies a positive frieze over $\tfrac{1}{N}\mathbb{Z}$ of width $n$ even when $a_i/b_i$ is a closed clockwise path that is \emph{not} minimal. The requirement that $a_i/b_i$ is minimal is present in Theorem~\ref{theoremB} to give a clean one-to-one correspondence. For an indication of why this is necessary, observe that two non-minimal paths of the form $(2u_i)/v_i$ and $u_i/(2v_i)$, which typically are not $\text{SL}_2(\mathbb{Z})$-equivalent, have the same image under the map $m_{i,j}=\frac{1}{R}(a_jb_i-b_ja_i)$.

\begin{proof}[Proof of Theorem~\ref{theoremB}]
Observe that $1/R=K/N$. The map from paths to tilings determined by the formula $m_{i,j}=\tfrac{K}{N}(a_jb_i-b_ja_i)$ is, up to scaling by a factor $1/N$, the restriction of the one-to-one correspondence of Theorem~\ref{theoremA} (with $L=1$ and $S=R$) to the collection of those pairs of paths $(\gamma,-\gamma)$, where $\gamma$ is a minimal clockwise path in $\mathscr{F}_R$ of length $n$. It follows, then, that all we need to check is that the image of a pair $(\gamma,-\gamma)$ is a positive frieze over $\tfrac{1}{N}\mathbb{Z}$ of width $n$ and greatest common divisor $K$, and that the preimage of any positive frieze over $\tfrac{1}{N}\mathbb{Z}$ of width $n$ and greatest common divisor $K$ is such a pair $(\gamma,-\gamma)$.

For the first of these two tasks, let $\gamma$ be a minimal closed clockwise path in $\mathscr{F}_R$ of length $n$ with vertices $a_i/b_i$, for $i\in\mathbb{Z}$ (where $a_{i+n}=-a_i$ and $b_{i+n}=-b_i$). Then, by Theorem~\ref{theoremA} with $\delta=-\gamma$, the bi-infinite array $\mathbf{M}$ with entries $m_{i,j}=\tfrac{K}{N}(a_jb_i-b_ja_i)$ is a tame $\text{SL}_2$-tiling over $\tfrac{1}{N}\mathbb{Z}$. Let us check that $\mathbf{M}$ is in fact a rational frieze of width $n$. To do this, notice, for example, that 
\[
m_{i+n,i+1} = \frac{1}{R}(a_{i+1}b_{i+n}-b_{i+1}a_{i+n})=\frac{1}{R}(-a_{i+1}b_{i}+b_{i+1}a_{i})=1.
\]
The other properties of a rational frieze can be proved in a similar way. 

The entries of the frieze belong to $\tfrac{1}{N}\mathbb{Z}$. Since $Nm_{i,j}=K(a_jb_i-b_ja_i)$, and $\gamma$ is minimal, we see that the greatest common divisor of the integers $Nm_{i,j}$ is $K$. 

We must show that $m_{i,j}>0$, for $0<i-j<n$. For this, notice that we can assume after applying an element of $\text{SL}_2(\mathbb{Z})$ that $a_0/b_0>a_1/b_1>\dotsb >a_{n-1}/b_{n-1}$ and $b_0>0$. Then $b_i>0$, for $i=1,2,\dots,n-1$. Observe that 
\begin{equation}\label{equation47}
m_{i,j}=\frac{1}{R}(a_jb_i-b_ja_i)=\frac{b_ib_j}{R}\mleft(\frac{a_j}{b_j}-\frac{a_i}{b_i}\mright).
\end{equation}
Hence $m_{i,j}>0$, for $i,j\in\{0,1,\dots,n-1\}$ with $i>j$. The glide reflection symmetry ensures that $m_{i,j}>0$ for all $0<i-j<n$. Hence $\mathbf{M}$ is indeed a positive frieze over $\tfrac{1}{N}\mathbb{Z}$ of width $n$ and greatest common divisor $K$.

For the second of the two tasks, let $\mathbf{M}$ be a positive frieze over $\tfrac{1}{N}\mathbb{Z}$ of width $n$ and greatest common divisor $K$. After multiplying each entry of this frieze by $N$ we obtain a tame $N^2$-tiling $\mathbf{M}'$ with tameness parameters $(K,1,R,R)$, by Lemma~\ref{lemma81}. Let us write $m_{i,j}'$ for the $(i,j)$th entry of this $N^2$-tiling. From Theorem~\ref{theoremA} we can find minimal paths $a_i/b_i$ and $c_j/d_j$ in $\mathscr{F}_R$ with $m'_{i,j}=K(a_id_j-b_ic_j)$, for $i,j\in\mathbb{Z}$. Furthermore, after applying an element of $\text{SL}_2(\mathbb{Z})$, we can assume that $b_0=0$ and $a_0>0$.

Since $m'_{i,i}=0$ we see that $a_id_i=b_ic_i$,  so there exists $\lambda_i\in\mathbb{Q}$ with $c_i=\lambda_i a_i$ and $d_i=\lambda_i b_i$, for $i\in\mathbb{Z}$. We have $m'_{i+1,i}=N$ and $N=KR$, so 
\[
N = K(a_{i+1}d_i-b_{i+1}c_{i})=K\lambda_i(a_{i+1}b_i-b_{i+1}a_{i})=-KR\lambda_i.
\]
Therefore $\lambda_i=-1$, so $\delta=-\gamma$. From the equations $m'_{i+n,0}=-m'_{i,0}$ and $m'_{i+n,1}=-m'_{i,1}$ we see that $a_{i+n}=-a_i$ and $b_{i+n}=-b_i$. Also, since $m'_{i,0}=a_0b_i$, we have $b_i>0$, for $i=1,2,\dots,n-1$. Finally, from \eqref{equation47}, we obtain $a_1/b_1>a_2/b_2>\dotsb >a_{n-1}/b_{n-1}$, so $\gamma$ is a closed clockwise path, as required. 
\end{proof}

\section{Zero tilings}\label{section zero}

It is necessary for us to consider 0-tilings because they arise as cross sections of hypertilings. We choose the simplest definition of a tame 0-tiling to suit our purposes, which corresponds to setting $L=0$ in Theorem~\ref{theoremA}.

\begin{definition}
A \emph{$0$-tiling} is a function $\mathbf{M}\colon \mathcal{I}\times \mathcal{J}\longrightarrow \mathbb{Z}$ with the property that every 2-by-2 subblock has determinant $0$. A 0-tiling is \emph{tame} if there are positive integers $K$, $R$, and $S$ and minimal paths $a_i/b_i$ in $\mathscr{F}_R$ and $c_j/d_j$ in $\mathscr{F}_S$ with \(m_{i,j}=Ka_id_j\), for $i\in\mathcal{I},j\in\mathcal{J}$.
\end{definition}

Note that $R$ and $S$ are not necessarily unique.

To illustrate this definition, consider the 0-tilings
\[
\begin{pmatrix}
1 &0 &2\\
0 & 0 & 0\\
1 & 0 & 2
\end{pmatrix}
\qquad\text{and}\qquad
\begin{pmatrix*}[r]
2 &0 &\!\!\!-2\\
0 & 0 &0\\
\!-2 & 0 & 2
\end{pmatrix*}.
\]
The first matrix is the tensor product of the vectors $(1,0,1)$ and $(1,0,2)$; however, it is not tame because the sum of the top and bottom rows is not a scalar multiple of the middle row (see Lemma~\ref{lemma91}, to follow). The second matrix is a tame 0-tiling with tameness parameters $(2,0,1,1)$. It is given by the formula $m_{i,j}=2a_id_j$, where $a_i/b_i$ and $c_j/d_j$ are the vertices of the paths $\langle \tfrac10,\tfrac01,\tfrac{-1}{0}\rangle$ and $\langle \tfrac01,\tfrac{-1}0,\tfrac{0}{-1}\rangle$.

The next lemma is a version of Theorem~\ref{theorem19} for 0-tilings. 

\begin{lemma}\label{lemma91}
Let $\mathbf{M}$ be a tame 0-tiling. Then there are sequences of rationals $(r_i)_{i\in \mathcal{I}^*}$ and $(s_j)_{j\in \mathcal{J}^*}$ such that
\[
m_{i-1,j}+m_{i+1,j}=r_im_{i,j}\quad\text{and}\quad m_{i,j-1}+m_{i,j+1}=s_jm_{i,j},
\]
for $i\in \mathcal{I}^*,j\in \mathcal{J}^*$.	
\end{lemma}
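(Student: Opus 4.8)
The plan is to exploit the rank-one factorisation $m_{i,j}=Ka_id_j$ furnished by the definition of a tame $0$-tiling, and then to read the two recurrences straight off the itineraries of the constituent paths by means of Lemma~\ref{lemmak}. The whole argument is essentially immediate once this factorisation is in place.

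First I would invoke the definition: since $\mathbf{M}$ is a tame $0$-tiling, there are positive integers $K$, $R$, $S$ and minimal paths $a_i/b_i$ in $\mathscr{F}_R$ and $c_j/d_j$ in $\mathscr{F}_S$ with $m_{i,j}=Ka_id_j$, for $i\in\mathcal{I},j\in\mathcal{J}$. The crucial point is that each entry splits as a product of a factor depending only on $i$ and a factor depending only on $j$. Applying Lemma~\ref{lemmak} to the path $a_i/b_i$ in $\mathscr{F}_R$, its itinerary $(\lambda_i)_{i\in\mathcal{I}^*}$ is the unique rational sequence with $a_{i-1}+a_{i+1}=\lambda_ia_i$ (and the analogous relation for the $b_i$); I would set $r_i=\lambda_i$. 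Likewise, applying Lemma~\ref{lemmak} to $c_j/d_j$ in $\mathscr{F}_S$ produces an itinerary $(\mu_j)_{j\in\mathcal{J}^*}$ with $d_{j-1}+d_{j+1}=\mu_jd_j$, and I would set $s_j=\mu_j$. Both sequences are rational because itineraries are rational by definition.

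The two recurrences then follow by a one-line computation in each case. For the vertical recurrence, holding the $j$-dependent factor $Kd_j$ fixed and using the recurrence for $(a_i)$,
\[
m_{i-1,j}+m_{i+1,j}=K(a_{i-1}+a_{i+1})d_j=K\lambda_ia_id_j=r_im_{i,j};
\]
symmetrically, holding $Ka_i$ fixed and using the recurrence for $(d_j)$ gives $m_{i,j-1}+m_{i,j+1}=s_jm_{i,j}$. There is no genuine obstacle: all the content is already packaged in Lemma~\ref{lemmak}, and the only thing worth remarking is that the factorisation of $m_{i,j}$ is precisely what lets the single-index recurrences of the two paths propagate unchanged across every row and every column of $\mathbf{M}$.
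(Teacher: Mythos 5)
Your proposal is correct and follows exactly the same route as the paper's own proof: invoke the factorisation $m_{i,j}=Ka_id_j$ from the definition of a tame $0$-tiling, apply Lemma~\ref{lemmak} to the paths $a_i/b_i$ and $c_j/d_j$ to obtain the itineraries, and read off the two recurrences. The only difference is that you write out the one-line computation that the paper dismisses with ``the required result follows immediately.''
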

\begin{proof}
We can write $m_{i,j}=Ka_id_j$ for paths $a_i/b_i$ in $\mathscr{F}_R$ and $c_j/d_j$ in $\mathscr{F}_S$. By Lemma~\ref{lemmak} there are sequences of rationals $r_i$ and $s_j$ with $a_{i-1}+a_{i+1}=r_i a_i$ and $d_{j-1}+d_{j+1}=s_j d_j$, for $i\in \mathcal{I}^*$ and $j\in \mathcal{J}^*$. The required result follows immediately.
\end{proof}

\section{Tame integer hypertilings}\label{section66}

We now turn our attention to integer hypertilings -- and prove Theorem~\ref{theoremD} (we prove Theorem~\ref{theoremC} in the next section). The more straightforward part of Theorem~\ref{theoremD} is that three paths determine an integer hypertiling, so let us begin with that.

\begin{theorem}\label{theorem71}
Let $A$ be a nonsingular Bhargava cube, and let $u_{i0}/u_{i1}$, $v_{j0}/v_{j1}$, and $w_{k0}/w_{k1}$  be three minimal paths in $\mathscr{F}_R$, $\mathscr{F}_S$, and $\mathscr{F}_T$. Then the function
\[
m_{ijk} = \sum_{p,q,r=0}^1 A_{pqr}u_{ip}v_{jq}w_{kr},\qquad \text{$(i,j,k)\in\mathcal{I}\times\mathcal{J}\times\mathcal{K}$,}
\]
is a tame $N$-hypertiling, where $N=R^2S^2T^2\Det A$.
\end{theorem}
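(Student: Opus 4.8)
The plan is to check the two requirements in the definition of a tame $N$-hypertiling separately: that every $2\times 2\times 2$ subblock has hyperdeterminant $N=R^2S^2T^2\Det A$, and that $\mathbf{M}$ is synchronised with every cross section a tame integer tiling. For the first requirement I would read off the hyperdeterminant directly from the $\mathrm{SL}_2(\mathbb{Z})^3$-type action. Fixing $(i,j,k)$, set
\[
U_i=\begin{pmatrix}u_{i0}&u_{i1}\\ u_{i+1,0}&u_{i+1,1}\end{pmatrix},\quad V_j=\begin{pmatrix}v_{j0}&v_{j1}\\ v_{j+1,0}&v_{j+1,1}\end{pmatrix},\quad W_k=\begin{pmatrix}w_{k0}&w_{k1}\\ w_{k+1,0}&w_{k+1,1}\end{pmatrix},
\]
whose determinants are $R$, $S$, $T$ respectively, these being exactly the edge conditions for the three minimal paths. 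A term-by-term comparison shows that the subblock $(m_{i+\alpha,j+\beta,k+\gamma})_{\alpha,\beta,\gamma\in\{0,1\}}$ is precisely the cube $(U_i,V_j,W_k)A$, so the scaling law $\Det\big((A,B,C)M\big)=(\det A\det B\det C)^2\Det M$ stated in the introduction gives hyperdeterminant $(RST)^2\Det A=N$, and $\mathbf{M}$ is an $N$-hypertiling.

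Next I would treat the cross sections. Fixing $k=k_0$ and writing $B_{pq}=\sum_r A_{pqr}w_{k_0 r}$, the cross section becomes the bilinear form $m_{ij}=u_i^{\mathsf T}Bv_j$, where $u_i=(u_{i0},u_{i1})^{\mathsf T}$ and $v_j=(v_{j0},v_{j1})^{\mathsf T}$; the $i$- and $j$-cross sections have the same shape by symmetry. Two observations follow. Since $m_{ij}=(B^{\mathsf T}u_i)^{\mathsf T}v_j$, the cross-section matrix factors as a product of an $|\mathcal I|\times 2$ matrix and a $2\times|\mathcal J|$ matrix, so it has rank at most $2$ and every $3\times 3$ subblock has determinant $0$. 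And writing a $2\times 2$ subblock as $U_i B V_j^{\mathsf T}$ shows its determinant equals $RS\det B$, a constant. Hence when $\det B\neq 0$ the cross section is a tame $(RS\det B)$-tiling outright.

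The delicate case, and the one I expect to be the main obstacle, is $\det B=0$, where the cross section is a $0$-tiling and must be matched against the separate definition of tameness for $0$-tilings in Section~\ref{section zero}. Here I would first use nonsingularity of $A$ to exclude $B=0$: the identity $B=w_{k_00}A_{\cdot\cdot 0}+w_{k_01}A_{\cdot\cdot 1}$ with $(w_{k_00},w_{k_01})\neq(0,0)$ shows that $B=0$ would force the two $k$-faces of $A$ to be proportional, and a direct computation (a cube with one face a scalar multiple of another, using the permutation-invariance of $\Det$) then gives $\Det A=0$, a contradiction. Thus $B$ has rank exactly $1$, and as an integer matrix it factors as $B=\xi\eta^{\mathsf T}$ with $\xi,\eta\in\mathbb{Z}^2\setminus\{0\}$; consequently $m_{ij}=(u_i^{\mathsf T}\xi)(\eta^{\mathsf T}v_j)=:\alpha_i\beta_j$. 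Choosing $X\in\mathrm{SL}_2(\mathbb{Z})$ whose first row is the primitive part of $\xi$ realises $\alpha_i=K_1 a_i$, where $a_i/b_i$ are the vertices of the minimal path $X\gamma$ in $\mathscr{F}_R$; symmetrically, a matrix $Y\in\mathrm{SL}_2(\mathbb{Z})$ whose second row is the primitive part of $\eta$ realises $\beta_j=K_2 d_j$ for a minimal path $Y\delta$ in $\mathscr{F}_S$. Then $m_{ij}=K a_i d_j$ with $K=K_1K_2>0$, which is exactly the form demanded by the definition of a tame $0$-tiling.

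Finally, for the synchronised condition I would contract the two indices of a chosen pair of corners. Fixing corners $\mu$ and $\nu$ of a $2\times 2\times 3$ subblock (the $2\times 3\times 2$ and $3\times 2\times 2$ cases being symmetric), put $C^{(\mu)}_r=\sum_{p,q}A_{pqr}u_{ip}v_{jq}$ so that $m_{\mu k}=(C^{(\mu)})^{\mathsf T}w_k$, and let $C$ be the $2\times 2$ matrix with rows $(C^{(\mu)})^{\mathsf T}$ and $(C^{(\nu)})^{\mathsf T}$. Then the $2\times 2$ minor of the two stacked $1\times 1\times 3$ columns taken over consecutive levels $k,k+1$ equals $\det\!\big(C\,W_k^{\mathsf T}\big)=T\det C$, which is independent of $k$ because $\det C$ depends only on $\mu$ and $\nu$ while $\det W_k=T$ for every $k$. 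As both minors appearing in the synchronisation identity $ad-bc=cf-de$ equal $T\det C$, the identity holds, and $\mathbf{M}$ is tame. The only genuinely technical point in this scheme is the $\det B=0$ paragraph, where one must convert the rank-one bilinear form into an explicit product of minimal paths; everything else is bookkeeping built on the determinant conditions of the three Farey paths.
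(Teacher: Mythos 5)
Your proposal is correct, and while your hyperdeterminant computation for the 2-by-2-by-2 subblocks coincides exactly with the paper's (the subblock is $(U_i,V_j,W_k)A$, and the scaling law gives $(RST)^2\Det A$), your handling of the other two conditions takes a genuinely different route. For the cross sections, the paper contracts $A$ against $w_{k_0}$ to the same matrix $B$ you define, notes $B\neq 0$ by nonsingularity of $A$ (which you justify in more detail than the paper, via proportionality of the two $k$-faces forcing $\Det A=0$), but then applies the Smith normal form to write $B=KX^{T}\bigl(\begin{smallmatrix}0&1\\-L&0\end{smallmatrix}\bigr)Y$ with $X,Y\in\mathrm{SL}_2(\mathbb{Z})$ and transports the paths by $X$ and $Y$, so that the cross section takes the canonical form $m_{i,j}=K(u_{i0}'v_{j1}'-Lu_{i1}'v_{j0}')$; tameness then follows from Theorem~\ref{theoremA} when $L\neq 0$ and from the definition of a tame $0$-tiling when $L=0$, handling both cases uniformly. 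You instead split on $\det B$: in the nonsingular case you bypass Theorem~\ref{theoremA} entirely by verifying the two defining conditions directly (constant 2-by-2 minors $RS\det B$ from the factorization $U_iBV_j^{\mathsf T}$, vanishing 3-by-3 minors from rank at most 2), which is more elementary and self-contained, while your rank-one factorization $B=\xi\eta^{\mathsf T}$ with primitive parts is essentially the $L=0$ instance of the Smith form reached by hand, and it correctly lands on the form $m_{i,j}=Ka_id_j$ demanded by the $0$-tiling definition. The synchronisation argument is where you differ most: the paper derives the three-term relation $m_{i-1,q,r}+m_{i+1,q,r}=\mu_i m_{i,q,r}$ from the itinerary (Lemma~\ref{lemmak}) and rearranges to get $ad-bc=cf-de$, whereas you show both minors equal $T\det C$ because every edge matrix $W_k$ has determinant $T$; your version is arguably cleaner, since it proves in one stroke that \emph{all} consecutive minors along the long direction agree, for an arbitrary pair of corners, without invoking itineraries. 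Both arguments are complete modulo the symmetric orientations, which you and the paper each dispatch with a word.
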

\begin{proof}
Let $\mathbf{M}\colon\mathcal{I}\times\mathcal{J}\times\mathcal{K}\longrightarrow\mathbb{Z}$ be the array with entries $m_{i,j,k}$. Observe that the 2-by-2-by-2 subblock
\[
\mleft(\begin{matrix}
m_{i,j,k} & m_{i,j+1,k} \\
m_{i+1,j,k} & m_{i+1,j+1,k}
\end{matrix}
\quad
\middle\vert
\quad
\begin{matrix}
m_{i,j,k+1} & m_{i,j+1,k+1} \\
m_{i+1,j,k+1} & m_{i+1,j+1,k+1}
\end{matrix}\mright)
\]
is the image of $A$ under the triple of 2-by-2 integer matrices
\[
\mleft(\begin{pmatrix}u_{i,0} & u_{i,1}\\ u_{i+1,0} & u_{i+1,1}\end{pmatrix},\begin{pmatrix}v_{j,0} & v_{j,1}\\ v_{j+1,0} & v_{j+1,1}\end{pmatrix} ,\begin{pmatrix}w_{k,0} & w_{k,1}\\ w_{k+1,0} &w_{k+1,1}\end{pmatrix}\mright).
\]
Therefore it has hyperdeterminant $N=R^2S^2T^2\Det A$, so $\mathbf{M}$ is an $N$-hypertiling.

Next we show that each cross section of $\mathbf{M}$ is a tame integer tiling. Consider the restriction of $\mathbf{M}$ to $\mathcal{I}\times \mathcal{J}\times \{k_0\}$, for some integer $k_0\in \mathcal{K}$. Let $m_{i,j}=m_{i,j,k_0}$, for $i\in \mathcal{I}$ and
$j\in \mathcal{J}$. Let
\[
B = \begin{pmatrix} A_{0,0,0}w_{k_0,0}+A_{0,0,1}w_{k_0,1} & A_{0,1,0}w_{k_0,0}+A_{0,1,1}w_{k_0,1}\\ A_{1,0,0}w_{k_0,0}+A_{1,0,1}w_{k_0,1} & A_{1,1,0}w_{k_0,0}+A_{1,1,1}w_{k_0,1} \end{pmatrix}.
\]
Observe that $B$ is not the zero matrix because $A$ is nonsingular. We have
\[
m_{i,j} = \begin{pmatrix}u_{i,0} & u_{i,1}\end{pmatrix}B\begin{pmatrix}v_{j,0} \\ v_{j,1}\end{pmatrix}.
\]
Let $K$ be the greatest common divisor of $B$. Using the Smith normal form we can find $X,Y\in\text{SL}_2(\mathbb{Z})$ with 
\[
B=KX^T\begin{pmatrix}0 & 1\\-L & 0\end{pmatrix}Y,
\]
for some integer $L$. We define
\[
\begin{pmatrix}u_{i,0}' \\ u_{i,1}'\end{pmatrix}=X\begin{pmatrix}u_{i,0} \\ u_{i,1}\end{pmatrix}\quad\text{and}\quad\begin{pmatrix}v_{j,0}' \\ v_{j,1}'\end{pmatrix}=Y\begin{pmatrix}v_{j,0} \\ v_{j,1}\end{pmatrix}.
\]
Then $u_{i,0}'/u_{i,1}'$ and $v_{j,0}'/v_{j,1}'$ are minimal paths in $\mathscr{F}_R$ and $\mathscr{F}_S$. Observe that 
\[
m_{i,j}=K\begin{pmatrix}u_{i,0} & u_{i,1}\end{pmatrix}X^T\begin{pmatrix}0 & 1\\-L & 0\end{pmatrix}Y\begin{pmatrix}v_{j,0} \\ v_{j,1}\end{pmatrix}
=K(u_{i,0}'v_{j,1}'-Lu_{i,1}'v_{j,0}').
\]
Hence the restriction of $\mathbf{M}$ to $\mathcal{I}\times \mathcal{J}\times \{k_0\}$ is a tame $(K^2LRS)$-tiling, by Theorem~\ref{theoremA} (or by definition if $L=0$).

It remains to prove that $\mathbf{M}$ is synchronised. To do this, we will consider the 3-by-2 matrix
\[
\begin{pmatrix}
m_{i-1,j-1,k} &   m_{i-1,j,k-1}\\ 
m_{i,j-1,k} &  m_{i,j,k-1}\\
m_{i+1,j-1,k} &  m_{i+1,j,k-1}
\end{pmatrix}
\]
and prove that $m_{i-1,j-1,k}m_{i,j,k-1}-m_{i-1,j,k-1}m_{i,j-1,k}=m_{i,j-1,k}m_{i+1,j,k-1}-m_{i,j,k-1}m_{i+1,j-1,k}$. Other cases can be handled similarly. Now, by Lemma~\ref{lemmak}, there exists a rational $\mu_i$ with $(u_{i-1,0},u_{i-1,1})+(u_{i+1,0},u_{i+1,1})=\mu_i(u_{i,0},u_{i,1})$. Hence $m_{i-1,q,r}+m_{i+1,q,r}=\mu_i m_{i,q,r}$, for $q\in\mathcal{J}, r\in \mathcal{K}$. It follows that 
\begin{align*}
(m_{i-1,j-1,k}+m_{i+1,j-1,k})m_{i,j,k-1} &=\mu_im_{i,j,k-1}m_{i,j-1,k}\\
&=(m_{i-1,j,k-1}+m_{i+1,j,k-1})m_{i,j-1,k}.
\end{align*}
Rearranging this gives the required identity.
\end{proof}

Our next task is to prove that any tame integer hypertiling can be realised by three paths in Farey graphs. Some preliminary steps are required to reach this goal.

\begin{lemma}\label{lemma88}
Consider a 3-by-3-by-3 subblock
\[
M=\mleft(\begin{matrix}a_1 & a_2 & a_3\\ a_4 & a_5 & a_6\\ a_7 & a_8 & a_9\end{matrix}\ \middle\vert\ \begin{matrix}b_1 & b_2 & b_3\\ b_4 & b_5 & b_6\\ b_7 & b_8 & b_9\end{matrix}\ \middle\vert\ \begin{matrix}c_1 & c_2 & c_3\\ c_4 & c_5 & c_6\\ c_7 & c_8 & c_9\end{matrix}\mright)
\]
of a tame integer hypertiling $\mathbf{M}$. There exists a rational number $\lambda$ with  $a_i+c_i=\lambda b_i$, for $i=1,2,\dots,9$.
\end{lemma}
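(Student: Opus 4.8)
The plan is to show that the $3\times 3$ array $D=A+C$, with entries $d_i=a_i+c_i$, is a rational scalar multiple of the middle layer $B$. Index the nine positions of each layer by pairs $(p,q)$ with $p,q\in\{0,1,2\}$, writing $a_{pq},b_{pq},c_{pq}$ for the entries of the three layers $A,B,C$ (these are the slices $m_{\cdot,\cdot,k},m_{\cdot,\cdot,k+1},m_{\cdot,\cdot,k+2}$ along the third coordinate) and $d_{pq}=a_{pq}+c_{pq}$; the goal becomes a single $\lambda$ with $d_{pq}=\lambda b_{pq}$ for all $p,q$. First I would extract scalars from the two transverse cross sections. Fixing the second coordinate $q$ gives a cross section in the $(i,k)$ directions, which is a tame integer tiling; its recurrence in the $k$ direction at the middle layer (Theorem~\ref{theorem19}, or Lemma~\ref{lemma91} when the cross section is a $0$-tiling) yields a rational $\sigma_q$, independent of $p$, with $d_{pq}=\sigma_q b_{pq}$. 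Fixing the first coordinate $p$ symmetrically yields rationals $\tau_p$, independent of $q$, with $d_{pq}=\tau_p b_{pq}$. In particular $(\sigma_q-\tau_p)b_{pq}=0$ for all $p,q$, and wherever $b_{pq}=0$ we automatically get $d_{pq}=0$.

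The crucial extra ingredient is the synchronised condition. Applied to the two $k$-directed lines $(m_{p,q,0},m_{p,q,1},m_{p,q,2})$ and $(m_{p',q',0},m_{p',q',1},m_{p',q',2})$ lying in a common $2$-by-$2$-by-$3$ subblock (that is, with $|p-p'|\leq 1$ and $|q-q'|\leq 1$), it reads $ad-bc=cf-de$ for the stacked $3$-by-$2$ matrix, which after rearrangement becomes
\[
b_{p'q'}\,d_{pq}=b_{pq}\,d_{p'q'}.
\]
Combined with $d_{pq}=\sigma_q b_{pq}$ this says $(\sigma_q-\sigma_{q'})\,b_{pq}b_{p'q'}=0$: at any two neighbouring positions carrying nonzero entries of $B$, the column-scalars agree. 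Hence if $B\equiv 0$ then every $d_{pq}=0$ and any $\lambda$ works, while if $B\not\equiv 0$ it remains only to show that all $\sigma_q$ attached to the nonzero columns of $B$ coincide; then $\lambda$ is their common value, and $d_{pq}=\sigma_q b_{pq}=\lambda b_{pq}$ on the nonzero columns while $d_{pq}=0=\lambda b_{pq}$ on the zero columns.

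The main obstacle is this last step: propagating the equality of scalars across the zeros of $B$. The point is that the zero pattern of the $3\times 3$ tame tiling $B$ is constrained by the fact that its four contiguous $2$-by-$2$ subblocks share a common determinant $N_B$. When $N_B\neq 0$ none of these minors vanishes, so $B$ has no zero row or column, and any two adjacent columns possess a pair of neighbouring nonzero entries (otherwise their supports would lie at distance $\geq 2$, forcing a quadrant $2$-by-$2$ subblock to have a zero row or column); the displayed relation then gives $\sigma_0=\sigma_1=\sigma_2$. When $N_B=0$ the layer $B$ has rank one, of the form $b_{pq}=Ka_pd_q$, so all its nonzero columns share a common support, and the identity $(\sigma_q-\tau_p)b_{pq}=0$ along a common nonzero row forces the nonzero $\sigma_q$ to agree. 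Either way the nonzero column-scalars coincide, completing the proof. I expect the bookkeeping for this support/adjacency dichotomy to be the only delicate part; the algebraic heart is the one-line synchronised identity displayed above, which is exactly what converts the two separate scalings $\sigma_q$ and $\tau_p$ into a single $\lambda$.
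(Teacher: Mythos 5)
Your proof is correct, and it organises the key step differently from the paper's own argument. Both proofs begin identically: the transverse cross sections through the subblock are tame integer tilings, so Theorem~\ref{theorem19} (or Lemma~\ref{lemma91} in the $0$-tiling case) supplies column scalars $\sigma_q$ and row scalars $\tau_p$ with $d_{pq}=\sigma_q b_{pq}=\tau_p b_{pq}$, and these agree wherever $b_{pq}\neq 0$ --- this is exactly the paper's observations (a)--(c). The divergence is in how the scalars are propagated across zeros of the middle layer $B$. The paper runs a case analysis on the zero pattern of $B$ (centre entry nonzero with a nonzero edge neighbour; centre zero with a nonzero corner; and finally the two checkerboard patterns), invoking the synchronised condition only once, in the residual checkerboard case, after using tameness of the cross sections to pin down an explicit antisymmetric form of the whole $3$-by-$3$-by-$3$ block. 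You instead extract the synchronised condition up front as the uniform bilinear identity $b_{p'q'}d_{pq}=b_{pq}d_{p'q'}$, valid for \emph{all} pairs of pillars of a common $2$-by-$2$-by-$3$ subblock --- crucially including diagonal neighbours, which is precisely what kills the checkerboard configurations without any explicit computation --- and then split on whether the cross section containing $B$ has nonzero or zero $2$-by-$2$ determinant $N_B$. Your support argument in the case $N_B\neq 0$ is sound (separated supports in adjacent columns would force a singular contiguous $2$-by-$2$ subblock), and in the case $N_B=0$ you correctly recognise that the synchronised identity alone cannot bridge columns $0$ and $2$ across a zero middle column, repairing this via the rank-one form $b_{pq}=Ka_pd_q$ of a tame $0$-tiling (common column supports) together with the row scalars $\tau_p$. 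What each approach buys: the paper's version needs no structural input beyond the recurrences, at the cost of intricate zero-pattern bookkeeping and one ad hoc computation; yours uses the tame $0$-tiling definition as a structural ingredient but makes the synchronised condition do systematic work, yielding a propagation argument that is shorter and, in the checkerboard case, conceptually cleaner.
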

\begin{proof}
There are six vertical cross sections through $M$ with index sets $\{1,2,3\}$, $\{4,5,6\}$, $\{7,8,9\}$, $\{1,4,7\}$, $\{2,5,8\}$, and $\{3,6,9\}$. Let us call these \emph{vertical index sets}. We have three observations.
\begin{enumerate}
\item[(a)] Let $\mathcal{I}$ be a vertical index set. By Theorem~\ref{theorem19} and Lemma~\ref{lemma91}, there is a rational number $\lambda_\mathcal{I}$ with $a_i+c_i=\lambda_\mathcal{I} b_i$, for $i\in \mathcal{I}$.
\item[(b)] The rational $\lambda_\mathcal{I}$ from observation (a) is unique unless $b_i=0$, for $i\in \mathcal{I}$. In the exceptional case when $b_i=0$, for $i\in \mathcal{I}$, the rational $\lambda_\mathcal{I}$ can be chosen arbitrarily.
\item[(c)] Each index $i\in\{1,2,\dots,9\}$ belongs to precisely two vertical index sets, $\mathcal{I}$ and $\mathcal{J}$. If $b_i\neq 0$, then $\lambda_\mathcal{I}=(a_i+c_i)/b_i=\lambda_\mathcal{J}$.
\end{enumerate}

Suppose that $b_5\neq 0$. If any one of the four neighbouring entries $b_2$, $b_4$, $b_6$, or $b_8$ is nonzero, then one can show that all the rationals $\lambda_\mathcal{I}$ can be chosen to be equal, so there exists a rational $\lambda$ as required. We sketch a proof of this elementary assertion. Suppose without loss of generality that $b_2\neq 0$. If $b_1=b_4=b_7=0$, $b_3=b_6=b_9=0$, or $b_7=b_8=b_9=0$, then the existence of $\lambda$ is easily established. Otherwise, one of $b_1$ or $b_4$ is nonzero, one of $b_3$ or $b_6$ is nonzero, and one of $b_7$, $b_8$, or $b_9$ is nonzero. We can then apply observations (a)--(c) to deduce the existence of $\lambda$.

Suppose now that $b_5=0$. If any of the four corner entries $b_1$, $b_3$, $b_7$, or $b_9$ is nonzero, then, with similar reasoning to the previous paragraph, we can establish the existence of the required rational $\lambda$.

In summary, the lemma has been established for all cases apart from those in which the middle cross section of entries $b_i$ has one of the forms
\[
\begin{pmatrix}\ast & 0 & \ast \\ 0 & \ast & 0 \\ \ast & 0 & \ast  \end{pmatrix}\quad\text{or}\quad\begin{pmatrix}0 & \ast & 0 \\ \ast & 0 & \ast\\ 0 & \ast & 0  \end{pmatrix}.
\]
We will consider the second case; the first case can be handled similarly. In this second case, if any of the entries labelled $\ast$ are 0, then it is straightforward to establish the existence of the rational $\lambda$, so let us assume they are all nonzero. From observations (a)--(c) we can see that $\lambda_{1,2,3}=\lambda_{2,5,8}=\lambda_{7,8,9}$ and $\lambda_{1,4,7}=\lambda_{4,5,6}=\lambda_{3,6,9}$. We denote the first of these values by $\lambda$ and the second by $\mu$; we must show that $\lambda=\mu$.

From the fact that each cross section of $M$ is a tame 3-by-3 integer tiling, we can see that 
\[
M=\mleft(\begin{matrix}x & q & -x \\ p & y & -p\\ -x & -q & x \end{matrix}\quad \middle\vert\quad \begin{matrix}0 & b & 0 \\ a & 0 & -a \\ 0 & -b & 0 \end{matrix}\quad \middle\vert\quad \begin{matrix}-x & s & x \\ r & -y & -r\\ x & -s & -x \end{matrix}\mright),
\]
for some integers $a$, $b$, $x$, $y$, $p$, $q$, $r$, and $s$. Consider the 3-by-2 matrix
\[
A=\begin{pmatrix}p & q \\ a &  b \\ r &  s  \end{pmatrix}.
\]
By the synchronised property we have $pb-qa=as-br$, so 
\[
\mu= \frac{p+r}{a} = \frac{q+s}{b}=\lambda,
\]
as required.
\end{proof}

Lemma~\ref{lemma88} established the existence of a rational number $\lambda$ subject to certain properties; in fact, there is only one such rational number that satisfies these properties. The next lemma is a stronger uniqueness assertion, for smaller subblocks.

\begin{lemma}\label{lemma89}
Consider a 2-by-2-by-3 subblock
\[
M=\mleft(\begin{matrix}a_1 & a_2 \\ a_3 & a_4\end{matrix}\ \middle\vert\ \begin{matrix}b_1 & b_2 \\ b_3 & b_4\end{matrix}\ \middle\vert\ \begin{matrix}c_1 & c_2 \\ c_3 & c_4\end{matrix}\mright)
\]
of a tame integer hypertiling $\mathbf{M}$. There exists a unique rational number $\lambda$ with  $a_i+c_i=\lambda b_i$, for $i=1,2,3,4$.
\end{lemma}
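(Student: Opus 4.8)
The plan is to exploit the synchronised hypothesis directly, which for a $2$-by-$2$-by-$3$ subblock relates \emph{all} four of its $1$-by-$1$-by-$3$ pillars, rather than only pillars lying in a common cross section. Write $e_i = a_i + c_i$ for $i = 1,2,3,4$. For any two pillars indexed $p$ and $q$, I would stack them into the $3$-by-$2$ matrix $\begin{pmatrix} a_p & a_q \\ b_p & b_q \\ c_p & c_q \end{pmatrix}$. Since $\mathbf{M}$ is tame it is synchronised, so the defining identity $a_p b_q - a_q b_p = b_p c_q - b_q c_p$ holds; rearranging gives $b_q e_p = b_p e_q$. (For two pillars in the same cross section this is automatic from the recurrence of Theorem~\ref{theorem19} or Lemma~\ref{lemma91}; for pillars in different cross sections it is exactly the extra content of the synchronised condition.) Carrying this out for all six pairs shows that the integer vectors $(b_i, e_i)$, $i = 1,2,3,4$, are pairwise proportional, i.e.\ they all lie on a single line through the origin in $\mathbb{Z}^2$.

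Next I would argue that not all of $b_1, b_2, b_3, b_4$ vanish. The middle slice $(b_i)$ is a $2$-by-$2$ subblock of the cross section of $\mathbf{M}$ at the central value of the third index, which is a tame integer tiling. If that cross section is an $N$-tiling with $N \ne 0$, the subblock has determinant $N \ne 0$ and so is nonzero. If it is a tame $0$-tiling, then $b_i = K a d$ with the relevant first coordinates coming from consecutive vertices of minimal paths in some $\mathscr{F}_R$ and $\mathscr{F}_S$; two consecutive vertices of a path satisfy the defining edge relation with value $R \ge 1$ (respectively $S \ge 1$), so no two consecutive first coordinates can both vanish, and the $2$-by-$2$ slice is again nonzero. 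Either way some $b_{i_1} \ne 0$.

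Finally I would set $\lambda = e_{i_1}/b_{i_1}$. Proportionality gives $b_{i_1} e_i = b_i e_{i_1}$, hence $e_i = \lambda b_i$ for every $i$, establishing existence of a rational $\lambda$ with $a_i + c_i = \lambda b_i$. Uniqueness is then immediate: any $\lambda'$ with $e_{i_1} = \lambda' b_{i_1}$ must equal $\lambda$ because $b_{i_1} \ne 0$.

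I expect the only delicate point to be the claim that the middle slice is never the zero matrix; this is where the structure of tame integer tilings (including the $0$-tiling case, via the path edge relations) is genuinely used. Everything else is a direct consequence of the synchronised identity together with the observation that proportionality of all $(b_i, e_i)$ plus one nonzero $b_i$ pins down a unique slope $\lambda$.
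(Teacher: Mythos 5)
Your proof is correct, and it takes a genuinely different route from the paper's. The paper deduces existence of $\lambda$ from Lemma~\ref{lemma88}, the $3$-by-$3$-by-$3$ statement, whose proof is a lengthy case analysis on the zero pattern of the middle layer and invokes the synchronised condition only in the residual chequerboard configurations; uniqueness is then dismissed as immediate from the fact that some $b_i\neq 0$. You instead work entirely inside the given $2$-by-$2$-by-$3$ block: all four $1$-by-$1$-by-$3$ pillars pairwise lie within this single block, so the synchronised identity applies directly to all six pairs, giving $b_q(a_p+c_p)=b_p(a_q+c_q)$, whence the vectors $(b_i,\,a_i+c_i)$ are pairwise proportional and any one nonvanishing $b_{i_1}$ pins down $\lambda=(a_{i_1}+c_{i_1})/b_{i_1}$, with $a_i+c_i=\lambda b_i$ holding even at indices where $b_i=0$. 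Your treatment of the nonvanishing of the middle slice is also sound in both cases: a $2$-by-$2$ subblock of an $N$-tiling with $N\neq 0$ has nonzero determinant, while for a tame $0$-tiling $m_{i,j}=Ka_id_j$ the edge relations $a_ib_{i+1}-b_ia_{i+1}=R\neq 0$ and $c_jd_{j+1}-d_jc_{j+1}=S\neq 0$ forbid two consecutive vanishing $a_i$ or two consecutive vanishing $d_j$ (for the second path the relevant coordinates are the denominators $d_j$, not the ``first coordinates'' as you wrote --- a harmless slip). The trade-off: the paper's route costs nothing locally, because Lemma~\ref{lemma88} is needed anyway for Theorem~\ref{theorem69}, where chains of overlapping $3$-by-$3$-by-$3$ blocks are compared and where your direct pairwise argument would break down whenever an intermediate pillar has $(b_i,\,a_i+c_i)=(0,0)$ --- pillars at opposite corners of a $3$-by-$3$ layer do not share a common $2$-by-$2$-by-$3$ subblock, so synchronisation does not relate them directly and proportionality must be propagated through possibly vanishing intermediaries. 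Your route, by contrast, is more self-contained for this particular lemma: it never passes to a $3$-by-$3$-by-$3$ superblock (which the paper's derivation implicitly requires, legitimately, since the index sets have length at least $3$), it uses the synchronised hypothesis exactly where it is cheapest, and it supplies the justification for ``some $b_i\neq 0$'' that the paper leaves unstated.
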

\begin{proof}
The existence of $\lambda$ was established in Lemma~\ref{lemma88}; we have only to prove uniqueness. This is immediate, for we must have $b_i\neq 0$, for some $i\in\{1,2,3,4\}$, in which case $\lambda = (a_i+c_i)/b_i$.
\end{proof}

The following theorem is a version of Theorem~\ref{theorem19} for integer hypertilings rather than tilings. There is a converse to the theorem (with some modifications); we omit this because we do not need it.

\begin{theorem}\label{theorem69}
Let $\mathbf{M}$ be a tame integer hypertiling. Then there are sequences of rationals $(r_i)_{i\in \mathcal{I}^*}$, $(s_j)_{j\in \mathcal{J}^*}$, and $(t_k)_{k\in \mathcal{K}^*}$ such that
\begin{align*}
m_{i-1,j,k}+m_{i+1,j,k}=r_im_{i,j,k},\quad \text{for $i\in \mathcal{I}^*,j\in \mathcal{J}, k\in \mathcal{K}$,}\\
m_{i,j-1,k}+m_{i,j+1,k}=s_jm_{i,j,k},\quad \text{for $i\in \mathcal{I}, j\in \mathcal{J}^*,k\in \mathcal{K}$,}\\
m_{i,j,k-1}+m_{i,j,k+1}=t_km_{i,j,k},\quad \text{for $i\in \mathcal{I}, j\in \mathcal{J},k\in \mathcal{K}^*$.}
\end{align*}
Furthermore, the sequences $r_i$, $s_j$, and $t_k$ are uniquely determined by $\mathbf{M}$.
\end{theorem}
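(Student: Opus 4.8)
The plan is to exploit the symmetry of tame integer hypertilings under permutation of the three coordinate axes: permuting the axes of $\mathbf{M}$ preserves the hyperdeterminant, the synchronised condition, and the collection of cross sections, so it carries tame integer hypertilings to tame integer hypertilings. Hence it suffices to construct the sequence $(t_k)_{k\in\mathcal{K}^*}$ satisfying the third recurrence and to prove its uniqueness; the sequences $(r_i)_{i\in\mathcal{I}^*}$ and $(s_j)_{j\in\mathcal{J}^*}$ then follow by applying the same argument to the hypertilings obtained by permuting the axes.

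Fix $k\in\mathcal{K}^*$. First I would work locally. For each pair of consecutive indices $i,i+1\in\mathcal{I}$ and $j,j+1\in\mathcal{J}$, the corresponding 2-by-2-by-3 subblock with $k$-layers $k-1,k,k+1$ falls under Lemma~\ref{lemma89}, which supplies a \emph{unique} rational $\lambda_{i,j}$ with
\[
m_{i',j',k-1}+m_{i',j',k+1}=\lambda_{i,j}\,m_{i',j',k},\qquad (i',j')\in\{i,i+1\}\times\{j,j+1\}.
\]
The heart of the proof is to show that $\lambda_{i,j}$ does not depend on $(i,j)$. For this I would glue adjacent local values using Lemma~\ref{lemma88}: given a 3-by-3-by-3 subblock with $k$-layers $k-1,k,k+1$, that lemma yields a single rational $\lambda$ for which the recurrence holds at all nine positions of its middle $(i,j)$-grid. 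This $\lambda$ then satisfies the recurrence on each of the four 2-by-2 subgrids, so by the uniqueness in Lemma~\ref{lemma89} it must equal each of the four corresponding $\lambda_{i,j}$. Since $\mathcal{I}$ and $\mathcal{J}$ are intervals of length at least $3$, any two block positions can be joined by a chain of moves changing one coordinate by $1$ at a time, each step lying inside a common 3-by-3-by-3 subblock (extending the block in whichever direction stays within the index set); it follows that all the $\lambda_{i,j}$ coincide, and I would denote the common value by $t_k$.

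With $t_k$ defined this way, the recurrence $m_{i,j,k-1}+m_{i,j,k+1}=t_k m_{i,j,k}$ holds on every 2-by-2 block, and since every $(i,j)\in\mathcal{I}\times\mathcal{J}$ is a corner of such a block, it holds at every position, which establishes existence. For uniqueness I would note that the $k$-th cross section $(m_{i,j,k})_{i,j}$ is a tame integer tiling and is not identically zero: if it is a nonzero $N$-tiling this is clear, and if it is a tame $0$-tiling $m_{i,j}=Ka_id_j$ then neither all the $a_i$ nor all the $d_j$ vanish, since consecutive entries along a path in a Farey graph cannot both be $0$. Choosing $(i,j)$ with $m_{i,j,k}\neq0$ forces $t_k=(m_{i,j,k-1}+m_{i,j,k+1})/m_{i,j,k}$, so the sequence $(t_k)$ is uniquely determined by $\mathbf{M}$.

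I expect the gluing step---promoting the local rationals $\lambda_{i,j}$ to a single $t_k$ independent of position---to be the main obstacle, since this is where the synchronised hypothesis genuinely enters the argument (through Lemma~\ref{lemma88}) and where the connectivity of the index intervals must be handled carefully near the boundary of $\mathcal{I}\times\mathcal{J}$.
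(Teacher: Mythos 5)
Your proposal is correct and follows essentially the same route as the paper: the paper likewise fixes $k\in\mathcal{K}^*$, invokes Lemma~\ref{lemma88} on 3-by-3-by-3 subblocks to obtain local rationals, uses the uniqueness assertion of Lemma~\ref{lemma89} on overlapping subblocks to glue them into a single $t_k$, and handles the other two sequences by symmetry. Your bookkeeping differs only cosmetically (you index the local constants by 2-by-2-by-3 blocks rather than by 3-by-3-by-3 blocks, and you spell out the nonvanishing of a cross section for uniqueness, which the paper leaves implicit in Lemma~\ref{lemma89}).
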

\begin{proof}
 Let us prove the existence and uniqueness of the sequence $t_k$; the other two cases are similar. Consider some integer $k\in \mathcal{K}^*$. Let $M_{i,j}=(A_{k-1}\mid A_k\mid A_{k+1})$, where
\[
A_\ell=\begin{pmatrix}m_{i-1,j-1,\ell} & m_{i-1,j,\ell} & m_{i-1,j+1,\ell}\\ m_{i,j-1,\ell} & m_{i,j,\ell} & m_{i,j+1,\ell}\\ m_{i+1,j-1,\ell} & m_{i+1,j,\ell} & m_{i+1,j+1,\ell}\end{pmatrix}.
\]
By Lemma~\ref{lemma88}, there exists $\lambda_{i,j}\in \mathbb{Q}$ with $m_{p,q,k-1}+m_{p,q,k+1}=\lambda_{i,j}m_{p,q,k}$, for $p\in\{i-1,i,i+1\}$ and $q\in\{j-1,j,j+1\}$. From considering the overlap of the subblocks $M_{i,j}$ and $M_{i,j+1}$ and applying Lemma~\ref{lemma89}, we deduce that $\lambda_{i,j}=\lambda_{i,j+1}$. Arguing in the same way we can see that there exists a rational $t_k$ such that $\lambda_{i,j}=t_k$, for all $i\in \mathcal{I}$, $j\in \mathcal{J}$, which is unique, by Lemma~\ref{lemma89}.
\end{proof}

We remarked in the introduction that there is an action of $\text{SL}_2(\mathbb{Z})^3$ on Bhargava cubes $\mathbb{Z}^2\otimes\mathbb{Z}^2\otimes\mathbb{Z}^2$. Here we need this same action in higher dimensions. Specifically, let us assume that $k$, $\ell$, and $m$ are positive integers, all at least 2. Then $\text{SL}_k(\mathbb{Z})\times\text{SL}_\ell(\mathbb{Z})\times\text{SL}_m(\mathbb{Z})$ acts on $\mathbb{Z}^k\otimes\mathbb{Z}^\ell\otimes\mathbb{Z}^m$ by the formula $\mathbf{M}'=(U,V,W)\mathbf{M}$, where $U\in\text{SL}_k(\mathbb{Z})$, $V\in\text{SL}_\ell(\mathbb{Z})$, $W\in\text{SL}_m(\mathbb{Z})$, and $\mathbf{M},\mathbf{M}'\in \mathbb{Z}^k\otimes\mathbb{Z}^\ell\otimes\mathbb{Z}^m$, and the entries of $\mathbf{M}'$ are
\[
m_{ijk}'= \sum_{(p,q,r)\in \mathcal{I}\times \mathcal{J}\times \mathcal{K}} U_{ip}V_{jq}W_{kr}m_{pqr}.
\]
For convenience, we assume in the next lemma that the index sets of $\mathbf{M}$ and $\mathbf{M}'$ are $\mathcal{I}=\{0,1,\dots,k-1\}$, $\mathcal{J}=\{0,1,\dots,\ell-1\}$, and $\mathcal{K}=\{0,1,\dots,m-1\}$.	

\begin{lemma}\label{lemma28}
Let $\mathbf{M}$ be a $k$-by-$\ell$-by-$m$ tame integer hypertiling. There exists $U\in\textnormal{SL}_k(\mathbb{Z})$, $V\in\textnormal{SL}_\ell(\mathbb{Z})$, and $W\in\textnormal{SL}_m(\mathbb{Z})$ such that the tensor $\mathbf{M}'=(U,V,W)\mathbf{M}$ satisfies $m_{ijk}'=0$ whenever $i$, $j$, or $k$ is greater than 1.
\end{lemma}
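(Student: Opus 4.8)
The plan is to reduce the tensor one direction at a time, using the three-term recurrences of Theorem~\ref{theorem69} to bound the rank in each direction and then invoking the Smith normal form (exactly as in the surjectivity argument for Theorem~\ref{theoremA}) to realise each reduction by an integer unimodular matrix.

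First I would flatten $\mathbf{M}$ in the first direction: regard the slices $m_{i,\cdot,\cdot}$ as the rows of a $k$-by-$\ell m$ integer matrix $M^{(1)}$ with row index $i$. Rearranging the $i$-recurrence of Theorem~\ref{theorem69} to $m_{i+1,j,k}=r_im_{i,j,k}-m_{i-1,j,k}$, valid for $1\le i\le k-2$, expresses every slice with $i\ge 2$ as a rational linear combination of the slices $i=0$ and $i=1$; hence $\textnormal{rank}\, M^{(1)}\le 2$. Writing a Smith normal form $PM^{(1)}Q=D$ with $P\in\textnormal{GL}_k(\mathbb{Z})$, $Q\in\textnormal{GL}_{\ell m}(\mathbb{Z})$, and $D$ diagonal, the rank bound forces $D_{ii}=0$ for $i\ge 2$, so $PM^{(1)}=DQ^{-1}$ has every row of index $\ge 2$ equal to zero. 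Thus the tensor $(P,I,I)\mathbf{M}$ vanishes whenever $i\ge 2$. Absorbing a sign into one of the (now zero) rows of index $\ge 2$ arranges $U:=P\in\textnormal{SL}_k(\mathbb{Z})$ without disturbing the vanishing (and when $k\le 2$ one simply takes $U=I$).

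Next I would repeat the argument in the remaining two directions. The point that makes the iteration work is that the action in one direction touches only its own tensor factor, so it preserves the recurrences in the other directions. Indeed, applying $U$ and using linearity,
\[
m'_{i,j-1,k}+m'_{i,j+1,k}=\sum_p U_{ip}\mleft(m_{p,j-1,k}+m_{p,j+1,k}\mright)=s_jm'_{i,j,k},
\]
and similarly for the $k$-direction. Consequently the $j$- and $k$-slices of $(U,I,I)\mathbf{M}$ still span spaces of dimension at most $2$, and Smith normal form again yields $V\in\textnormal{SL}_\ell(\mathbb{Z})$ killing the slices with $j\ge 2$ and then $W\in\textnormal{SL}_m(\mathbb{Z})$ killing those with $k\ge 2$. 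Since $V$ acts only on $j$ and $W$ only on $k$, neither disturbs the zeros already created in the other directions (a transformation acting on $j$ sends a slice that is identically zero across all $i$ to another such slice). Setting $\mathbf{M}'=(U,V,W)\mathbf{M}$ then gives $m'_{ijk}=0$ whenever any of $i$, $j$, $k$ exceeds $1$.

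The rank computation and the appeal to Smith normal form are routine and parallel the finite case of Theorem~\ref{theoremA}. The step needing the most care, and the conceptual heart of the argument, is the non-interference of the three reductions: this relies on the fact that the recurrence coefficients $r_i$, $s_j$, $t_k$ of Theorem~\ref{theorem69} depend only on their own index, so that the rank-$\le 2$ property in each direction is preserved under the linear $\textnormal{SL}$-actions in the other two. Once that is secured, the three single-direction reductions compose to give the required normal form.
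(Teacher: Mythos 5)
Your proposal is correct and follows essentially the same route as the paper: both derive the slice relations from the three-term recurrences of Theorem~\ref{theorem69} and then use the Smith normal form to realise the reduction by unimodular matrices, with the non-interference of the three directions handled exactly as you describe (the paper applies $(U,V,W)$ simultaneously, which kills all the required entries by the same linearity observation). The only cosmetic difference is that the paper applies the Smith normal form to a small $(k-2)$-by-$k$ integer relation matrix built from the recurrence coefficients, whereas you apply it to the flattened $k$-by-$\ell m$ matrix via the rank-at-most-$2$ bound; both are sound.
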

\begin{proof}
Let $v_i$ denote the $\ell$-by-$m$ integer matrix with entries $m_{iqr}$, for $q=0,1,\dots,\ell-1$ and $r=0,1,\dots,m-1$. By Theorem~\ref{theorem69}, there are integers $\eta_i$, $\lambda_i$, and $\mu_i$ (where $\eta_i\neq 0$) with $\eta_iv_i=\lambda_i v_0+\mu_i v_1$ and $\gcd(\eta_i,\lambda_i,\mu_i)=1$, for $i=2,3,\dots,k-1$. Let $A$ be the $(k-2)$-by-$k$ matrix
\[
A = \begin{pmatrix} -\lambda_2 & -\mu_2 & \eta_2 &&&\\ -\lambda_3 & -\mu_3 & & \eta_3 &&\\   \vdots & \vdots &&& \ddots &\\ -\lambda_{k-1}& - \mu_{k-1}&&&& \eta_{k-1}\end{pmatrix}.
\]
Then $\sum_{p=0}^{k-1}A_{ip}v_p=0$, for $i=0,1,\dots,k-3$. Using the Smith normal form we can find $X\in \text{SL}_{k-2}(\mathbb{Z})$ and $Y\in \text{SL}_{k}(\mathbb{Z})$ with 
\[
A =X \begin{pmatrix}0 & 0 & d_2&&&\\ 0&0&& d_3 &&\\ \vdots&\vdots&&& \ddots & \\  0 & 0&&&& d_{k-1} \end{pmatrix}Y,
\]
for nonzero integers $d_2,d_3,\dots,d_{k-1}$. Hence, for $i\geq 2$, we have
\[
\sum_{p=0}^{k-1}Y_{ip}v_p = \frac{1}{d_{i}}\sum_{p=0}^{k-1} (X^{-1}A)_{i-2,p}v_p = 0.
\]
We choose $U=Y$. Similarly we can find $V\in\textnormal{SL}_\ell(\mathbb{Z})$ and  $W\in\textnormal{SL}_m(\mathbb{Z})$ with $\sum_q V_{jq}m_{iqk}=0$ (for $j\geq 2$) and $\sum_r W_{kr}m_{ijr}=0$ (for $k\geq 2$) . Then $(U,V,W)$ is the required triple.
\end{proof}

We need one final lemma before we can prove Theorem~\ref{theoremD}.

\begin{lemma}\label{lemma47}
Let $A$ be a nonsingular Bhargava cube and let $M$ be a 3-by-2-by-2 subblock of a tame integer hypertiling with entries $m_{ijk}=\sum_{p=0}^1 u_{ip}A_{pjk}$, for $i\in\{0,1,2\}$ and $j,k\in\{0,1\}$. Then
$u_{00}u_{11}-u_{01}u_{10}=u_{10}u_{21}-u_{11}u_{20}$.
\end{lemma}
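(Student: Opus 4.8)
The plan is to reduce the claimed identity — that two consecutive $2\times2$ minors of the $3\times2$ array $(u_{ip})$ coincide — to the synchronised condition guaranteed by tameness, with the nonsingularity of $A$ entering only at the very end to cancel a common factor. First I would record the two ``horizontal'' slices of the cube by writing $\alpha_{jk}=A_{0jk}$ and $\beta_{jk}=A_{1jk}$, so that the defining formula reads $m_{ijk}=u_{i0}\alpha_{jk}+u_{i1}\beta_{jk}$ for all $i\in\{0,1,2\}$ and $j,k\in\{0,1\}$.

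Next, for any two distinct columns $(j_1,k_1)$ and $(j_2,k_2)$ I would form the $3\times2$ matrix with rows indexed by $i$ and entries $m_{i,j_1,k_1}$ and $m_{i,j_2,k_2}$, and expand its two stacked $2\times2$ minors. Substituting the bilinear expression for $m_{ijk}$ and cancelling the $\alpha\alpha$ and $\beta\beta$ contributions, a short computation gives
\[
m_{0,j_1,k_1}m_{1,j_2,k_2}-m_{0,j_2,k_2}m_{1,j_1,k_1}=(u_{00}u_{11}-u_{01}u_{10})\,D,
\]
\[
m_{1,j_1,k_1}m_{2,j_2,k_2}-m_{1,j_2,k_2}m_{2,j_1,k_1}=(u_{10}u_{21}-u_{11}u_{20})\,D,
\]
where $D=\alpha_{j_1k_1}\beta_{j_2k_2}-\alpha_{j_2k_2}\beta_{j_1k_1}$ is common to both lines. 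Because $M$ is a $3\times2\times2$ subblock of the tame hypertiling $\mathbf{M}$, the synchronised property applied to this very $3\times2$ matrix asserts precisely that its top minor equals its bottom minor, and hence $(u_{00}u_{11}-u_{01}u_{10})\,D=(u_{10}u_{21}-u_{11}u_{20})\,D$.

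The remaining, and principal, obstacle is to exhibit a choice of two columns for which $D\neq0$, so that $D$ may be cancelled. Here the quantities $D$ are exactly the $2\times2$ minors of the $2\times4$ matrix with rows $(\alpha_{00},\alpha_{10},\alpha_{01},\alpha_{11})$ and $(\beta_{00},\beta_{10},\beta_{01},\beta_{11})$, and these two rows are the first and second rows of the unfolded cube $(M_1\mid M_2)$, with $M_1=A_{\cdot\cdot0}$ and $M_2=A_{\cdot\cdot1}$. If every $D$ vanished, the two rows would be proportional, so there would be a nonzero covector $(s,t)$ annihilating the rows of $M_1$ and of $M_2$ simultaneously; then $\det M_1=\det M_2=0$, and the same combination annihilates the rows of $M_1\pm M_2$, so $\det(M_1+M_2)=\det(M_1-M_2)=0$. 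Substituting these into the defining formula for $\Det$ would force $\Det A=0$, contradicting nonsingularity of $A$. Therefore some $D\neq0$, and cancelling it in the displayed equality yields $u_{00}u_{11}-u_{01}u_{10}=u_{10}u_{21}-u_{11}u_{20}$, as required.
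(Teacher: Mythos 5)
Your proof is correct, and it follows the same outline as the paper's: apply the synchronised property to the $3$-by-$2$ matrices formed by stacking two of the four $1$-by-$1$-by-$3$ subblocks of $M$, factor each of the resulting $2\times 2$ minors as a $u$-minor times an $A$-minor using bilinearity, and then use nonsingularity of $A$ to produce a nonzero $A$-minor that can be cancelled. But your handling of the final step is genuinely different, and in fact stronger. The paper pairs only \emph{antipodal} columns, setting $(j',k')=(1-j,1-k)$, so the only $A$-minors it has available are $P=A_{000}A_{111}-A_{100}A_{011}$ and $Q=A_{001}A_{110}-A_{101}A_{010}$ (up to sign), and it asserts that $\Det A\neq 0$ forces one of these to be nonzero. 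Slicing the cube as $(M_1\mid M_2)$ along the third index with rows indexed by $p$, the formula from the introduction reads $\Det A=(P+Q)^2-4\det M_1\det M_2$, and the asserted inference fails for some nonsingular cubes: for instance,
\[
A=\mleft(\begin{matrix}1&0\\0&1\end{matrix}\ \middle\vert\ \begin{matrix}0&1\\1&0\end{matrix}\mright)
\]
has $P=Q=0$ while $\Det A=4$. Your argument instead quantifies over \emph{all} six pairs of columns, so your $D$ ranges over every $2\times 2$ minor of the $2$-by-$4$ matrix with rows $(A_{0jk})_{j,k}$ and $(A_{1jk})_{j,k}$, and your rank observation --- if all six minors vanish, a nonzero covector annihilates $M_1$, $M_2$, and $M_1\pm M_2$ simultaneously, forcing $\Det A=0$ --- is exactly what legitimises the cancellation for \emph{every} nonsingular cube. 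So your proposal is not merely a stylistic variant: it closes a genuine gap in the paper's own proof, while the lemma itself, and its application in Theorem~\ref{theorem72}, are unaffected, since your argument establishes the statement as written.
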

\begin{proof}
Since $M$ is a subblock of a tame integer hypertiling we have 
\[
m_{0jk}m_{1j'k'}-m_{1jk}m_{0j'k'}=m_{1jk}m_{2j'k'}-m_{2jk}m_{1j'k'},
\]
for $j,j',k,k'\in\{0,1\}$. Now, $m_{0jk}m_{1j'k'}-m_{1jk}m_{0j'k'}$ equals
\[
\hspace*{-15mm}(u_{00}A_{0jk}+u_{01}A_{1jk})(u_{10}A_{0j'k'}+u_{11}A_{1j'k'})-(u_{10}A_{0jk}+u_{11}A_{1jk})(u_{00}A_{0j'k'}+u_{01}A_{1j'k'}),
\]
and this simplifies to 
\[
(u_{00}u_{11}-u_{01}u_{10})(A_{0jk}A_{1j'k'}-A_{1jk}A_{0j'k'}).
\]
Similarly $m_{1jk}m_{2j'k'}-m_{2jk}m_{1j'k'}=(u_{10}u_{21}-u_{11}u_{20})(A_{0jk}A_{1j'k'}-A_{1jk}A_{0j'k'})$. Since $\Det A\neq 0$, we see from the formula for $\Det A$ stated in the introduction that there must be a quadruple $j,j',k,k'\in\{0,1\}$ for which $A_{0jk}A_{1j'k'}-A_{1jk}A_{0j'k'}\neq 0$. Consequently, $u_{00}u_{11}-u_{01}u_{10}=u_{10}u_{21}-u_{11}u_{20}$, as required.
\end{proof}

We can now show that any tame integer hypertiling can be described by three paths in Farey graphs, the reverse statement of Theorem~\ref{theorem71}.

\begin{theorem}\label{theorem72}
Let $\mathbf{M}$ be a tame N-hypertiling. Then there are positive integers $R$, $S$, and $T$, minimal paths $u_{i0}/u_{i1}$, $v_{j0}/v_{j1}$, and $w_{k0}/w_{k1}$ in $\mathscr{F}_R$, $\mathscr{F}_S$, and $\mathscr{F}_T$, and a Bhargava cube $A$ with $N=R^2S^2T^2\Det A$ such that
\[
m_{ijk} = \sum_{p,q,r=0}^1A_{pqr}u_{ip}v_{jq}w_{kr},
\]
for $i\in \mathcal{I}, j\in \mathcal{J}, k\in \mathcal{K}$.
\end{theorem}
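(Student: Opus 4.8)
The plan is to prove this by the same two-stage strategy used for the surjectivity half of Theorem~\ref{theoremA}: first treat a finite $k$-by-$\ell$-by-$m$ hypertiling, where the structure can be extracted directly from a Smith-normal-form reduction, and then pass to the infinite case by exhaustion.

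For the finite case, I would start by applying Lemma~\ref{lemma28} to obtain $U\in\textnormal{SL}_k(\mathbb{Z})$, $V\in\textnormal{SL}_\ell(\mathbb{Z})$, and $W\in\textnormal{SL}_m(\mathbb{Z})$ such that $\mathbf{M}'=(U,V,W)\mathbf{M}$ is supported on its $2$-by-$2$-by-$2$ corner. Let $A$ be that corner cube, $A_{pqr}=m'_{pqr}$ for $p,q,r\in\{0,1\}$, and define $u_{ip}=(U^{-1})_{ip}$, $v_{jq}=(V^{-1})_{jq}$, $w_{kr}=(W^{-1})_{kr}$ for $p,q,r\in\{0,1\}$ (the data coming from the first two columns of the inverse matrices). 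Since $\mathbf{M}=(U^{-1},V^{-1},W^{-1})\mathbf{M}'$ and $\mathbf{M}'$ vanishes off the corner, this immediately yields the target identity $m_{ijk}=\sum_{p,q,r=0}^1A_{pqr}u_{ip}v_{jq}w_{kr}$.

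It then remains to check that the three sequences are minimal paths in Farey graphs and that the hyperdeterminant relation holds. Minimality is automatic, since the first two columns of a matrix in $\textnormal{SL}_k(\mathbb{Z})$ have $2$-by-$2$ minors with greatest common divisor $1$. For the constant-determinant property I would invoke Lemma~\ref{lemma47}: restricting to $j,k\in\{0,1\}$ gives $m_{ijk}=\sum_{p=0}^1u_{ip}A_{pjk}$, and applying the lemma to each $3$-by-$2$-by-$2$ window in the $i$-direction shows that $u_{i0}u_{i+1,1}-u_{i1}u_{i+1,0}$ is independent of $i$; call this common value $R$, and define $S$, $T$ analogously. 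Lemma~\ref{lemma47} requires $A$ nonsingular, which I would establish first: the corner of $\mathbf{M}$ equals the image of $A$ under the top-left $2$-by-$2$ blocks $\widetilde U,\widetilde V,\widetilde W$ of $U^{-1},V^{-1},W^{-1}$, so its hyperdeterminant is $(\det\widetilde U\det\widetilde V\det\widetilde W)^2\Det A$; since this corner is a $2$-by-$2$-by-$2$ subblock of the $N$-hypertiling it has hyperdeterminant $N\neq0$, forcing $\Det A\neq0$. The same computation delivers the hyperdeterminant relation at once: $\det\widetilde U=u_{00}u_{11}-u_{01}u_{10}=R$ and likewise $\det\widetilde V=S$, $\det\widetilde W=T$, so $N=(RST)^2\Det A=R^2S^2T^2\Det A$. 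After a harmless swap of the two layers in any direction whose determinant came out negative (which negates that determinant, is absorbed into $A$ as $(P,I,I)A$ without changing $\Det A$ or the identity, and preserves minimality), I may take $R,S,T>0$, so the three sequences are genuine minimal paths in $\mathscr{F}_R$, $\mathscr{F}_S$, $\mathscr{F}_T$.

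For the infinite case I would exhaust $\mathbf{M}$ by an increasing sequence of finite subblocks $\mathbf{M}_n$ and apply the finite case to each, obtaining cubes, parameters $(R_n,S_n,T_n)$, and triples of paths. The key to gluing is Theorem~\ref{theorem69}: the recurrence coefficients $r_i$, $s_j$, $t_k$ are globally and uniquely determined by $\mathbf{M}$, and by Lemma~\ref{lemmak} they are precisely the itineraries of the $u$-, $v$-, and $w$-paths of every finite representation. Hence overlapping finite representations have paths with identical itineraries, so once the parameters agree Theorem~\ref{theoremHD} makes their paths $\textnormal{SL}_2(\mathbb{Z})$-equivalent; I would then use the $\textnormal{SL}_2(\mathbb{Z})^3$ action (transforming the cube correspondingly) to align consecutive representations on their overlaps and take the union to define global paths and a single cube, exactly as in Section~\ref{section83}. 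I expect the main obstacle to be this gluing step — specifically, showing that $(R_n,S_n,T_n)$ stabilise (via divisibility and monotonicity as the window grows) so that the alignment by Theorem~\ref{theoremHD} becomes available, and verifying that the successive $\textnormal{SL}_2(\mathbb{Z})^3$-adjustments are mutually compatible; the finite case itself is essentially mechanical once Lemmas~\ref{lemma28} and~\ref{lemma47} are in hand.
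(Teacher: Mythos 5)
Your proposal follows the paper's proof almost step for step: Lemma~\ref{lemma28} to push the hypertiling onto its 2-by-2-by-2 corner, minimality read off from the first two columns of unimodular matrices, Lemma~\ref{lemma47} to make consecutive minors constant, a sign adjustment absorbed into the cube, and the same exhaustion argument in the infinite case. Indeed your gluing step is spelled out slightly more explicitly than the paper's: you say why overlapping representations have matching itineraries (Theorem~\ref{theorem69} plus Lemma~\ref{lemmak}) before invoking Theorem~\ref{theoremHD}, whereas the paper leaves this implicit; and the paper stabilises $(R_n,S_n,T_n)$ by passing to subsequences rather than by divisibility and monotonicity, an immaterial difference since the parameters divide fixed data determined by $N$.

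There is, however, one local error in your finite case. Restricting to $j,k\in\{0,1\}$ does \emph{not} give $m_{ijk}=\sum_{p=0}^1 u_{ip}A_{pjk}$: that identity would force the corner blocks $\widetilde V$ and $\widetilde W$ to be identity matrices, which there is no reason to expect. The correct identity is $m_{ijk}=\sum_{p=0}^1 u_{ip}B_{pjk}$, where $B_{pjk}=\sum_{q,r=0}^1 A_{pqr}v_{jq}w_{kr}$ is the cube obtained by contracting $A$ against the first two vertices of the $v$- and $w$-paths; this is precisely the auxiliary cube $B$ that the paper introduces before applying Lemma~\ref{lemma47}, and you have conflated it with $A$. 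The slip is harmless because your own corner computation repairs it: $\Det B=(\det\widetilde V\det\widetilde W)^2\Det A$, and since the corner of $\mathbf{M}$ is a 2-by-2-by-2 subblock of hyperdeterminant $N\neq 0$, all of $\det\widetilde U$, $\det\widetilde V$, $\det\widetilde W$, and $\Det A$ are nonzero, so $B$ is nonsingular and Lemma~\ref{lemma47} applies to each 3-by-2-by-2 window exactly as you intend. With $B$ substituted for $A$ at that one step, your argument coincides with the paper's; your derivation of $N=(RST)^2\Det A$ via the corner factorisation, identifying $R=\det\widetilde U$ and so on, is in fact a little more explicit than the paper, which asserts that relation only as a consequence.
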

\begin{proof}
Let us assume for the moment that $\mathbf{M}$ is of finite size $k$-by-$\ell$-by-$m$  indexed by $\mathcal{I}=\{0,1,\dots,k-1\}$, $\mathcal{J}=\{0,1,\dots,\ell-1\}$, and $\mathcal{K}=\{0,1,\dots,m-1\}$.

By Lemma~\ref{lemma28}, we can find $U\in\textnormal{SL}_k(\mathbb{Z})$, $V\in\textnormal{SL}_\ell(\mathbb{Z})$, and $W\in\textnormal{SL}_m(\mathbb{Z})$ such that $\mathbf{M}'=(U^{-1},V^{-1},W^{-1})\mathbf{M}$ satisfies $m_{ijk}'=0$ whenever $i$, $j$, or $k$ is greater than 1. Let $(u_{i0},u_{i1})=(U_{i0},U_{i1})$ (the first two columns of $U$), $(v_{j0},v_{j1})=(V_{j0},V_{j1})$ (the first two columns of $V$), and $(w_{k0},w_{k1})=(W_{k0},W_{k1})$ (the first two columns of $W$). Let $A$ be the Bhargava cube with $A_{pqr}=m_{pqr}'$, for $p,q,r\in\{0,1\}$. We have $\mathbf{M}=(U,V,W)\mathbf{M}'$. So 
\[
m_{ijk} = \sum_{p,q,r=0}^1A_{pqr}u_{ip}v_{jq}w_{kr}.
\]
We must show that $u_{i0}/u_{i1}$, $v_{j0}/v_{j1}$, and $w_{k0}/w_{k1}$ are minimal paths in $\mathscr{F}_R$, $\mathscr{F}_S$, and $\mathscr{F}_T$, for some positive integers $R$, $S$, and $T$. Let us do this for $u_{i0}/u_{i1}$. Let $B$ be the Bhargava cube with entries $B_{ijk}=\sum_{q,r=0}^1A_{iqr}v_{jq}w_{kr}$, for $i,j,k\in\{0,1\}$. Then $m_{ijk}=\sum_{p=0}^1B_{pjk}u_{ip}$, so $B$ is nonsingular, because $\mathbf{M}$ is an $N$-hypertiling. By Lemma~\ref{lemma47}, we have $u_{i-1,0}u_{i,1}-u_{i-1,1}u_{i,0}=u_{i,0}u_{i+1,1}-u_{i,1}u_{i+1,0}$, for all $i\in\mathcal{I}^*$. By replacing $u_{i0}$ by $-u_{i0}$ for all indices $i$, if need be (and adjusting $A$ accordingly), we can assume that in fact $u_{i-1,0}u_{i,1}-u_{i-1,1}u_{i,0}=R$, for some positive integer $R$ and all $i\in\mathcal{I}'$ (where $R\neq0$ because $\mathbf{M}$ is an $N$-hypertiling). Hence $u_{i0}/u_{i1}$ is a path in $\mathscr{F}_R$. It is a minimal path because it is formed from the first two columns of an element of $\text{SL}_k(\mathbb{Z})$.  

Similar arguments show that $v_{j0}/v_{j1}$ and $w_{k0}/w_{k1}$ are minimal paths in $\mathscr{F}_S$ and $\mathscr{F}_T$, for some positive integers $S$ and $T$. As a consequence, we have  $N=R^2S^2T^2\Det A$, and this concludes the proof in the finite case. 

The argument to go from the finite case to the infinite case is similar to that used in proving Theorem~\ref{theoremA}. Let $\mathbf{M}\colon \mathcal{I}\times \mathcal{J}\times\mathcal{K}\longrightarrow\mathbb{Z}$ be an infinite tame $N$-hypertiling. Let $\mathcal{I}_1\subset \mathcal{I}_2\subset\dotsb$, $\mathcal{J}_1\subset \mathcal{J}_2\subset\dotsb$, and $\mathcal{K}_1\subset \mathcal{K}_2\subset\dotsb$ be sequences of finite intervals with limits $\mathcal{I}$, $\mathcal{J}$, and $\mathcal{K}$, respectively, and let $\mathbf{M}_n$ be the finite tame $N$-hypertiling that is the restriction of $\mathbf{M}$ to $\mathcal{I}_n\times \mathcal{J}_n\times \mathcal{K}_n$. We define $R_n$, $S_n$, and $T_n$ to be the dimensions of the graphs for the corresponding minimal paths of $\mathbf{M}_n$. By passing to subsequences we can assume that in fact all these sequences are constant (since they are all divisors of $N$), with values $R$, $S$, and $T$. 

It follows that we can find minimal paths $u_{i0}^{(n)}/u_{i1}^{(n)}$, $v_{j0}^{(n)}/v_{j1}^{(n)}$, and $w_{k0}^{(n)}/w_{k1}^{(n)}$ in $\mathscr{F}_R$, $\mathscr{F}_S$, and $\mathscr{F}_T$ and a Bhargava cube $A^{(n)}$ with
\[
m_{ijk} = \sum_{p,q,r=0}^1A^{(n)}_{pqr}u_{ip}^{(n)}v_{jq}^{(n)}w_{kr}^{(n)}, \quad\text{for $(i,j,k)\in\mathcal{I}_n\times\mathcal{J}_n\times\mathcal{K}_n$.}
\]
By Theorem~\ref{theoremHD} we can assume that in fact $u_{i0}^{(n+1)}/u_{i1}^{(n+1)}$ is equal to $u_{i0}^{(n)}/u_{i1}^{(n)}$ on $\mathcal{I}_n$, after applying an element of $\text{SL}_2(\mathbb{Z})$ (which changes $A^{(n+1)}$). We define $u_{i0}/u_{i1}$ to be the termwise limiting value of this process, and we define the other paths similarly. Then, from considering the restriction of $\mathbf{M}$ to $\{0,1\}^3$, we can see that the sequence of (adjusted) Bhargava cubes $A^{(n)}$ is constant, giving us the required decomposition.
\end{proof}

Together Theorems~\ref{theorem71} and~\ref{theorem72} imply Theorem~\ref{theoremD}.

\section{Proof of Theorem~\ref{theoremC}}\label{section44}

Theorem~\ref{theoremC} is a stronger version of Theorem~\ref{theoremD} for tame 1-hypertilings. Key to this is the fact that all Bhargava cubes of hyperdeterminant 1 are $\textnormal{SL}_2(\mathbb{Z})^3$-equivalent. To prove this, we define
\[
\mathbf{I}=\mleft(\begin{matrix}1 & 0 \\ 0 & 0 \end{matrix}\ \middle\vert\  \begin{matrix}0 & 0 \\ 0 & 1 \end{matrix}\mright),
\]
which has hyperdeterminant 1.

\begin{theorem}\label{theorem43}
Every Bhargava cube of hyperdeterminant 1 is $\textnormal{SL}_2(\mathbb{Z})^3$-equivalent to $\mathbf{I}$.
\end{theorem}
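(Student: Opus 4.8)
The plan is to route all three $\text{SL}_2(\mathbb{Z})$ actions through the binary quadratic form attached to the cube. Writing $M=(M_1\mid M_2)$, I would set $q(x,y)=-\det(M_1x-M_2y)=-Ax^2-Bxy-Cy^2$, where $A=\det M_1$, $C=\det M_2$, and $B=-\bigl((m_{000}m_{111}-m_{011}m_{100})+(m_{001}m_{110}-m_{010}m_{101})\bigr)$. Expanding $\det(M_1x-M_2y)$ identifies the discriminant $B^2-4AC$ with $\Det M$, so here it equals $1$; as the discriminant is divisible by the square of the content of $q$, the form $q$ is primitive. The three factors of $\text{SL}_2(\mathbb{Z})^3$ interact with $q$ in complementary ways: the first and second factors act by $M_i\mapsto UM_iV$ with $U,V\in\text{SL}_2(\mathbb{Z})$, which multiplies $\det(M_1x-M_2y)$ by $\det U\det V=1$ and so fixes $q$ identically, while the third factor $C\in\text{SL}_2(\mathbb{Z})$ reindexes $(M_1,M_2)$ and induces on $q$ an $\text{SL}_2(\mathbb{Z})$-substitution in $(x,y)$, with every substitution realised as $C$ ranges over $\text{SL}_2(\mathbb{Z})$.

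First I would use the third factor to normalise $q$. Being primitive of square discriminant $1$, the form $q$ represents $0$ at a primitive vector, so an $\text{SL}_2(\mathbb{Z})$-substitution makes $A=0$; then $B^2=1$, a unipotent substitution clears $C$, and one is left with $q=Bxy$ where $B=\pm1$, the rotation $\left(\begin{smallmatrix}0&1\\-1&0\end{smallmatrix}\right)$ converting $-xy$ into $xy$ if necessary. Pulling this substitution back to the third factor of $\text{SL}_2(\mathbb{Z})^3$, I may assume $q=xy$; equivalently $\det M_1=\det M_2=0$ and $\det(M_1+M_2)=1$.

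Next I would use the first two factors, which now leave $q$ untouched, to reduce the slices. Both $M_1$ and $M_2$ have rank one. By Smith normal form over $\text{SL}_2(\mathbb{Z})\times\text{SL}_2(\mathbb{Z})$ I would bring $M_1$ to $\left(\begin{smallmatrix}\delta&0\\0&0\end{smallmatrix}\right)$ with $\delta\ge 0$; if $h$ denotes the lower-right entry of the transformed $M_2$, then $\det(M_1+M_2)=\delta h=1$, forcing $\delta=h=1$, so $M_1=\left(\begin{smallmatrix}1&0\\0&0\end{smallmatrix}\right)$ and $M_2=\left(\begin{smallmatrix}e&f\\g&1\end{smallmatrix}\right)$ with $e=fg$ coming from $\det M_2=0$. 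Finally, the pair $U=\left(\begin{smallmatrix}1&-f\\0&1\end{smallmatrix}\right)$, $V=\left(\begin{smallmatrix}1&0\\-g&1\end{smallmatrix}\right)$ fixes $M_1$ and, using $e-fg=0$, sends $M_2\mapsto\left(\begin{smallmatrix}0&0\\0&1\end{smallmatrix}\right)$. The cube is then $\mathbf{I}$, and composing the three stages yields the explicit triple in $\text{SL}_2(\mathbb{Z})^3$ advertised in the introduction.

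The main obstacle is bookkeeping rather than depth: I must track the second slice $M_2$ throughout the normalisation of $q$ and of $M_1$, and ensure that every reduction is effected by orientation-preserving (determinant $+1$) matrices rather than by $\text{GL}_2(\mathbb{Z})$, which is why $-I$ and the rotation $\left(\begin{smallmatrix}0&1\\-1&0\end{smallmatrix}\right)$ must be invoked at the right moments. The one genuinely structural input is that a primitive form of discriminant $1$ is $\text{SL}_2(\mathbb{Z})$-equivalent to $xy$ — the elementary splitting of a form of square discriminant — and this is precisely what powers the first normalisation.
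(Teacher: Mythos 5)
Your proposal is correct, and it takes a genuinely different route from the paper's. You push everything through the binary quadratic form $q(x,y)=-\det(M_1x-M_2y)$ attached to one direction of slicing---one of Bhargava's three associated forms---whose discriminant you correctly identify with $\Det M$, and you use the elementary fact (which you in effect prove inline: primitive representation of $0$, then a unipotent substitution and the rotation) that a primitive form of discriminant $1$ is $\text{SL}_2(\mathbb{Z})$-equivalent to $xy$; this forces $\det M_1=\det M_2=0$ and $\det(M_1+M_2)=1$, after which rank-one Smith normal form and a final unipotent pair finish the job. One point worth making explicit: Smith normal form is a $\text{GL}_2(\mathbb{Z})\times\text{GL}_2(\mathbb{Z})$ statement, and your reduction of $M_1$ to $\bigl(\begin{smallmatrix}\delta&0\\0&0\end{smallmatrix}\bigr)$ stays inside $\text{SL}_2\times\text{SL}_2$ only because the second elementary divisor vanishes, so determinant signs can be absorbed into the zero row and column---this works here but deserves a sentence. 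The paper proceeds in the opposite order: it normalises the layers first (Smith form on the top layer, then the third factor), arrives at a cube with parameters $\alpha,\beta,\gamma,\delta$ satisfying $\delta^2+4\alpha\beta\gamma=1$, and then \emph{constructs} explicit matrices $U_\alpha,U_\beta,U_\gamma\in\text{SL}_2(\mathbb{Z})$ from the gcds of $\alpha,\beta,\gamma$ with $(1\mp\delta)/2$, repairing $\text{GL}_2$ sign issues at the end via $(J,J,J)$. Indeed the paper remarks that the theorem ``can be achieved using the methods of Bhargava'' but deliberately opts for explicit triples instead; your proof is precisely that Bhargava-style alternative. What each buys: yours is shorter and more conceptual, avoids the delicate gcd-factorisation and sign bookkeeping, and exposes the statement as a class-number-one fact for discriminant $1$ (hence suggests how the picture changes for general $\Det$); the paper's argument yields completely explicit equivalence matrices and remains self-contained matrix arithmetic. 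Two cosmetic items to tidy in a final write-up: with the paper's convention the action on slices is $M_k\mapsto AM_kB^T$, so your right multiplier $V$ corresponds to the second factor $V^T$; and the substitution induced on $q$ by the third factor $C$ is by $\bigl(\begin{smallmatrix}C_{00}&-C_{10}\\-C_{01}&C_{11}\end{smallmatrix}\bigr)$, which does indeed range over all of $\text{SL}_2(\mathbb{Z})$, as you assert.
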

\begin{proof}
Let $A$ be a Bhargava cube of hyperdeterminant 1. We will first prove that $A$ is $\textnormal{GL}_2(\mathbb{Z})^3$-equivalent to $\mathbf{I}$. The greatest common divisor of the four entries in the top layer of $A$ is 1 because $\Det A=1$. From the Smith normal form for 2-by-2 integer matrices, we can apply an element $\textnormal{SL}_2(\mathbb{Z})\times\textnormal{SL}_2(\mathbb{Z})\times\{I\}$ (where $I$ is the identity matrix) to reduce the top layer of $A$ to the form
\[
\begin{pmatrix}1 & 0 \\ 0 & \ast \end{pmatrix}.
\]
Then, after applying an element of $\{I\}\times\{I\}\times \textnormal{SL}_2(\mathbb{Z})$, we can reduce $A$ to the form 
\[
\mleft(\begin{matrix}1 & 0 \\ 0 & \alpha \end{matrix}\quad \middle\vert\quad \begin{matrix}0 & \beta \\ \gamma& \delta \end{matrix}\mright),
\]
with hyperdeterminant $\delta^2+4\alpha\beta\gamma=1$. Observe that
\[
\alpha\beta\gamma = \mleft(\frac{1-\delta}{2}\mright)\mleft(\frac{1+\delta}{2}\mright).
\]
The result is easily established if one of $\alpha$, $\beta$, or $\gamma$ is 0, so let us assume that all three are nonzero, in which case $\delta\neq \pm 1$. Let 
\[
Q=\begin{pmatrix*}[r] 1 & 0\\ 0 &\!\!-1\end{pmatrix*}.
\]
After applying one of the triples $(Q,I,I)$, $(I,Q,I)$, or $(I,I,Q)$, if need be, we can assume that $\alpha$, $\beta$, $\gamma$, and $\delta$ are all negative. We define
\begin{alignat*}{2}
&g_\alpha = \gcd(\alpha,(1-\delta)/2),&& \qquad h_\alpha= -\gcd(\alpha,(1+\delta)/2),\\
&g_\beta = \gcd(\beta,(1-\delta)/2),&& \qquad  h_\beta= -\gcd(\beta,(1+\delta)/2),\\
&g_\gamma = \gcd(\gamma,(1-\delta)/2),&& \qquad  h_\gamma= -\gcd(\gamma,(1+\delta)/2).
\end{alignat*}
Then $g_\alpha,g_\beta,g_\gamma>0$ and $h_\alpha,h_\beta,h_\gamma<0$. Notice that $\alpha/g_\alpha$ and $(1-\delta)/(2g_\alpha)$ are coprime, and 
\[
\mleft(\frac{1-\delta}{2g_\alpha}\mright)\mleft(\frac{1+\delta}{2}\mright)
\]
is divisible by $\alpha/g_\alpha$. Hence $(1+\delta)/2$ is divisible by $\alpha/g_\alpha$, and $\alpha$ is divisible by $\alpha/g_\alpha$ also, so $\alpha/g_\alpha \mid h_\alpha$. Hence $\alpha \mid g_\alpha
h_\alpha$. Now, $(1-\delta)/2$ and $(1+\delta)/2$ are coprime, so $g_\alpha h_\alpha \mid \alpha$. Hence $g_\alpha h_\alpha = \alpha$. Similarly, $g_\beta h_\beta = \beta$ and $g_\gamma h_\gamma = \gamma$.

Next, we have $g_\alpha g_\beta g_\gamma h_\alpha h_\beta h_\gamma= (1-\delta)/2 \times (1+\delta)/2$. Each of $g_\alpha$, $g_\beta$, and $g_\gamma$ is coprime with $(1+\delta)/2$, so we must have
\[
g_\alpha g_\beta g_\gamma = \frac{1-\delta}{2}\quad\text{and}\quad h_\alpha h_\beta h_\gamma = \frac{1+\delta}{2}.
\]
We now define
\[
\renewcommand*{\arraystretch}{1.3}
U_\alpha=\begin{pmatrix}h_\alpha & g_\alpha \\ \frac{\delta-1}{2g_\alpha} & \frac{\delta+1}{2h_\alpha}\end{pmatrix}, \quad
U_\beta=\begin{pmatrix}h_\beta &  g_\beta \\ \frac{\delta-1}{2g_\beta}& \frac{\delta+1}{2h_\beta}\end{pmatrix}, \quad
U_\gamma=\begin{pmatrix}h_\gamma &g_\gamma \\ \frac{\delta-1}{2g_\gamma} & \frac{\delta+1}{2h_\gamma}\end{pmatrix}.
\]
Each of these matrices belongs to $\text{SL}_2(\mathbb{Z})$, and one can check that $(U_\alpha,U_\beta,U_\gamma)$ maps 
\begin{equation*}
\mleft(\begin{matrix}1 & 0 \\ 0 & 0 \end{matrix}\quad \middle\vert\quad \begin{matrix}0 & 0 \\ 0 & 1 \end{matrix}\mright)
\quad\text{to}\quad
\mleft(\begin{matrix}1 & 0 \\ 0 & \alpha \end{matrix}\quad \middle\vert\quad \begin{matrix}0 & \beta \\ \gamma& \delta \end{matrix}\mright). 
\end{equation*}
To summarise, we have found a triple $(V_\alpha,V_\beta,V_\gamma)\in \text{GL}_2(\mathbb{Z})^3$ with $(V_\alpha,V_\beta,V_\gamma)A=\mathbf{I}$. By post-composing with some of the triples $(Q,I,I)$, $(I,Q,I)$, or $(I,I,Q)$ we can find a triple $(W_\alpha,W_\beta,W_\gamma)\in \text{SL}_2(\mathbb{Z})^3$ for which $(W_\alpha,W_\beta,W_\gamma)A$ is equal to either $\mathbf{I}$ or 
\[
\mathbf{I}^\dagger=\mleft(\begin{matrix}1 & 0 \\ 0 & 0 \end{matrix}\ \middle\vert\  \begin{matrix*}[r]0 & 0 \\ 0 & \!\!\!-1 \end{matrix*}\mright).
\]
However, $(J,J,J)\mathbf{I}=\mathbf{I}^\dagger$, where, as usual 
\[
J=\begin{pmatrix*}[r]0 & 1 \\ \!-1 & 0\end{pmatrix*},
\]
so all Bhargava cubes of hyperdeterminant 1 are indeed $\text{SL}_2(\mathbb{Z})^3$-equivalent, as required.
\end{proof}

We can calculate the stabilizer of $\mathbf{I}$ in $\textnormal{SL}_2(\mathbb{Z})^3$ explicitly.

\begin{theorem}\label{theorem62}
The stabilizer of $\mathbf{I}$ in $\textnormal{SL}_2(\mathbb{Z})^3$ is the group
\[
G=\{(I,I,I),(I,-I,-I),(-I,I,-I),(-I,-I,I)\}.
\]
\end{theorem}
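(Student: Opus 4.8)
The plan is to compute the stabilizer directly, exploiting that $\mathbf{I}$, as a tensor, is $e_0\otimes e_0\otimes e_0+e_1\otimes e_1\otimes e_1$, where $e_0,e_1$ are the standard basis vectors of $\mathbb{Z}^2$. Because the $\textnormal{SL}_2(\mathbb{Z})^3$ action acts column-wise on each tensor leg, a triple $(A,B,C)$ sends $\mathbf{I}$ to
\[
(Ae_0)\otimes(Be_0)\otimes(Ce_0)+(Ae_1)\otimes(Be_1)\otimes(Ce_1).
\]
Thus $(A,B,C)$ stabilises $\mathbf{I}$ exactly when this sum of two rank-one tensors equals $e_0^{\otimes 3}+e_1^{\otimes 3}$. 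The inclusion $G\subseteq\textnormal{Stab}(\mathbf{I})$ is immediate: each element of $G$ has an even number of factors equal to $-I$, so it scales each rank-one summand by $+1$; for instance $(-I,-I,I)$ fixes both $e_0^{\otimes 3}$ and $e_1^{\otimes 3}$. The work is the reverse inclusion.

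First I would show that each of $A$, $B$, $C$ is diagonal or antidiagonal. Taking the two first-leg slices $S_0=\diag(1,0)$ and $S_1=\diag(0,1)$ of $\mathbf{I}$ gives $\det(xS_0+yS_1)=xy$. For the image tensor, with $v,v'$ the columns of $B$ and $w,w'$ the columns of $C$, the first-leg slices are
\[
xS_0+yS_1=(A_{00}x+A_{10}y)\,vw^{T}+(A_{01}x+A_{11}y)\,v'w'^{T}=[v\,|\,v']\diag(A_{00}x+A_{10}y,\,A_{01}x+A_{11}y)[w\,|\,w']^{T}.
\]
Taking determinants and using $\det B=\det C=1$ yields $\det(xS_0+yS_1)=(A_{00}x+A_{10}y)(A_{01}x+A_{11}y)$. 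Equating with $xy$ and invoking unique factorisation of a binary quadratic form into linear factors forces the multiset $\{A_{00}x+A_{10}y,\,A_{01}x+A_{11}y\}$ to equal $\{\lambda x,\lambda^{-1}y\}$; integrality of $A$ then gives $\lambda=\pm1$, so $A=\pm I$ (diagonal) or $A=\pm J$ (antidiagonal). Slicing along the other two axes gives the same for $B$ and $C$.

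Next I would run the finite analysis over $A,B,C\in\{I,-I,J,-J\}$. Each rank-one summand of the image must be aligned (all three legs proportional to one basis vector) to match the pure tensors $e_0^{\otimes 3},e_1^{\otimes 3}$; since a diagonal matrix preserves the direction of $e_0$ while an antidiagonal one swaps it, alignment forces $A,B,C$ to be simultaneously diagonal or simultaneously antidiagonal. In the all-diagonal case, writing $A=\epsilon_A I$, $B=\epsilon_B I$, $C=\epsilon_C I$, the image is $\epsilon_A\epsilon_B\epsilon_C\,\mathbf{I}$, so the condition is $\epsilon_A\epsilon_B\epsilon_C=1$, giving precisely the four elements of $G$. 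In the all-antidiagonal case, for $X\in\{J,-J\}$ the scalars by which $Xe_0$ and $Xe_1$ differ from $e_1$ and $e_0$ are negatives of one another, so the two summands pick up opposite total signs; the image is $\pm(e_0^{\otimes 3}-e_1^{\otimes 3})=\pm\mathbf{I}^\dagger\neq\mathbf{I}$, and no antidiagonal triple stabilises $\mathbf{I}$.

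The main obstacle is the identifiability step, namely forcing the columns of $A,B,C$ into the coordinate directions; the slice–determinant computation reduces this to the elementary factorisation of $xy$, which keeps the argument self-contained (one could instead cite uniqueness of the rank-two CP decomposition of $\mathbf{I}$). Once that is in place, the alignment observation and the sign bookkeeping are routine and pin down exactly $G$.
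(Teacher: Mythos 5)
Your proposal is correct, but it reaches the finite sign analysis by a genuinely different route from the paper. The paper rewrites the stabiliser equation as $(A,B,I)\mathbf{I}=(I,I,C^{-1})\mathbf{I}$ and compares the eight entries directly: the zero pattern forces $A$ and $B$ to be \emph{simultaneously} $\pm I$ or simultaneously $\pm J$, and in the antidiagonal case the nonzero entries force $C$ to be the swap matrix $\pm\left(\begin{smallmatrix}0&1\\1&0\end{smallmatrix}\right)$, of determinant $-1$, a contradiction; the diagonal case then yields exactly $G$. You instead decouple the three factors via the pencil of first-leg slices: the identity $\det B\det C\,(A_{00}x+A_{10}y)(A_{01}x+A_{11}y)=xy$ together with unique factorisation of binary forms constrains $A$ alone, and symmetrically $B$ and $C$. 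This is conceptually apt for this paper: $\det(xM_0+yM_1)$ is (up to sign conventions) one of Bhargava's three quadratic forms associated to a cube, so your argument in effect says that the stabiliser of $\mathbf{I}$ must act by automorphs of the form $xy$, a viewpoint that also bears on stabilisers of other nonsingular cubes (cf.\ the paper's remark after this theorem). The paper's trick is shorter and entirely explicit; yours is more portable and, as a bonus, can lean on uniqueness of the rank-two decomposition if one wants to avoid the slice computation.

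One small inaccuracy, harmless to the correctness: your slice-determinant step actually proves more than you claim. In the antidiagonal branch of the factorisation one gets $A_{00}=A_{11}=0$, $A_{10}=d$, $A_{01}=c$ with $cd=1$, hence $\det A=-cd=-1$, so no antidiagonal matrix in $\textnormal{SL}_2(\mathbb{Z})$ is compatible with the identity; in particular $\pm J$ gives $-xy$, not $xy$, and does not arise from the factorisation as you assert. The step therefore forces $A=\pm I$ (and likewise $B=\pm I$, $C=\pm I$) outright, making your all-antidiagonal case --- which you do analyse correctly, obtaining $\pm\mathbf{I}^\dagger$ --- redundant. Either delete that branch, or state the intermediate conclusion as the weaker ``diagonal or antidiagonal'' bound rather than as a consequence of the factorisation.
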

\begin{proof}
Clearly $G$ is contained in the stabilizer of $\mathbf{I}$. Conversely, let us consider the equation $(A,B,C)\mathbf{I}=\mathbf{I}$, where $A,B,C\in\text{SL}_2(\mathbb{Z})$. Then
$(A,B,I)\mathbf{I}=(I,I,C^{-1})\mathbf{I}$, which gives
\[
\mleft(\begin{matrix}A_{00}B_{00} & A_{00}B_{10} \\ A_{10}B_{00} & A_{10}B_{10} \end{matrix}\quad \middle\vert\quad \begin{matrix}A_{01}B_{01} & A_{01}B_{11} \\ A_{11}B_{01} & A_{11}B_{11}\end{matrix}\mright)\quad =\quad
\mleft(\begin{matrix}C_{11} & 0 \\  0& -C_{01} \end{matrix}\quad \middle\vert\quad \begin{matrix} -C_{10} &0\\ 0 & C_{00} \end{matrix}\mright).
\]
From the zero entries we see that either $A=\pm I$ and $B=\pm I$ or $A=\pm J$ and $B=\pm J$. In the first case we see from the nonzero entries that $(A,B,C)\in G$. In the second case we obtain
\[
C = \pm\begin{pmatrix}0 & 1 \\ 1 & 0\end{pmatrix},
\]
which is not in $\text{SL}_2(\mathbb{Z})$, so in fact this case cannot arise. Hence the stabilizer of $\mathbf{I}$ is $G$.
\end{proof}

Evidently $G$ is a subgroup of the stabilizer of any Bhargava cube. Some nonsingular cubes have larger stabilizers though. For example, given $A\in\text{SL}_2(\mathbb{Z})$, the stabilizer of the Bhargava cube $(A \mid I)$ contains the triple $(A,(A^{-1})^T,I)$ -- and plenty of other triples besides. A complete understanding of stabilizers in $\text{SL}_2(\mathbb{Z})^3$ can be derived from Bhargava's original work on the Gauss composition law for quadratic forms  \cite{Bh2004}.

\begin{proof}[Proof of Theorem~\ref{theoremC}]
From Theorem~\ref{theoremD} with $\mathbf{I}$ as our choice of Bhargava cube $A$, we see that, given paths $a_i/b_i$, $c_j/d_j$, and $e_k/f_k$ in $\mathscr{F}$, the formula $m_{ijk}=a_ic_je_k+b_id_jf_k$ defines a tame 1-hypertiling. Here $R=S=T=1$ and $\Det A=1$. Clearly this formula is invariant under the group $G\cong \mathbb{Z}_2\times\mathbb{Z}_2$ of Theorem~\ref{theorem62}. Let us now prove that the map is injective. Suppose then that $m_{ijk}=a_i'c_j'e_k'+b_i'd_j'f_k'$, where $a_i'/b_i'$, $c_j'/d_j'$, and $e_k'/f_k'$ are paths in $\mathscr{F}$. Then
\begin{align*}
&\mleft(\begin{pmatrix}a_{i-1} & b_{i-1}\\ a_{i} & b_{i}\end{pmatrix},\begin{pmatrix}c_{j-1} &d_{j-1} \\ c_{j} & d_{j}\end{pmatrix} ,\begin{pmatrix}e_{k-1} & f_{k-1}\\ e_{k} & f_{k}\end{pmatrix}\mright)\mathbf{I}\\
&=\mleft(\begin{pmatrix}a_{i-1}' & b_{i-1}'\\ a_{i}' & b_{i}'\end{pmatrix},\begin{pmatrix}c_{j-1}' & d_{j-1}'\\ c_{j}' & d_{j}'\end{pmatrix} ,\begin{pmatrix}e_{k-1}' & f_{k-1}'\\ e_{k}'& f_{k}'\end{pmatrix}\mright)\mathbf{I}.
\end{align*}
From Theorem~\ref{theorem62}, after applying an element of $G$ to one of the paths, we can ensure that $a_i/b_i=a_i'/b_i'$, $c_j/d_j=c_j'/d_j'$, and $e_k/f_k=e_k'/f_k'$, for $i\in\mathcal{I},j\in\mathcal{J},k\in\mathcal{K}$.

It remains to prove that every tame 1-hypertiling $\mathbf{M}$ arises in this way.  Theorem~\ref{theoremD} tells us that we can express $\mathbf{M}$ in terms of three paths in $\mathscr{F}$ and a Bhargava cube $A$. Then Theorem~\ref{theorem43} allows us to choose $A$ to be $\mathbf{I}$, as required.
\end{proof}

\section{Integer hypertilings with \texorpdfstring{$\text{SL}_2$}{} cross sections}\label{section55}

We finish with a discussion of the positive integer hypertiling $m_{ijk}=F_{2(i+j+k)-1}$ shown in Figure~\ref{figure8} (and conceived in \cite{DePlRuTu2018}), which offers an attractive illustration of topics covered so far. We also consider integer hypertilings with $\text{SL}_2$ cross sections in more generality, without the restriction that all entries are positive.

We recall that $(F_n)_{n\in\mathbb{Z}}$ is the sequence of Fibonacci numbers, where $F_0=0$, $F_1=1$, and $F_{n+1}=F_n+F_{n-1}$, for $n\in\mathbb{Z}$. Observe that, for $n$ odd, $F_{-n}=F_n$ and $F_{n+2}F_{n-2}-F_n^2=1$; the latter follows from a special case of Catalan's identity. 

\begin{lemma}\label{lemma44}
Let $X=\{(a,b)\in\mathbb{N}\times\mathbb{N}: ab \mid a^2+b^2+1\}$. Then 
\[
X=\{(F_{2n-1},F_{2n+1}):n\in\mathbb{Z}\}.
\]
\end{lemma}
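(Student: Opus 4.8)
The plan is to reduce the divisibility condition defining $X$ to the single Markov-type equation $a^2+b^2+1=3ab$, and then to identify the positive solutions of that equation with consecutive odd-indexed Fibonacci numbers by Vieta jumping. The first and most substantial step is to show that membership in $X$ forces the quotient $k=(a^2+b^2+1)/(ab)$ to equal $3$. To do this I would fix $k$ and, among all positive integer solutions of $a^2+b^2+1=kab$, choose one $(a_0,b_0)$ with $a_0\le b_0$ minimizing $a_0+b_0$. Viewing the equation as a quadratic in the larger variable, $b^2-(ka_0)b+(a_0^2+1)=0$, Vieta's formulas produce a second root $b_0'=ka_0-b_0=(a_0^2+1)/b_0$, which is again a positive integer, so $(a_0,b_0')$ is another solution. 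A short estimate shows that $b_0'<b_0$ whenever $a_0<b_0$, contradicting minimality; hence the minimal solution satisfies $a_0=b_0$, and then $(k-2)a_0^2=1$ forces $a_0=b_0=1$ and $k=3$. This establishes that $X=\{(a,b)\in\mathbb{N}^2:a^2+b^2+1=3ab\}$.

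Next I would verify the easy inclusion, that each pair $(F_{2n-1},F_{2n+1})$ lies in $X$. From the Fibonacci rule one has the odd-index recurrence $F_{2n+3}=3F_{2n+1}-F_{2n-1}$. Substituting this into the identity $F_{2n+3}F_{2n-1}-F_{2n+1}^2=1$ (the case $n\mapsto 2n+1$ of the relation $F_{n+2}F_{n-2}-F_n^2=1$ for odd $n$ quoted before the statement) gives $3F_{2n+1}F_{2n-1}-F_{2n-1}^2-F_{2n+1}^2=1$, that is, $F_{2n-1}^2+F_{2n+1}^2+1=3F_{2n-1}F_{2n+1}$. Thus $(F_{2n-1},F_{2n+1})\in X$ for every $n$, and since $F_{-n}=F_n$ for odd $n$, both orderings of each pair occur as $n$ ranges over $\mathbb{Z}$.

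For the reverse inclusion I would run the descent in the opposite direction on the equation $a^2+b^2+1=3ab$. Starting from any solution $(a,b)$ with $a\le b$, the jump $(a,b)\mapsto(b',a)$ with $b'=3a-b=(a^2+1)/b$ satisfies $b'\le a$ whenever $a<b$, so the larger entry strictly decreases and iteration must terminate at the base solution $(1,1)$. Reversing, every solution is reached from $(1,1)$ by the ascent $(a,b)\mapsto(b,3b-a)$, so the ordered solutions form the single chain generated by $x_{n+1}=3x_n-x_{n-1}$ with $x_0=x_1=1$. This is exactly the sequence $(F_{2n-1})$ of odd-indexed Fibonacci numbers, whose consecutive pairs are the $(F_{2n-1},F_{2n+1})$, which completes the identification of $X$.

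I expect the main obstacle to be the Vieta descent establishing $k=3$: one must handle the estimate giving $b_0'<b_0$ and the boundary case $a_0=b_0$ with care, and confirm at each step that the jumped value is a \emph{positive} integer so that the minimality argument genuinely applies. The remaining bookkeeping — matching the recurrence $x_{n+1}=3x_n-x_{n-1}$ and both orderings of the pairs to the Fibonacci indexing over all $n\in\mathbb{Z}$ — is routine once the descent and the easy inclusion are in place.
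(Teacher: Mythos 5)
Your proof is correct and follows essentially the same route as the paper, which sketches a descent via the map $\phi(x,y)=((x^2+1)/y,\,x)$ on $X$ terminating at $(1,1)$ --- your Vieta jump $(a,b)\mapsto((a^2+1)/b,\,a)$ is literally this $\phi$, and your ascent from $(1,1)$ via $x_{n+1}=3x_n-x_{n-1}$ recovers the odd-indexed Fibonacci chain exactly as intended. You supply the details the paper omits (the minimality estimate forcing $k=3$, integrality and positivity of the jumped root, and the bookkeeping of both orderings via $F_{-n}=F_n$ for odd $n$), all of which check out.
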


This lemma is well-known in mathematical olympiad communities. It can be proved by considering the invertible map $\phi\colon X\longrightarrow X$ defined by $\phi(x,y) = ((x^2+1)/y,x)$ and proving that if $(a,b)\in X$, then $\phi^n(a,b)=(1,1)$, for some integer $n$. We omit the details.

\begin{theorem}\label{theorem183}
All six 2-by-2 cross sections of a positive integer Bhargava cube $A$ have determinant 1 if and only if $A$ is equal to 
\[
A_n=\mleft(\begin{matrix}F_{2n-3} & F_{2n-1} \\ F_{2n-1} & F_{2n+1} \end{matrix}\quad \middle\vert\quad \begin{matrix}F_{2n-1} & F_{2n+1} \\ F_{2n+1} & F_{2n+3} \end{matrix}\mright),
\]
for some integer $n$. 
\end{theorem}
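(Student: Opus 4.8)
The plan is to prove both directions by elementary manipulation of the six face‑determinant equations, calling on the structural input (Lemma~\ref{lemma44}) only at the very end.

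For the ``if'' direction I would verify the six determinants directly. Writing $A_n$ as the Hankel‑type cube whose entry $m_{ijk}$ depends only on $i+j+k$ (namely $m_{ijk}=F_{2(n+i+j+k)-3}$), each of the six faces is, up to transposition, one of the two \emph{symmetric} matrices with rows $(F_{2n-3},F_{2n-1}),(F_{2n-1},F_{2n+1})$ or $(F_{2n-1},F_{2n+1}),(F_{2n+1},F_{2n+3})$. Their determinants are $F_{2n-3}F_{2n+1}-F_{2n-1}^2$ and $F_{2n-1}F_{2n+3}-F_{2n+1}^2$, both equal to $1$ by the stated special case of Catalan's identity $F_{m+2}F_{m-2}-F_m^2=1$, applied with the odd indices $m=2n-1$ and $m=2n+1$.

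The substance is the ``only if'' direction. I would label the eight entries $p,q,r,s,t,u,v,w$ and record the conditions $ps-qr=pv-rt=pu-qt=1$ (the faces through $m_{000}$) and $tw-uv=qw-su=rw-sv=1$ (the faces through $m_{111}$). The first and main goal is to force the cube to be Hankel, i.e. $q=r=t$ and $s=u=v$. Subtracting the equations in pairs yields the coupled identities $p(u-v)=t(q-r)$, $p(s-v)=r(q-t)$, $p(s-u)=q(r-t)$ from the $m_{000}$ faces and $w(q-r)=s(u-v)$, $w(q-t)=u(s-v)$, $w(r-t)=v(s-u)$ from the $m_{111}$ faces; since every entry is positive, each matched pair such as $(q-r)$ and $(u-v)$ vanishes simultaneously. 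Multiplying matching pairs gives three trichotomies, one per axis: $q=r$ (equivalently $u=v$) or $pw=ts$; $q=t$ (equivalently $s=v$) or $pw=ru$; and $r=t$ (equivalently $s=u$) or $pw=qv$.

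The main obstacle is ruling out the asymmetric branches, which I would dispatch by short positivity arguments. Since any two of $q=r,\ q=t,\ r=t$ force the third, exactly $0$, $1$, or $3$ of them can hold. If none holds, then $pw=ts=ru=qv=:P$; substituting $s=P/t$, $u=P/r$, $v=P/q$ into the three $m_{000}$ equations and setting $Q=p^2w$ gives $Q/t-qr=Q/r-qt=Q/q-rt=1$, whence subtraction forces $Q=qrt$ and then $Q/t-qr=0\neq1$, a contradiction. If exactly one holds, say $r=t$ (so also $s=u$) with $q\neq r$, the surviving branches give $pw=rs$; feeding $w=rs/p$ into $qw-s^2=1$ and using $ps-qr=1$ collapses it to $-s/p=1$, i.e. $s=-p<0$, again impossible. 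By the $S_3$‑symmetry of the hypothesis under permuting the three tensor factors, the remaining single‑equality cases are identical. Hence $q=r=t$ and $s=u=v$, so $A$ reduces to a Hankel cube determined by positive integers $a=m_{000}$, $b=q=r=t$, $c=s=u=v$, $d=m_{111}$ satisfying $ac-b^2=1$ and $bd-c^2=1$. To finish, I would close the loop with Lemma~\ref{lemma44}: from $ac=b^2+1$ any common factor of $b,c$ divides $1$, so $\gcd(b,c)=1$; together with $c\mid b^2+1$ and $b\mid c^2+1$ this gives $bc\mid b^2+c^2+1$, i.e. $(b,c)\in X$. Lemma~\ref{lemma44} then identifies $(b,c)=(F_{2m-1},F_{2m+1})$, and Catalan's identity yields $a=(b^2+1)/c=F_{2m-3}$ and $d=(c^2+1)/b=F_{2m+3}$, so $A=A_m$.
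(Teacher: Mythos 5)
Your proof is correct, and while it shares the paper's overall architecture---verify $A_n$ directly via Catalan's identity, force the cube into symmetric (Hankel) form, then finish with Lemma~\ref{lemma44}---the central reduction step is carried out by a genuinely different mechanism. The paper labels the entries $a,\dots,h$ and uses two telescoping identities, $(a+d)(cf-bg)=(af-be)c-(ag-ce)b-(bh-fd)c+(ch-dg)b=c-b-c+b=0$ and likewise $(a+f)(bg-de)=0$, followed by multiplicative substitutions such as $c=c(af-be)=b(ag-ce)=b$, reaching $b=c=e$ and $d=f=g$ in a few lines, with positivity entering only through $a+d>0$ and $a+f>0$. You instead subtract the face equations pairwise to get six coupled proportionalities, extract one dichotomy per axis (an equality such as $q=r$, or a product relation such as $pw=ts$), and kill the $0$- and $1$-equality branches by explicit contradictions ($Q=qrt$ forcing $0=1$, and $s=-p<0$); I checked these computations and the $S_3$ reduction to the representative case $r=t$, and they are all sound. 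Your longer route buys something the paper's slicker identities do not make visible: it localises exactly where positivity is indispensable, and your excluded branch ($r=t$, $q\neq r$, $pw=rs$, $s=-p$) is precisely the configuration realised by the mixed-sign cubes of Section~\ref{section55}---for $(J,J,I)A_n$ one has $p=F_{2n+1}$, $r=t=-F_{2n-1}$, $s=u=-F_{2n+1}=-p$---so your case analysis in effect re-derives why Theorem~\ref{theorem18} needs the larger group $H$. You also spell out a step the paper leaves implicit when invoking Lemma~\ref{lemma44}, namely that $ac-b^2=1$ gives $\gcd(b,c)=1$, whence $c\mid b^2+1$ and $b\mid c^2+1$ combine to $bc\mid b^2+c^2+1$.
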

\begin{proof}
It is straightforward to check that every cross section of $A_n$ has determinant~1. For the converse, suppose that the positive integer Bhargava cube
\[
A=\mleft(\begin{matrix}a & b\\ c & d\end{matrix}\ \middle\vert\ \begin{matrix}e & f\\ g & h\end{matrix}\mright)
\]
has the property that every cross section has determinant 1. Observe that
\[
\hspace*{-1.4cm}(a+d)(cf-bg) = (af-be)c-(ag-ce)b-(bh-fd)c+(ch-dg)b=c-b-c+b=0.
\]
Hence $cf=bg$. Therefore \(c = c(af-be)=b(ag-ce)=b\) and $f=g$. Similarly
\[
\hspace*{-1.4cm}(a+f)(bg-de)=(ag-ce)b-(ad-bc)e -(eh-fg)b+(bh-fd)e=b-e-b+e=0. 
\]
Hence $bg=de$. Therefore $e=e(bh-fd)=b(eh-fg)=b$ and $d=g$. We then have $ad-b^2=1$ and $bh-d^2=1$. Consequently, $d\mid b^2+1$ and $b \mid d^2+1$, so we can apply Lemma~\ref{lemma44} to see that $(b,d)=(F_{2n-1},F_{2n+1})$, for some integer $n$. Then $a=(1+b^2)/d=F_{2n-3}$ and $h=(1+d^2)/b=F_{2n+3}$, so $A$ is of the required form.
\end{proof}

It follows immediately from Theorem~\ref{theorem183} that all positive integer hypertilings with $\text{SL}_2$ cross sections have the form $m_{ijk}=F_{2(e+i+j+k)}-1$, for some integer $e$. These are 5-hypertilings, because $\Det A_n=5$.

All the cubes $A_n$ are $\text{SL}_2(\mathbb{Z})^3$ equivalent. Specifically, let 
\[
P=\begin{pmatrix}0& 1\\ -1 & 3\end{pmatrix};
\]
then one can check that $(P^p,P^q,P^r)A_n = A_{n+p+q+r}$. Of course, the property of having $\text{SL}_2$ cross sections is \emph{not} preserved by the action of $\text{SL}_2(\mathbb{Z})^3$, so there are plenty of Bhargava cubes in the $\text{SL}_2(\mathbb{Z})^3$ orbit of $A_n$ without this property.

The 5-hypertiling $m_{ijk}=F_{2(i+j+k)-1}$ can be obtained from three copies of the same path $F_{2n-1}/F_{2n+1}$ in $\mathscr{F}$ (with itinerary $\dotsc,3,3,3,\dotsc$) and the Bhargava cube
\[
\mleft(\begin{matrix*}[r]3& \!\!\!-1\\ \!-1 & 0\end{matrix*}\ \middle\vert\ \begin{matrix*}[r]\!-1 & 0\\ 0 & 1\end{matrix*}\mright).
\]
One can verify this from the Fibonacci identity
\begin{align*}
F_{2(i+j+k)-1}&=3F_{2i-1}F_{2j-1}F_{2k-1}-F_{2i-1}F_{2j-1}F_{2k+1}-F_{2i-1}F_{2j+1}F_{2k-1}\\
&\hspace*{5mm}-F_{2i+1}F_{2j-1}F_{2k-1}+F_{2i+1}F_{2j+1}F_{2k+1},
\end{align*}
which can be checked using Binet's formula.

Let us now turn to more general (not necessarily positive) integer hypertilings with $\text{SL}_2$ cross sections. We define $G$ to be the subgroup of $\text{SL}_2(\mathbb{Z})^3$ generated by $(J,I,I)$, $(I,J,I)$, and $(I,I,J)$, which has order 64. The kernel $K$ of the action of this group on Bhargava cubes is the subgroup $\{(I,I,I),(I,-I,-I),(-I,I,-I),(-I,-I,I)\}$ of order 4, so $G/K$ has order 16. One can check that this action preserves the property of $\text{SL}_2$ cross sections. Hence each of the 16 images of $A_n$ under $G$ is a Bhargava cube with $\text{SL}_2$ cross sections; for example, 
\[
(J,J,I)A_n = \mleft(\begin{matrix} F_{2n+1} & -F_{2n-1}\\ -F_{2n-1} & F_{2n-3} \end{matrix}\quad \middle\vert\quad \begin{matrix}F_{2n+3} & -F_{2n+1} \\-F_{2n+1} & F_{2n-1}  \end{matrix}\mright).
\]
To grasp this orbit of 16 cubes, it helps to observe that, within the orbit, the entry $\pm F_{2n-3}$ can appear in each of the 8 corners of the cube exactly twice, once in positive form and once in negative form.

These are not quite all Bhargava cubes with $\text{SL}_2$ cross sections because one can also carry out the operation $m_{ijk}\longmapsto (-1)^{i+j+k}m_{ijk}$, which preserves the $\text{SL}_2$ cross section property. This corresponds to acting on cubes by the triple $(Q,Q,Q)\in\text{GL}_2(\mathbb{Z})^3$, where
\[
Q=\begin{pmatrix*}[r]1 & 0 \\ 0 & \!\!\!-1\end{pmatrix*}.
\]
Let us define $H$ to be the subgroup of $\text{GL}_2(\mathbb{Z})^3$ generated by $(Q,Q,Q)$ and $G$.

\begin{theorem}\label{theorem18}
All six 2-by-2 cross sections of a Bhargava cube $A$ have determinant 1 if and only if $A\in H(A_n)$, for some integer $n$.
\end{theorem}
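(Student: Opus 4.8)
The plan is to prove both implications, writing $\mathcal S$ for the set of Bhargava cubes all six of whose $2$-by-$2$ cross sections have determinant~$1$. I would treat the inclusion $H(A_n)\subseteq\mathcal S$ as routine and concentrate on the reverse inclusion, which I will reduce to the positive classification of Theorem~\ref{theorem183}.

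For the forward direction, Theorem~\ref{theorem183} already records that every $A_n\in\mathcal S$, so it suffices to check that each generator of $H$ preserves $\mathcal S$. For a generator $(J,I,I)$ the two cross sections perpendicular to the first axis are interchanged and one is negated, so their determinants are unchanged, while each of the other four cross sections is replaced by its product with $J$ on the appropriate side and hence has its determinant multiplied by $\det J=1$; the same applies to $(I,J,I)$ and $(I,I,J)$. The generator $(Q,Q,Q)$ sends $m_{ijk}\mapsto(-1)^{i+j+k}m_{ijk}$, so every cross section $M$ is replaced by $\pm QMQ$, and $\det(\pm QMQ)=\det M$. Thus $H$ preserves $\mathcal S$ and $H(A_n)\subseteq\mathcal S$.

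For the reverse direction, write $A=\left(\begin{smallmatrix}a&b\\c&d\end{smallmatrix}\;\middle\vert\;\begin{smallmatrix}e&f\\g&h\end{smallmatrix}\right)$, so that membership in $\mathcal S$ is the locus $ad-bc=eh-fg=af-be=ch-dg=ag-ce=bh-df=1$. Exactly the computation in the proof of Theorem~\ref{theorem183}, which never used positivity, yields the universal identities $(a+d)(cf-bg)=0$ and $(a+f)(bg-de)=0$ and their companions under permuting the three axes. I would first treat the generic branch, in which all the relevant sums are nonzero, so that $cf=bg$, $bg=de$, and their analogues hold. Then the chain $c=c(af-be)=a(cf)-bce=a(bg)-bce=b(ag-ce)=b$ gives $c=b$, and likewise $f=g$, $e=b$, $d=g$ (the cases where an entry vanishes being easy), so $A$ is symmetric with $ad-b^2=1$ and $bh-d^2=1$. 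These force $\gcd(b,d)=1$, $d\mid b^2+1$, and $b\mid d^2+1$, whence $|b||d|\mid|b|^2+|d|^2+1$; Lemma~\ref{lemma44} applied to $(|b|,|d|)$ gives $\{|b|,|d|\}=\{F_{2m-1},F_{2m+1}\}$ for some integer $m$. It remains to remove signs: since $a=(b^2+1)/d$ and $h=(d^2+1)/b$, the sign of $m_{ijk}$ equals $\operatorname{sign}d$ when $i+j+k$ is even and $\operatorname{sign}b$ when $i+j+k$ is odd. This is precisely one of the four sign patterns realised by the position-preserving elements $(I,I,I)$, the negation $(-I,I,I)$, the parity flip $(Q,Q,Q)$, and their product $(-I,I,I)(Q,Q,Q)$; applying the appropriate one turns $A$ into a positive cube in $\mathcal S$, which by Theorem~\ref{theorem183} equals some $A_{m'}$, so that $A\in H(A_{m'})$.

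The hard part is the degenerate branch, where one of the sums, say $a+d$, vanishes; this genuinely occurs inside $\mathcal S$ — for instance $(J,I,I)A_1$ has $m_{000}+m_{110}=0$ — so it cannot simply be excluded. My plan here is to show that every $H$-orbit meeting $\mathcal S$ already contains a positive cube, after which Theorem~\ref{theorem183} again finishes the argument. Because $H$ acts on cubes through a finite group, one may choose in the orbit a representative $A^\ast$ maximising the entry-sum $\sum_{i,j,k}m_{ijk}$; the availability of $(-I,I,I)$ forces this sum to be nonnegative, and the claim to be established is that at such a maximiser no entry can be nonpositive, since otherwise one of the layer-swaps $(J,I,I)$, $(I,J,I)$, $(I,I,J)$, possibly composed with the parity flip, strictly increases the entry-sum. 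Proving this monotonicity statement over the finite orbit — essentially a reflection-group style positivity argument — is the main obstacle; once it is in hand, $A^\ast$ is a positive cube in $\mathcal S$, hence equals some $A_n$, and therefore $A\in H(A_n)$ as required.
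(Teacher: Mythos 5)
Your forward inclusion and your generic branch are correct. The generator check for $H$ is routine, and you are right that the determinant manipulations from the proof of Theorem~\ref{theorem183} use positivity only to divide by the sums $a+d$ and $a+f$; once those are nonzero, your reduction to a symmetric cube with $ad-b^2=bh-d^2=1$, the application of Lemma~\ref{lemma44} to $(|b|,|d|)$, and the four-sign-pattern analysis via $(-I,I,I)=(J,I,I)^2$ and $(Q,Q,Q)$ all go through. (Your aside that ``the cases where an entry vanishes are easy'' can be upgraded: no entry of a cube in your set $\mathcal{S}$ can be zero at all, since $m_{000}=0$ forces $bc=be=ce=-1$, whence $c=e$ and $c^2=-1$; the paper records exactly this fact as its first step.)

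The degenerate branch, however, contains a genuine gap, which you yourself flag. The claim that an entry-sum maximiser $A^\ast$ in the $H$-orbit has all entries positive is left unproven, and your proposed one-step justification does not hold up: applying $(J,I,I)$ changes the entry-sum by $-2\Sigma$, where $\Sigma$ is the sum of the four entries of one first-axis layer, and similarly for the other generators and their compositions with $(Q,Q,Q)$. Maximality therefore only delivers nonnegativity of the various (possibly checkerboard-twisted) layer sums, and a single nonpositive entry does not force any of these sums to be negative, so no available move need ``strictly increase the entry-sum.'' Extracting entrywise positivity from those linear inequalities would still require the six determinant constraints and substantial further work. Note too that the degenerate branch is where most of the orbit lives --- your own example $(J,I,I)A_1$, with $m_{000}+m_{110}=0$, is typical rather than exceptional --- so this is the crux of the theorem, not a corner case. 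The paper instead normalises directly: having shown no entry is zero, it chooses $g\in H$ so that $g(A)$ has an all-positive top layer while the bottom layer is not all negative, and a short algebraic check with the determinant conditions then forces the bottom layer to be all positive, so $g(A)=A_n$ by Theorem~\ref{theorem183}. Replacing your maximiser step with a sign-normalisation of this kind --- the sixteen classes of $G/K$ together with $(Q,Q,Q)$ realise enough sign patterns to fix the top layer --- would close the gap.
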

\begin{proof}
We have noted already that all cross sections of elements of $H(A_n)$ have determinant 1. Conversely, suppose that $A$ is a Bhargava cube with the property that every cross section has determinant 1. It is straightforward to verify that no entry of $A$ can be 0. One can check (details omitted) that there is an element $g$ of $H$ for which the image cube $A'=g(A)$ has the property that the top layer of $A'$ comprises positive entries and the bottom layer does not comprise all negative entries. A quick algebraic check shows that this is only possible if in fact the bottom layer comprises positive entries also, so $A'=A_n$ for some integer $n$, by Theorem~\ref{theorem183}.
\end{proof}

We see from Theorem~\ref{theorem18} that the full collection of Bhargava cubes with $\text{SL}_2$ cross sections belong to exactly two $\text{SL}_2(\mathbb{Z})^3$ orbits (and just one $\text{GL}_2(\mathbb{Z})^3$ orbit).  Any integer hypertiling with $\text{SL}_2$ cross sections is formed by piecing together Bhargava cubes of the form $H(A_n)$. 


\begin{bibdiv}
\begin{biblist}
\bib{AsReSm2010}{article}{
   author={Assem, Ibrahim},
   author={Reutenauer, Christophe},
   author={Smith, David},
   title={\href{https://doi.org/10.1016/j.aim.2010.05.019}{Friezes}},
   journal={Adv. Math.},
   volume={225},
   date={2010},
   number={6},
   pages={3134--3165},
}
\bib{BaPaTs2016}{article}{
   author={Baur, Karin},
   author={Parsons, Mark J.},
   author={Tschabold, Manuela},
   title={\href{https://doi.org/10.1016/j.ejc.2015.12.015}{Infinite friezes}},
   journal={European J. Combin.},
   volume={54},
   date={2016},
   pages={220--237},
}
\bib{BeRe2010}{article}{
   author={Bergeron, Fran\c cois},
   author={Reutenauer, Christophe},
   title={\href{https://doi.org/10.1215/ijm/1299679749}{$SL_k$-tilings of the plane}},
   journal={Illinois J. Math.},
   volume={54},
   date={2010},
   number={1},
   pages={263--300},
}
\bib{BeHoJo2017}{article}{
   author={Bessenrodt, Christine},
   author={Holm, Thorsten},
   author={J\o rgensen, Peter},
   title={\href{https://doi.org/10.1016/j.aim.2017.05.019}{All ${\rm SL}_2$-tilings come from infinite triangulations}},
   journal={Adv. Math.},
   volume={315},
   date={2017},
   pages={194--245},
}
\bib{Bh2004}{article}{
   author={Bhargava, Manjul},
   title={\href{https://doi.org/10.4007/annals.2004.159.217}{Higher composition laws I. A new view on Gauss composition, and quadratic generalizations}},
   journal={Ann. of Math.},
   volume={159},
   date={2004},
   number={1},
   pages={217--250},
}
\bib{CaCh2006}{article}{
   author={Caldero, Philippe},
   author={Chapoton, Fr\'ed\'eric},
   title={\href{https://doi.org/10.4171/cmh/65}{Cluster algebras as Hall algebras of quiver representations}},
   journal={Comment. Math. Helv.},
   volume={81},
   date={2006},
   number={3},
   pages={595--616},
}
\bib{CoCo1973}{article}{
   author={Conway, J. H.},
   author={Coxeter, H. S. M.},
   title={\href{https://doi.org/10.2307/3615344}{Triangulated polygons and friezes}},
   journal={Math. Gaz.},
   volume={57},
   date={1973},
   pages={87--94, 175--183},
}
\bib{Co1971}{article}{
   author={Coxeter, H. S. M.},
   title={\href{https://doi.org/10.4064/aa-18-1-297-310}{Frieze~patterns}},
   journal={Acta Arith.},
   volume={18},
   date={1971},
   pages={297--310},
}
\bib{CuHoJo2020}{article}{
   author={Cuntz, Michael},
   author={Holm, Thorsten},
   author={J\o rgensen, Peter},
   title={\href{https://doi.org/10.1017/fms.2020.13}{Frieze patterns with coefficients}},
   journal={Forum Math. Sigma},
   volume={8},
   date={2020},
   pages={Paper No. e17},
}
\bib{DePlRuTu2018}{article}{
   author={Demonet, Laurent},
   author={Plamondon, Pierre-Guy},
   author={Rupel, Dylan},
   author={Stella, Salvatore},
   author={Tumarkin, Pavel},
   title={\href{https://www.mat.univie.ac.at/~slc/wpapers/s76stella.pdf}{$\rm SL_2$-tilings do not exist in higher dimensions (mostly)}},
   journal={S\'em. Lothar. Combin.},
   volume={76},
   date={2018},
   pages={Art. B76e},
}
\bib{FeKaSeTu2023}{article}{
   author={Felikson, Anna},
   author={Karpenkov, Oleg},
   author={Serhiyenko, Khrystyna},
   author={Tumarkin, Pavel},
   title={\href{https://doi.org/10.1007/s10711-025-00997-5}{3d Farey graph, lambda lengths and $SL_2$-tilings}},
   journal={Geom. Dedicata},
   volume={219},
   date={2025},
   number={2},
   pages={Paper No. 33},
}
\bib{FoZe2002}{article}{
   author={Fomin, Sergey},
   author={Zelevinsky, Andrei},
   title={\href{https://doi.org/10.1090/S0894-0347-01-00385-X}{Cluster algebras I. Foundations}},
   journal={J. Amer. Math. Soc.},
   volume={15},
   date={2002},
   number={2},
   pages={497--529},
}
\bib{FoZe2007}{article}{
   author={Fomin, Sergey},
   author={Zelevinsky, Andrei},
   title={\href{https://doi.org/10.1112/S0010437X06002521}{Cluster algebras IV. Coefficients}},
   journal={Compositio Math.},
   volume={143},
   date={2007},
   pages={112--164},
}
\bib{book:hyperdet}{book}{
author = {Gelfand, Israel M.},
author = {Kapranov, Mikhail M.},
author = {Zelevinsky, Andrei V.},
title = {Discriminants, Resultants, and Multidimensional Determinants},
publisher = {Birkh\"{a}user Basel},
date = {1994},
}
\bib{LeSc2015}{article}{
   author={Lee, Kyungyong},
   author={Schiffler, Ralf},
   title={\href{https://doi.org/10.4007/annals.2015.182.1.2}{Positivity for cluster algebras}},
   journal={Ann. of Math.},
   volume={182},
   date={2015},
   number={1},
   pages={73--125},
}
\bib{MoOvTa2015}{article}{
   author={Morier-Genoud, Sophie},
   author={Ovsienko, Valentin},
   author={Tabachnikov, Serge},
   title={\href{https://doi.org/10.4171/LEM/61-1/2-4}{${\rm SL}_2(\mathbb{Z})$-tilings of the torus, Coxeter-Conway friezes
   and Farey triangulations}},
   journal={Enseign. Math.},
   volume={61},
   date={2015},
   number={1-2},
   pages={71--92},
}
\bib{Pe1987}{article}{
   author={Penner, R. C.},
   title={\href{https://doi.org/10.1007/BF01223515}{The decorated Teichm\"uller space of punctured surfaces}},
   journal={Comm. Math. Phys.},
   volume={113},
   date={1987},
   number={2},
   pages={299--339},
}
\bib{Sh2023}{article}{
   author={Short, Ian},
   title={\href{https://doi.org/10.1090/tran/8296}{Classifying ${\rm SL}_2$-tilings}},
   journal={Trans. Amer. Math. Soc.},
   volume={376},
   date={2023},
   number={1},
   pages={1--38},
}
\bib{ShVaZa2025}{article}{
   author={Short, Ian},
   author={van Son, Matty},
   author={Zabolotskii, Andrei},
   title={\href{https://doi.org/10.1016/j.aim.2025.110269}{Frieze~patterns and Farey complexes}},
   journal={Adv. Math.},
   volume={472},
   date={2025},
   pages={Paper No. 110269},
}
\bib{Sp2017}{article}{
   author={Springborn, Boris},
   title={\href{https://doi.org/10.4171/LEM/63-3/4-5}{The hyperbolic geometry of Markov's theorem on Diophantine approximation and quadratic forms}},
   journal={Enseign. Math.},
   volume={63},
   date={2017},
   number={3-4},
   pages={333--373},
}
\end{biblist}
\end{bibdiv}
\end{document}